\newlength{\bibitemsep}\setlength{\bibitemsep}{.10\baselineskip plus .05\baselineskip minus .05\baselineskip}
\newlength{\bibparskip}\setlength{\bibparskip}{0pt}
\let\oldthebibliography\thebibliography
\renewcommand\thebibliography[1]{%
 \oldthebibliography{#1}%
 \setlength{\parskip}{\bibitemsep}%
 \setlength{\itemsep}{\bibparskip}%
}
\newtheorem{theorem}{Theorem}[section]
\newtheorem{assumption}[theorem]{Assumption}
\newtheorem{definition}[theorem]{Definition}
\newtheorem{proposition}[theorem]{Proposition}
\newtheorem{corollary}[theorem]{Corollary}
\newtheorem{lemma}[theorem]{Lemma}
\newtheorem{remark}[theorem]{Remark}
\newtheorem{example}[theorem]{Example}
\newtheorem{examples}[theorem]{Examples}
\newtheorem{foo}[theorem]{Remarks}
\newtheorem{conjecture}[theorem]{Conjecture}
\newtheorem{open question}[theorem]{Open Question}
\newtheorem{c/p}[theorem]{Conjecture/Proposition}
\newcommand{\brak}[1]{\left(#1\right)} 
\def\vint{\mathop{\mathchoice%
 {\setbox0\hbox{$\displaystyle\intop$}\kern 0.22\wd0%
 \vcenter{\hrule width 0.6\wd0}\kern -0.82\wd0}%
 {\setbox0\hbox{$\textstyle\intop$}\kern 0.2\wd0%
 \vcenter{\hrule width 0.6\wd0}\kern -0.8\wd0}%
 {\setbox0\hbox{$\scriptstyle\intop$}\kern 0.2\wd0%
 \vcenter{\hrule width 0.6\wd0}\kern -0.8\wd0}%
 {\setbox0\hbox{$\scriptscriptstyle\intop$}\kern 0.2\wd0%
 \vcenter{\hrule width 0.6\wd0}\kern -0.8\wd0}}%
 \mathopen{}\int}
\newcommand{\DF}{\mathcal{E}}
\newcommand{\B}{\mathbf B}
\newcommand{\ve}{\varepsilon}
\title{Besov class via heat semigroup on Dirichlet spaces III: BV
 functions and sub-{G}aussian heat kernel estimates. }
\author{Patricia Alonso-Ruiz, Fabrice Baudoin, Li Chen, Luke Rogers, Nageswari Shanmugalingam, Alexander Teplyaev}
\date{\today}
\begin{document}

\maketitle

\begin{abstract}

With a view toward fractal spaces, by using a Korevaar-Schoen space approach, we introduce the class of bounded variation (BV) functions in the general framework of strongly local Dirichlet spaces with a heat kernel satisfying sub-Gaussian estimates. Under a weak Bakry-\'Emery curvature type condition, which is new in this setting, this BV class is identified with a heat semigroup based Besov class. As a consequence of this identification, properties of BV functions and associated BV measures are studied in detail. In particular, we prove co-area formulas, global $L^1$ Sobolev embeddings and isoperimetric inequalities. It is shown that for nested fractals or their direct products the BV class we define is dense in $L^1$. The examples of the unbounded Vicsek set, unbounded Sierpinski gasket and unbounded Sierpinski carpet are discussed.

\end{abstract}

\tableofcontents

\section{Introduction}


In this paper we introduce and study functions of bounded variation 
on strongly local Dirichlet spaces 
which \textit{may not have Gaussian heat kernel bounds, but have sub-Gaussian heat kernel bounds} 
as given in~\eqref{eq:subGauss-upper} and satisfy a weak Bakry-\'Emery curvature condition~\eqref{WBECDintro}. 
We note that some properties that are usually taken for granted may not hold true in this setting because energy measures are not 
necessarily absolutely continuous with respect to a fixed measure
$\mu$. Therefore, unlike our analysis in \cite{ABCRST2} we can not develop and use locally Lipschitz functions, and need to develop a different set of tools.  

This introduction is devoted to giving an overview of the content of the paper and to providing a summary of the main results obtained.
The precise description of the sub-Gaussian heat kernel bounds and the definition of the Besov spaces that are investigated are presented in Section~\ref{S:Preliminaries}. Section~\ref{Section wBE} deals with a weak Bakry-\'Emery type curvature condition that is key in studying the notion of BV class introduced in Section~\ref{section BV}. Eventually, Section~\ref{S:Examples} presents several examples of spaces where the theory developed in previous sections applies.

\subsection*{An approach to BV functions in metric measure spaces}

Our approach to a theory of functions of bounded variation (BV)  is based on the study of  the $L^1$ Korevaar-Schoen class at the critical exponent. To explain our motivation and results, let us present this approach in the  general context of metric measure spaces.

Let $(X,d,\mu)$ be a locally compact complete metric measure space where $\mu$ is a Radon measure. For $\lambda >0$ and $p \ge 1$, we define the space $KS^{\lambda,p}(X)$ as the collection of all functions $f\in L^p(X,\mu)$ for which 
\[
\Vert f\Vert_{KS^{\lambda,p}(X)}^p:=\limsup_{r\to 0^+}\int_X\int_{B(x,r)}\frac{|f(y)-f(x)|^p}{r^{\lambda p} \mu(B(x,r))}\, d\mu(y)\, d\mu(x)<+\infty.
\]
The $L^p$--Korevaar-Schoen critical exponent of the space is then defined as
\begin{equation*}\label{e:alpha-critical}
\lambda_p^\#:=\sup \{ \lambda >0\, :\, KS^{\lambda,p}(X) \text{ contains non-constant functions}\}.
\end{equation*}
In the context of a complete metric measure space $(X,d,\mu)$  supporting a $1$-Poincar\'e inequality and where 
$\mu$ is doubling, one has $\lambda_p^\#=1$ for every $p \ge 1$. Note that, at the critical exponent $\lambda_2^\#=1$,  one can construct a Dirichlet form 
\[
\mathcal{E}(f,f) \simeq \Vert f\Vert_{KS^{1,2}(X)}^2
\]
with domain $KS^{1,2}(X)$ by using a choice of a Cheeger differential structure as in~\cite{Chee99}. This Dirichlet form is then strictly local and the intrinsic distance  $d_\mathcal{E}$ associated to $\mathcal{E}$ is bi-Lipschitz equivalent to the original metric $d$. We refer to \cite{MMS} and the references therein for further details. In that same framework, at the critical exponent $\lambda_1^\#=1$, one has $KS^{1,1}(X)=BV(X)$ and 
 \[
 \mathbf{Var} (f) \simeq \Vert f\Vert_{KS^{1,1}(X)}.
 \]

By contrast, in the context of the present paper, $(X,d,\mu)$ is a complete metric measure space for which $\mu$ is doubling (even Ahlfors regular) but for which $\lambda_2^\#=\frac{d_W}{2}$, where $d_W \ge 2$ is a parameter called the walk dimension of the space, see \cite{Gri,GrigLiu15}. At the critical exponent $\lambda_2^\#=\frac{d_W}{2}$ one has a strongly local (but not strictly local) Dirichlet form 
\[
\mathcal{E}(f,f) \simeq \Vert f\Vert_{KS^{\frac{d_W}{2},2}(X)}^2
\]
 with domain $KS^{\frac{d_W}{2},2}(X)$ whose heat kernel satisfies the sub-Gaussian estimates \eqref{eq:subGauss-upper}; see \cite[Corollary 3.4]{HZ} and 
Section~\ref{section BV},  
Remark \ref{Remark KS Dirichlet}. 
By analogy with the previous case, it seems then natural to study the corresponding $L^1$ critical exponent $\lambda_1^\#$ and the associated class $KS^{\lambda_1^\#,1}(X)$.  For the spaces we are interested in, which are primarily fractals or products of fractals, we shall see that this class $KS^{\lambda_1^\#,1}(X)$ has many of the expected properties of a BV class: it has co-area formulas
(see Theorem~\ref{coarea formula 1a}), existence of BV measures (see Theorem~\ref{equivalence BV}), and Sobolev embeddings
(see Theorem~\ref{Sobolev global}). The key assumption that yields these properties is the weak Bakry-\'Emery condition~\eqref{WBECDintro}.
 
For nested fractals~\cite{Sab99,Lind,KigB,BP,Ba98}, we prove that $\lambda_1^\#=d_H$ is the Hausdorff dimension of the space and that $KS^{\lambda_1^\#,1}(X)$ is dense in $L^1(X,\mu)$, see Theorem \ref{T:nested}. For the Sierpinski carpet~\cite{BP,Ba98,BB89,BB99} we prove that 
\begin{equation}\label{e:alpha-SC intro}
\lambda_{1}^\#\geqslant d_H-d_{tH}+1
\end{equation}
and conjecture that in fact there is an equality  in \eqref{e:alpha-SC intro}. Here, $d_{tH}$ is the topological-Hausdorff dimension defined in \cite{BBE15}.  
 
A key point in the study of the  $L^1$ Korevaar-Schoen classes $KS^{\lambda,1}(X)$ is Proposition \ref{KS-BEsov} which allows us to identify $KS^{\lambda,1}(X)$ with the heat semigroup based Besov class $\B^{1,\alpha}(X)$, $\alpha =\frac{\lambda}{d_W}$, that was introduced and extensively studied in our previous papers \cite{ABCRST1, ABCRST2}. In particular, we note that $\alpha_1^\#:=\frac{\lambda_1^\#}{d_W}$ is the critical parameter in the Besov scale of the classes $\B^{1,\alpha}(X)$, that is, for $\alpha$ 
larger than this threshold, the corresponding Besov classes only contain constant functions.  
Working directly with the Besov classes $\B^{1,\alpha}(X)$ has the advantage of setting us in the framework of~\cite{ABCRST1}, which allows to use a wide range of heat semigroup techniques paralleling  the methods developed in \cite{ABCRST2}. For this reason, most of our results are written for the Besov class $\B^{1,\alpha}(X)$ and the corresponding critical exponent $\alpha_1^\#$ rather than in terms of  $KS^{\lambda,1}(X)$ and $\lambda_1^\#$.
 
 \subsection*{Weak Bakry-\'Emery nonnegative curvature condition}
The main tool in this paper is the heat semigroup. In the Euclidean case, the deep connection between regularizing properties of the heat semigroup and  the theory of BV functions and sets of finite perimeter was uncovered by E. De Giorgi in the celebrated paper \cite{DeGiorgi54}. Among many other works, this connection was further developed and investigated by M. Ledoux in  \cite{Ledoux} (see also the references therein). The Bakry-\'Emery calculus shows that regularizing properties of the heat semigroup are intimately connected with Ricci curvature-type lower bounds on the underlying space, see \cite{BGL}. Thus, it should come as no surprise that the approaches of De Giorgi and Ledoux and the notions of isoperimetric inequalities and BV functions may be generalized to large classes of spaces for which Ricci curvature type lower bounds are well understood, like the now-extensively studied RCD$(0,\infty)$ spaces (see \cite{BGS14,CaMo18}) or sub-Riemannian spaces (see \cite{BK14,BB2}). In the context of the present paper, although the Bakry-\'Emery calculus is not available, the weak Bakry-\'Emery curvature condition introduced in \cite{ABCRST2} has a natural H\"older analogue which is the key assumption of our work. We shall say that the weak Bakry-\'Emery non-negative curvature condition $wBE(\kappa)$ is satisfied if there exist a constant $C>0$ and a parameter $0 < \kappa < d_W$ such that for every $t >0$, $g \in L^\infty(X,\mu)$ and  $x,y \in X$,
\begin{align}\label{WBECDintro}
| P_t g (x)-P_tg(y)| \le C \frac{d(x,y)^\kappa}{t^{\kappa/d_W}} \| g \|_{ L^\infty(X,\mu)}.
\end{align}

We prove in Theorem \ref{thm-xBE-FD} that fractional metric spaces for which $d_W>d_H\geq1$ 
must satisfy $wBE(\kappa)$ with $\kappa =d_W-d_H$.
Note that the hypothesis of Theorem~\ref{thm-xBE-FD} is  
stable under rough isometries in the sense of Barlow-Bass-Kumagai  \cite{B-JEMS,BBK06,BB04} and it is part of the general theory of fractional diffusions, see 
\cite{BP,Ba03,Ba98,Gri,KigB,Kig:RFQS,BaASC,BB89,BB99,BBKT} and references therein. 
In particular for nested fractals, Theorem~\ref{thm-xBE-FD} yields that $wBE(\kappa)$ is satisfied with $\kappa =d_W-d_H$ and in that case 
the value $d_W-d_H$ is optimal in the sense that $wBE(\kappa)$ is not satisfied for $\kappa >d_W-d_H$. Theorem~\ref{thm-xBE-FD} also proves that the Sierpinski carpet satisfies $wBE(\kappa)$ 
with  $\kappa= d_W-d_H $, however we conjecture  that in fact  the Sierpinski carpet satisfies  $wBE(\kappa)$ with  
$\kappa >d_W-d_H$.
 It will be a subject of future work to investigate whether $\kappa= d_W-d_H+d_{tH}-1$.
 
We would like to briefly comment on the \textit{curvature} interpretation of the weak Bakry-\'Emery condition $wBE(\kappa)$.  Our Theorem~\ref{thm-xBE-FD} holds only when $d_W>d_H\geq1$, which means that this theorem is applicable for low 
dimensional spaces. In such a low dimensional situation, geometrically speaking, there is no curvature. For nested fractals, the topological  and topological Hausdorff dimension are both 1. Thus, in some sense, they are analogues of lines which have zero curvature.  From a different perspective this corresponds to the Hodge-type theorems 
in~\cite{HT-Hodge,HT-curl} and Liouville-type theorems~\cite{gong2018li,hua2017liouville} 
and~\cite[Introduction and Section 4]{St99}.  The curvature interpretation of $wBE(\kappa)$ will only manifest itself in higher dimension when $d_H > d_W$, and this is why Subsection~\ref{subsec-products} is important, as it 
allows us to construct higher dimensional examples satisfying $wBE(\kappa)$. 
When $d_H>d_W$ one can expect the boundary of sets of finite perimeter to have a real geometry more complicated than that of Cantor sets. 

Beyond direct products of fractional spaces \cite{IRS13,St09p,St07p,St05p}, which in a sense are still flat, one could try to construct ``fractional manifolds, or fractafolds \cite{Str,ST}, with non-negative curvature" which would be metric spaces with 
heat kernels satisfying a sub-Gaussian estimate and a geometric non-negative curvature condition, which would in turn
imply the validity of $wBE(\kappa)$ on these fractafolds. This is beyond the scope of the present paper. 

One long term goal of this project is also to develop tools for Li-Yau type estimates, in particular on the decay of the gradient of the heat kernel. In our setting a gradient is to be understood in a measure-theoretic sense, which motivates a large part of our work. Consideration of fractals in this context is important, in particular, because they appear as models for manifolds with slow heat kernel decay \cite{BCG01}, and limit sets of Schreier graphs of self-similar groups, which include groups of intermediate growth and non elementary amenable groups, see \cite{NT08,Ne05,BGN01,KSW12} and references therein. 
The forthcoming papers~\cite{ABCRST5,ABCRST6} will extend these ideas to non-local forms and infinite dimensional spaces.
 
\subsection*{Main results on BV functions under the $wBE(\kappa)$ condition}

On a $d_H$-Ahlfors regular metric measure space $(X,d,\mu)$  whose heat kernel satisfies the sub-Gaussian estimates \eqref{eq:subGauss-upper},  we define
\[
BV(X):=KS^{\lambda^\#_1,1}(X)=\B^{1,\alpha_1^\#}(X)
\]
and for $f \in BV(X)$,
\[
\mathbf{Var} (f):=\liminf_{r\to 0^+}\int_X\int_{B(x,r)}\frac{|f(y)-f(x)|}{r^{\lambda_1^\#} \mu(B(x,r))}\, d\mu(y)\, d\mu(x).
\]
We show that  for nested fractals, or their products, $BV(X)$ is  dense in $L^1(X,\mu)$, see Theorems~\ref{T:nested} and~\ref{T:nested product}.  

A set $E \subset X$ will be said to be of finite perimeter if $\mathbf{1}_E \in BV(X)$. For a set $E$ of finite perimeter 
we define its perimeter as $P(E)=\mathbf{Var} (\mathbf{1}_E)$.  Note that, unlike in the strictly local setting in \cite{ABCRST2}, the perimeter $P(E)$ may not be induced by a Radon measure in a classical sense, but in some generalized sense that will be further studied in some specific situations in~\cite{ABCRST4}.

Our main assumption to study the BV class is that $X$ satisfies $wBE(\kappa)$ with  $$\kappa=d_W-\lambda_1^\#=d_W(1-\alpha_1^\#).$$  From Theorems \ref{thm-xBE-FD} and \ref{tensorization} this assumption is, for instance, satisfied for nested fractals or their products. The main results we obtain under this assumption are the following:

\begin{enumerate}

\item \underline{Locality property} (Theorem \ref{bounded embedding}):
There is a constant $C>0$ such that for every $f \in BV(X)$,
\[
\sup_{r >0}  \frac{1}{r^{d_H+d_W-\kappa}} \int_X \int_{B(y,r)} |f(x) -f(y)| d\mu(x) \, d\mu(y) \le C \mathbf{Var} (f).
\]
\item  \underline{Co-area estimate} (Theorem \ref{coarea formula 1a}):
There exist constants $c,C>0$ such that for every non-negative $ f \in BV(X)$, 
\[
 c\int_0^\infty \mathbf{Var} ( \mathbf{1}_{E_t(f)}) dt \le \mathbf{Var} (f) \le C \int_0^\infty \mathbf{Var} ( \mathbf{1}_{E_t(f)}) dt,
\]
where $E_t(f)=\{x\in X\, :\, f(x)>t\}$. In particular, for $f\in BV(X)$ the sets $E_t(f)=\{x\in X\, :\, f(x)>t\}$ are
of finite perimeter for almost every $t>0$. 
\item (Theorem \ref{Minkowski}): There exists a constant $C>0$ such that for every Borel set $E \subset X$,
\[
P(E) \le C \mathcal{C}^*_{d_W-\kappa} (E),
\]
where $\mathcal{C}^*_{d_W-\kappa} (E)$ denotes the $(d_W-\kappa)$-codimensional lower Minskowski content of $E$. In particular, any set whose measure-theoretic boundary has finite $(d_W-\kappa)$-codimensional lower Minskowski content has finite perimeter

\item \underline{Sobolev inequality I} (Theorem \ref{Sobolev global}):
Assume $d_W-\kappa < d_H$. Then $BV(X) \subset L^{1^*}(X,\mu)$ and there is $C>0$ such that for every $f \in BV(X)$,
\[
\| f \|_{L^{1^*}(X,\mu)} \le C \mathbf{Var}(f),
\]
where the critical Sobolev exponent $1^*$ is given by the formula
\[
\frac{1}{1^*}=1-\frac{d_W-\kappa}{d_H}.
\]
In particular, there exists a constant $C>0$ such that for every set $E$ of finite perimeter
\[
\mu(E)^{\frac{d_H-d_W+\kappa}{d_H}} \le C P(E).
\]
This is our analog of an isoperimetric inequality.
\item \underline{Sobolev inequality II} (Theorem \ref{Sobolev 2}):
Assume $\kappa=d_W-d_H>0$. Then $BV(X) \subset L^\infty(X,\mu)$ and there exists a constant $C>0$ such that for every $f \in BV(X)$ and a.e. $x,y \in X$
\[
| f(x)-f(y)| \le C \mathbf{Var}(f).
\]
Note that this is in contrast to the strictly local case, where no such pointwise control can be obtained for BV functions; however,
in that case, if $X$ also supports a $1$-Poincar\'e inequality, then we have a pointwise control in terms of the Hardy-Littlewood
maximal function of the BV energy measure.
\end{enumerate}


We also show that BV functions naturally induce Radon measures on $X$ that we call BV measures, see Section \ref{S:BV-measures}. In a certain sense, those measures can be thought of as gradient measures of  BV functions. 
Because of possible oscillatory phenomena due to the geometry of the underlying space $X$, we do not expect that 
a given $f\in BV(X)$ 
has in general a unique associated BV measure. However, Theorem~\ref{equivalence BV} shows 
the remarkable fact that 
all the BV measures associated to a given $f$ are mutually equivalent. If the function $f$ is regular enough we show in Theorem~\ref{thm-bv-kappa} that its energy measure can be controlled by the lower envelope of its BV measures.
 
\subsection*{Main examples}

The motivation for this paper comes from the following
three standard fractal examples: 
unbounded Vicsek set (Figure~\ref{fig-Vicsek}), unbounded Sierpinski gasket (Figure~\ref{fig-SG}), 
and unbounded Sierpinski carpet (Figure~\ref{fig-SC}). The properties of BV spaces are remarkably 
different in these cases and, therefore, on spaces with sub-Gaussian heat kernel bounds~\eqref{eq:subGauss-upper} 
one can expect a theory of BV functions that 
in general is analogous to the BV theory in $\mathbb R^d$, 
but in some sense richer in detail and more variable. 

  On the Vicsek set, as on all nested fractals, 
  $\kappa=d_W-d_H$  and BV functions of finite energy are dense in $L^1$, see
 Theorems~\ref{T:nested} and~\ref{T:Vicsek-SG}(\ref{T:Vicsek-SG1}), with equivalent BV and energy measures. Furthermore, in a future work on the Vicsek set one will see that we can develop a complete theory analogous to the one dimensional case but including new oscillatory phenomena. 
\begin{figure}[htb]\centering
\includegraphics[trim={63 83 180 201},clip,height=0.20\textwidth]{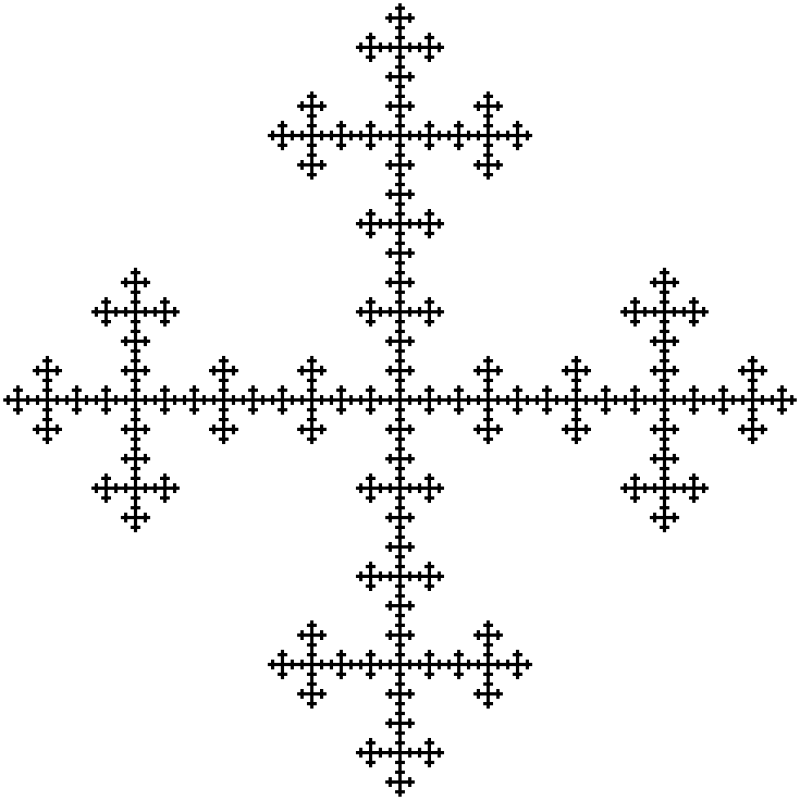}
\caption{A part of an infinite, or unbounded, Vicsek set.} 
\label{fig-Vicsek}
\end{figure}
 \begin{figure}[htb]\centering
 	\includegraphics[trim={83 0 169 63},clip,height=0.20\textwidth]{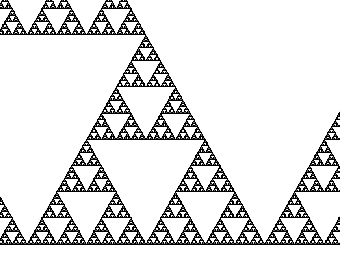}
 	\caption{A part of an infinite, or unbounded, Sierpinski gasket.} \label{fig-SG}
 \end{figure}
 
 On the Sierpinski gasket we also have $\kappa=d_W-d_H$, but BV functions do not even contain piecewise harmonic functions, see Theorems \ref{T:nested} and \ref{T:Vicsek-SG}(\ref{T:Vicsek-SG2}), and one expects all BV functions to be discontinuous, and BV measures to be purely atomic, see Conjecture~\ref{conj-SG}. 
The absence of intrinsically smooth functions of bounded variation is a very surprising phenomenon that has not been observed before. On the Sierpinski gasket differentiability properties of 
intrinsically smooth functions rely on delicate results following from the Furstenberg-Kesten theory of invariant measures and Lyapunov exponents for products of i.i.d. random matrices, which includes a non-commutative matrix version of the classical ergodic theorems. In particular, one can expect that the estimates of the Lyapunov exponents are intimately related to the Besov-type estimates of the intrinsically smooth functions. A detailed analysis involves the  mutual singularity of energy and Hausdorff measures \cite{Kus89,BenBassatStrichartzTeplyaev,HinoS1,HinoS2}. The difference in the analysis  on the Vicsek set and on the Sierpinski gasket is topological, see \cite{ITR12}. 
This will be the subject of future work in~\cite{ABCRST4}.

  \begin{figure}[htb]
  	\centering
  	\includegraphics[trim={8 23 38 70},clip,height=.20\textwidth] {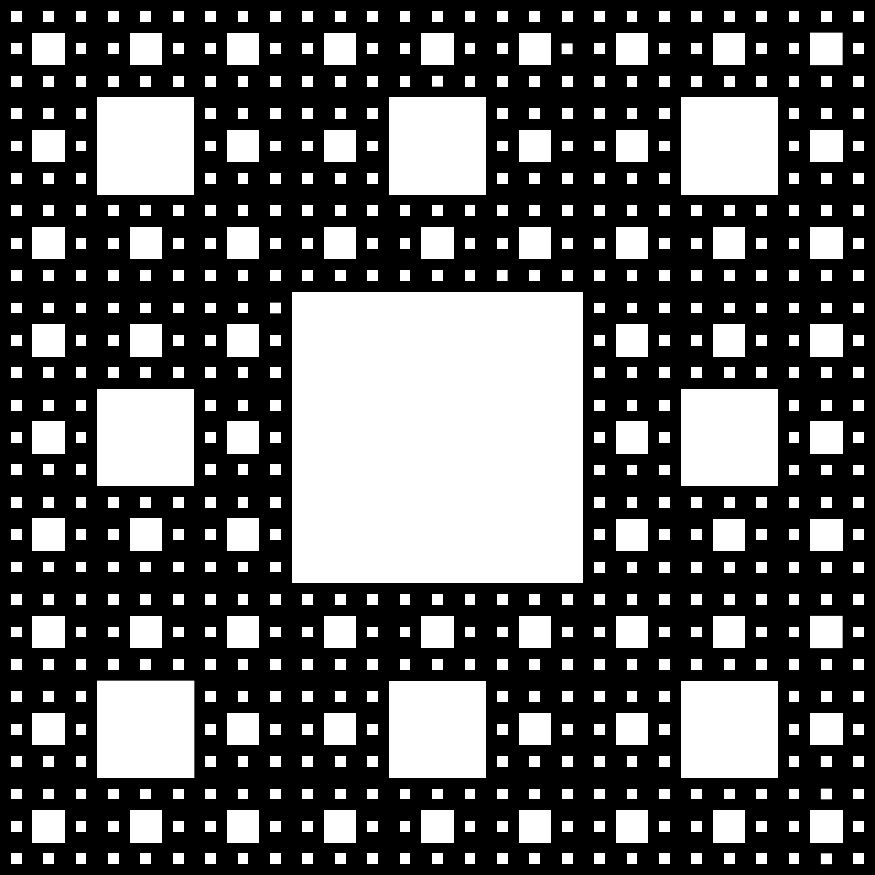}
  	\caption{A part of an infinite, or unbounded, Sierpinski carpet.}\label{fig-SC}
  \end{figure}

On the Sierpinski carpet we 
conjecture that $d_W-d_H <\kappa\leqslant d_W-d_H+d_{tH}-1$, c.f.~Conjecture~\ref{conj-SC}. 
Proving this fact would involve, in particular, improving estimates on the H\"older continuity of harmonic functions obtained in \cite{BB89,Ba98,BB99}. 
It is also
closely related 
to the Besov critical exponents defined in~\eqref{eqn:defnBesovcritexponents}, and to a measurable version of isoperimetric type arguments that will be the subject of future work. 
In addition, this conjecture combines the fractal analog of the Einstein-type relation $r\simeq t^{1/d_W}$ between spatial distance $r$ and time $t$ with a detailed analysis of the $\varepsilon$-neighborhoods of boundaries of open sets 
and our co-area formulas in Section~\ref{S:Co-area formula}.

\subsection*{Convention and notations} 
Throughout the paper we use  $c,C$ to denote positive constants that may change from line to line. The 
quadruple $(X, \mu, \mathcal{E}, \mathcal{F})$ denotes a topological measure space $(X,\mu)$ equipped with a Dirichlet form $(\mathcal{E},\mathcal{F})$ on $L^2(X,\mu)$, where $\mathcal{F}$ denotes the collection of functions $f\in L^2(X,\mu)$ for which $\mathcal{E}(f,f)$ is finite. The notations 
$d_H, d_W$ and $d_{tH}$ denote the Hausdorff, walk, and topological-Hausdorff dimensions respectively, of a space $X$ with metric $d$. 
Strictly speaking, we deal with a metric measure space $(X,d,\mu)$ equipped with a 
Dirichlet form $(\mathcal{E},\mathcal{F})$. We will assume throughout the paper that $\mathcal{E}$ is associated with the metric $d$ via
the sub-Gaussian estimates~\eqref{eq:subGauss-upper} and the weak Bakry-\'Emery curvature conditions~\eqref{WBECDintro}
mentioned above.

\subsection*{Acknowledgments} 
The authors thank 
Prof.\,\,Martin Barlow for helpful information and 
Prof.\,\,Naotaka Kajino for many stimulating and helpful discussions. 

P.A-R. was partly supported by the Feodor Lynen Fellowship, Alexander von Humboldt Foundation (Germany) and the grant DMS~\#1951577. F.B. was partly supported by the grant DMS~\#1660031 of the NSF (U.S.A.) and a Simons Foundation Collaboration grant. L.R. was partly supported by the grant DMS~\#1659643  of the NSF (U.S.A.).  N.S. was partly supported by the grant DMS~\#1800161 of the NSF (U.S.A.).  A.T. was partly supported by the grant DMS~\#1613025 of the NSF (U.S.A.).


\section{Preliminaries}\label{S:Preliminaries}

Our assumptions are quite general, and the main classes of examples we are interested in this paper are fractal spaces. 
We refer to \cite{Ba98,Gri,KigB,Kig:RFQS} for further details on the following framework and assumptions.

\subsection{Metric measure Dirichlet spaces with sub-Gaussian heat kernel estimates}

Let $(X,d,\mu)$ be a locally compact metric measure space where $\mu$ is a Radon measure supported on $X$. Let now $(\mathcal{E},\mathcal{F}=\mathbf{dom}(\mathcal{E}))$ be a Dirichlet form on $X$, that is: a densely defined, closed, symmetric and Markovian form on $L^2(X,\mu)$, see~\cite{FOT,ChenFukushima}. We denote by $C_c(X)$ the vector space of all continuous functions with compact support in $X$ and $C_0(X)$ its 
closure with respect to the supremum norm. A core for $(X,\mu,\mathcal{E},\mathcal{F})$ is a subset 
$\mathcal{C}$ of $C_c(X) \cap \mathcal{F}$ which is dense in $C_c(X)$ in the supremum 
norm and dense in $\mathcal{F}$ in the norm 
\[
\|f\|_{\DF_1}:=
\left( \| f \|_{L^2(X,\mu)}^2 + \mathcal{E}(f,f) \right)^{1/2}.
\]
In the literature, this norm is sometimes denoted by $H^1(X,\mu)$ or $W^{1,2}(X,\mu)$.
The Dirichlet form $\mathcal{E}$ is called regular if it admits a core. It is called  \textit{strongly local} 
if for any two functions $u,v\in\mathcal{F}$ with compact supports such that $u$ is constant in a 
neighborhood of the support of $v$, we have $\mathcal{E}(u,v)=0$, see \cite[Page 6]{FOT}. 
We denote by $\{P_t\}$ the heat semigroup associated with the Dirichlet space $(X,\mathcal{E},\mathcal F, \mu)$ and refer to Section 2.2 in \cite{ABCRST1} for a summary of its basic properties.

Throughout the paper, we make the following assumptions.  Note that they are not independent (some may be derived from combinations of the others); the list was chosen for comprehension rather than minimality.

\begin{assumption}[Regularity]\label{A1}\ 
\begin{itemize}
\item $B(x,r):=\{y\in X\mid d(x,y)<r\}$ has compact closure for any $x\in X$ and any $r\in(0,\infty)$.
\item $\mu$ is Ahlfors $d_H$-regular, i.e.\ there exist $c_{1},c_{2},d_{H}\in(0,\infty)$ such that $c_{1}r^{d_{H}}\leq\mu\bigl(B(x,r)\bigr)\leq c_{2}r^{d_{H}}$ for any $x\in X$ and any $r\in\bigl[0,+\infty\bigr)$.
\item $\mathcal{E}$ is a regular, strongly local Dirichlet form.
\end{itemize}\end{assumption}
\begin{assumption}[Sub-Gaussian Heat Kernel Estimates]\label{sGKHE}
$\{P_t\}$ has a continuous heat kernel $p_t(x,y)$ satisfying, for some $c_{3},c_{4}, c_5, c_6 \in(0,\infty)$ and $d_{W}\in (2,+\infty)$,
 \begin{equation}\label{eq:subGauss-upper}
 c_{5}t^{-d_{H}/d_{W}}\exp\biggl(-c_{6}\Bigl(\frac{d(x,y)^{d_{W}}}{t}\Bigr)^{\frac{1}{d_{W}-1}}\biggr) 
 \le p_{t}(x,y)\leq c_{3}t^{-d_{H}/d_{W}}\exp\biggl(-c_{4}\Bigl(\frac{d(x,y)^{d_{W}}}{t}\Bigr)^{\frac{1}{d_{W}-1}}\biggr)
 \end{equation}
 for $\mu\!\times\!\mu$-a.e.\ $(x,y)\in X\times X$ and each $t\in\bigl(0,+\infty\bigr)$. 
\end{assumption}

The parameter $d_H$ is the Hausdorff dimension, and the parameter $d_W$ is called the walk dimension, even though it is, strictly speaking, not a dimension of a geometric object. 
It is possible to prove that if the metric space $(X,d)$ satisfies a chain condition, then $ 2 \le d_W \le d_H+1$, 
see \cite{GK08,Barlow04,GHL:TAMS2003}. When $d_W=2$, one speaks of Gaussian estimates and when $d_W > 2$, one speaks then of sub-Gaussian estimates. In this framework, it is known that the semigroup $\{P_t\}$ is conservative, i.e. $P_t 1=1$. It is important to note that in this paper, unlike \cite{ABCRST2}, the distance $d(x,y)$ is not necessarily the intrinsic distance associated with the Dirichlet form. The only link between the distance and the Dirichlet form we need to develop our theory is the sub-Gaussian heat kernel estimates \eqref{eq:subGauss-upper}.

The following is an easy but frequently-used consequence of the sub-Gaussian bounds for the heat kernel.
\begin{lemma}
Let $\kappa \ge 0$. There are constants $C,c>0$ so that for every $x,y \in X$ and $t>0$,
\begin{equation}\label{eq:boundpolyptbypct}
 d(x,y)^\kappa p_t(x,y)\leq C t^{\kappa/d_W} p_{ct}(x,y). 
\end{equation}
\end{lemma}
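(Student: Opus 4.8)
The plan is to exploit the fact that the Gaussian-type factor in the upper and lower bounds of \eqref{eq:subGauss-upper} differs only in the constant multiplying the sub-Gaussian exponent, so that the polynomial prefactor $d(x,y)^\kappa$ can be absorbed by slightly weakening the exponential decay, which amounts to a rescaling of time. Writing $u:=\frac{d(x,y)^{d_W}}{t}$, the upper bound gives
\[
 d(x,y)^\kappa p_t(x,y) \le c_3\, t^{-d_H/d_W}\, d(x,y)^\kappa \exp\!\Bigl(-c_4\, u^{\frac{1}{d_W-1}}\Bigr).
\]
First I would express $d(x,y)^\kappa = t^{\kappa/d_W} u^{\kappa/d_W}$, so that the target inequality reduces to controlling $u^{\kappa/d_W}\exp\!\bigl(-c_4 u^{1/(d_W-1)}\bigr)$ by a constant times $\exp\!\bigl(-c_4' u^{1/(d_W-1)}\bigr)$ for a suitable $c_4' < c_4$. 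The elementary fact I would use is that for any $a>0$, $b>0$ and $\theta>0$ there is a constant $C=C(a,b,\theta)$ with $s^{\theta} e^{-a s} \le C e^{-b s}$ for all $s\ge 0$ whenever $b<a$; here $s = u^{1/(d_W-1)}$ and $\theta = \kappa (d_W-1)/d_W$.

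Next I would feed the resulting bound back into the lower heat kernel estimate. Choosing $c_4'$ strictly between the upper-bound constant $c_4$ and any fixed target, I obtain
\[
 d(x,y)^\kappa p_t(x,y) \le C\, t^{\kappa/d_W}\, t^{-d_H/d_W}\exp\!\Bigl(-c_4'\,\Bigl(\tfrac{d(x,y)^{d_W}}{t}\Bigr)^{\frac{1}{d_W-1}}\Bigr).
\]
The point now is to recognize the right-hand side, up to the explicit factor $t^{\kappa/d_W}$ and a change of the polynomial prefactor, as a multiple of $p_{ct}(x,y)$ for a suitable $c>0$. Comparing exponents, I would pick $c$ so that $c_4'\, (d(x,y)^{d_W}/t)^{1/(d_W-1)} = c_6\,(d(x,y)^{d_W}/(ct))^{1/(d_W-1)}$, which fixes $c = (c_6/c_4')^{d_W-1}$ and matches the sub-Gaussian exponent of the \emph{lower} bound at time $ct$. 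The prefactor comparison $t^{-d_H/d_W} \simeq (ct)^{-d_H/d_W}$ contributes only the harmless constant $c^{d_H/d_W}$, and the lower bound \eqref{eq:subGauss-upper} at time $ct$ then furnishes $p_{ct}(x,y)$ from below, completing the estimate $d(x,y)^\kappa p_t(x,y)\le C\,t^{\kappa/d_W}\,p_{ct}(x,y)$.

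I expect the only genuine subtlety to be the bookkeeping of constants, in particular ensuring that the chosen $c_4'$ simultaneously satisfies $c_4' < c_4$ (so the scalar inequality $s^\theta e^{-c_4 s}\le C e^{-c_4' s}$ applies) and yields a time-rescaling factor $c=(c_6/c_4')^{d_W-1}$ that is genuinely positive and finite; since $c_4,c_6>0$ and $d_W\ge 2$ this is automatic, but one must verify there is no circularity between the choices. A secondary point is that \eqref{eq:subGauss-upper} holds only for $\mu\times\mu$-almost every $(x,y)$, so the inequality \eqref{eq:boundpolyptbypct} is likewise an almost-everywhere statement; since $p_t$ is assumed continuous under Assumption~\ref{sGKHE}, the bound in fact extends to all $x,y\in X$ by continuity, and I would note this to justify the ``for every $x,y$'' phrasing. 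No curvature or geometric input is needed here: the lemma is a purely analytic consequence of the matching sub-Gaussian form of the two-sided kernel bounds.
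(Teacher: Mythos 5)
Your proof is correct and takes essentially the same route as the paper's: both use \eqref{eq:subGauss-upper} to bound $p_t(x,y)$ from above and $p_{ct}(x,y)$ from below, reducing the claim (with $u=d(x,y)^{d_W}/t$) to the elementary scalar inequality $u^{\kappa/d_W}\exp\bigl(-c_4 u^{1/(d_W-1)}\bigr)\leq C\exp\bigl(-c_6 (u/c)^{1/(d_W-1)}\bigr)$ for suitable constants $C,c>0$. Your additional bookkeeping—the intermediate constant $c_4'<c_4$, the explicit choice $c=(c_6/c_4')^{d_W-1}$, and the continuity argument upgrading the $\mu\times\mu$-a.e.\ bound to all $x,y\in X$—merely makes explicit what the paper leaves implicit.
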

\begin{proof}
Using the sub-Gaussian estimate~\eqref{eq:subGauss-upper} to bound $p_t(x,y)$ from above and $p_{ct}(x,y)$ from below, this reduces to the observation that
\begin{equation*}
 \Bigl(\frac{d(x,y)^{d_W}}t \Bigr)^{\kappa/d_W}\exp\biggl(-c_4 \Bigl(\frac{d(x,y)^{d_W}}t \Bigr)^{\frac1{d_W-1}}\biggr)
 \leq C \exp\biggl(-c_6 \Bigl(\frac{d(x,y)^{d_W}}{ct} \Bigr)^{\frac1{d_W-1}}\biggr)
 \end{equation*} 
for a suitable choice of $C$ and $c$.
\end{proof}

\subsection{Barlow's fractional metric spaces and fractional diffusions}\label{ssec:Barlowspaces}
Our setting is closely related to the setting of fractional metric spaces with fractional diffusions defined by Barlow~\cite{Ba98}, which differ from ours only in that they are assumed to be geodesic; later we will see that this latter assumption implies, and is perhaps no stronger than, the weak Bakry-\'Emery assumption we need in order to construct a rich theory of BV functions.

According to \cite[Definition 3.2]{Ba98}, a complete metric measure space $(X,d)$ with a Borel measure $\mu$ is a \textit{fractional metric space} of dimension $d_H$ if it  is $d_H$-Ahlfors regular and satisfies the \textit{midpoint property}, i.e. for any $x,y\in X$ there exists $z\in X$ such that $d(x,z)=d(z,y)=\frac{1}{2}d(x,y)$.  The latter is equivalent to requiring the space be geodesic.  In this context, Barlow introduced in~\cite[Section 3]{Ba98} a class of processes called \textit{fractional diffusions}. A fractional diffusion is a $\mu$-symmetric, conservative Feller diffusion on $X$ for which there is a jointly continuous heat kernel $p_t(x,y)$ that is symmetric, has the semigroup property and satisfies the sub-Gaussian estimates~\eqref{eq:subGauss-upper}.  In his notation, our $P_t$ is in $\text{FD}(d_H,d_W)$ provided $X$ is geodesic.  We note that in this context~\cite[Theorem 3.20]{Ba98} says $2\leq d_W\leq 1+d_H$.

\subsection{Heat kernel based Besov classes}

 Let $p \ge 1$ and $\alpha \ge 0$. As in \cite{ABCRST1}, we define the Besov seminorm
\begin{equation}\label{eq:defnofheatbesovnorm}
\| f \|_{p,\alpha}:= \sup_{t >0} t^{-\alpha} \left( \int_X \int_X |f(x)-f(y) |^p p_t (x,y) d\mu(x) d\mu(y) \right)^{1/p}
\end{equation}
and define the heat semigroup-based Besov class by
\[
\mathbf{B}^{p,\alpha}(X):=\{ f \in L^p(X,\mu)\, :\, \| f \|_{p,\alpha} <+\infty \}.
\]

Our first goal is to compare the space $\mathbf{B}^{p,\alpha}(X)$ to Besov type spaces previously 
considered in a similar framework (see \cite{Gri}).

For $\alpha\in[0,\infty)$ and $p\in[1,\infty)$, we introduce the following seminorm: for $f\in L^{p}(X,\mu)$ and $r\in(0,\infty)$,
\begin{equation}\label{eq:Besov-seminorm-r}
N^{\alpha}_{p}(f,r):=\frac{1}{r^{\alpha+d_{H}/p}}\biggl(\iint_{\Delta_r}|f(x)-f(y)|^{p}\,d\mu(x)\,d\mu(y)\biggr)^{1/p}
\end{equation}
 and
\begin{equation}\label{eq:Besov-seminorm}
N^{\alpha}_{p}(f):=\sup_{r\in(0,1]}N^{\alpha}_{p}(f,r),
\end{equation}
where $$\Delta_r=\{(x,y)\in X\times X:d(x,y)<r\}.$$

We then define the Besov space $\mathfrak{B}^{\alpha}_{p}(X)$ by
\begin{equation}\label{eq:Besov}
\mathfrak{B}^{\alpha}_{p}(X):=\bigl\{f\in L^{p}(X,\mu)\, :\, N^{\alpha}_{p}(f)<\infty\bigr\}.
\end{equation}

It is clear that $\mathfrak{B}^{\alpha_{2}}_{p}(X)\subset\mathfrak{B}^{\alpha_{1}}_{p}(X)$
for $\alpha_{1},\alpha_{2}\in[0,\infty)$ with $\alpha_{1}\leq\alpha_{2}$.

\begin{theorem}\cite[Theorem~3.2]{P-P10}\label{Besov characterization}
Let $p \ge 1$ and $\alpha \ge 0$. We have $\mathfrak{B}^{\alpha}_{p}(X) = \mathbf{B}^{p,\frac{\alpha}{d_W}}(X)$ and 
there exist constants $c_{p},C_{p,\alpha}>0$ such that for every $f \in \mathfrak{B}^{\alpha}_{p}(X)$ and $r >0$,
\[
c_{p} \sup_{s\in(0,r]}N^{\alpha}_{p}(f,s) \le \| f \|_{p,\alpha/d_W} 
\le C_{p,\alpha} \left( \sup_{s\in(0,r]}N^{\alpha}_{p}(f,s)+\frac{1}{r^{\alpha}} \|f\|_{L^{p}(X,\mu)} \right).
\]
In particular, $ \| f \|_{p,\alpha/d_W} \simeq \sup_{s\in(0,+\infty) }N^{\alpha}_{p}(f,s)$, and $f\in L^p(X,\mu)$ is in $\B^{p,\frac\alpha{d_W}}(X)$ if and only if $\limsup_{r\to0^+} N_p^\alpha(f,r)<\infty.$
\end{theorem}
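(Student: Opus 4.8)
The plan is to transfer between the heat-semigroup quantity and the ball quantity through the scaling relation $t\leftrightarrow r^{d_W}$ dictated by the sub-Gaussian estimates, treating separately the region $\{d(x,y)<t^{1/d_W}\}$, where $p_t(x,y)\simeq t^{-d_H/d_W}$, and its complement, where the Gaussian factor forces rapid decay. Writing $E_p(f,t)=\iint_{X\times X}|f(x)-f(y)|^p p_t(x,y)\,d\mu(x)\,d\mu(y)$, so that $\|f\|_{p,\alpha/d_W}=\sup_{t>0}t^{-\alpha/d_W}E_p(f,t)^{1/p}$, the substitution $t=r^{d_W}$ makes $t^{-d_H/d_W}=r^{-d_H}$ and $t^{-\alpha/d_W}=r^{-\alpha}$, which is precisely the normalization appearing in $N_p^\alpha(f,r)$ once one recalls that $\iint_{\{d(x,y)<r\}}|f(x)-f(y)|^p\,d\mu\,d\mu=r^{p\alpha+d_H}N_p^\alpha(f,r)^p$.

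The lower bound is the easy half. Restricting $E_p(f,t)$ to $\{d(x,y)<t^{1/d_W}\}$ and using the lower sub-Gaussian bound in~\eqref{eq:subGauss-upper}, on that set $p_t(x,y)\ge c_5e^{-c_6}t^{-d_H/d_W}$; with $t=r^{d_W}$ this gives $E_p(f,r^{d_W})\ge c_5e^{-c_6}r^{p\alpha}N_p^\alpha(f,r)^p$, hence $\|f\|_{p,\alpha/d_W}\ge (c_5e^{-c_6})^{1/p}\sup_{r>0}N_p^\alpha(f,r)$, which in particular dominates $\sup_{s\in(0,r]}N_p^\alpha(f,s)$ and yields the left inequality.

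For the upper bound I would first prove an auxiliary estimate controlling the ball quantity at \emph{every} scale by its small-scale supremum together with the $L^p$ norm: splitting $\iint_{\{d(x,y)<s\}}$ into $\{d<r\}$ and $\{r\le d<s\}$ for $s\ge r$ and bounding the second piece by $2^p c_2 s^{d_H}\|f\|_{L^p(X,\mu)}^p$ via Ahlfors regularity (Assumption~\ref{A1}) gives
\[
N_p^\alpha(f,s)\le C\Bigl(\sup_{s'\in(0,r]}N_p^\alpha(f,s')+r^{-\alpha}\|f\|_{L^p(X,\mu)}\Bigr)=:C M_r\qquad\text{for all }s>0.
\]
Then, for fixed $t$ with $\rho:=t^{1/d_W}$, I would decompose $E_p(f,t)$ into the near region $\{d(x,y)<\rho\}$ and dyadic annuli $A_j=\{2^j\rho\le d(x,y)<2^{j+1}\rho\}$. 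On $A_j$ the upper sub-Gaussian bound gives $p_t(x,y)\le c_3\rho^{-d_H}\exp(-c_4 2^{jd_W/(d_W-1)})$, while $\iint_{A_j}|f(x)-f(y)|^p\le (2^{j+1}\rho)^{p\alpha+d_H}N_p^\alpha(f,2^{j+1}\rho)^p\le (2^{j+1}\rho)^{p\alpha+d_H}(CM_r)^p$. Summing, the doubly-exponential decay $\exp(-c_4 2^{jd_W/(d_W-1)})$ defeats the polynomial growth $2^{(j+1)(p\alpha+d_H)}$, so the series converges to a constant depending only on $p,\alpha,d_H,d_W$; together with the near term this yields $E_p(f,t)\le C_{p,\alpha}M_r^p\rho^{p\alpha}$, i.e.\ $t^{-\alpha/d_W}E_p(f,t)^{1/p}\le C_{p,\alpha}M_r$, and taking the supremum over $t$ produces the right inequality.

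The main obstacle is exactly this far region: the heat kernel probes all spatial scales simultaneously, so $E_p(f,t)$ cannot be controlled purely from $\sup_{s\in(0,r]}N_p^\alpha$. The resolution is the displayed auxiliary estimate, whose content is that large-scale Besov quantities are automatically tamed by Ahlfors regularity and the $L^p$ norm — this is precisely what forces the additive term $r^{-\alpha}\|f\|_{L^p(X,\mu)}$ into the statement. Finally, the two ``in particular'' assertions follow formally: for $\alpha>0$, letting $r\to\infty$ collapses the upper bound to $\|f\|_{p,\alpha/d_W}\simeq\sup_{s>0}N_p^\alpha(f,s)$, and the $\limsup$ characterization is immediate, since if $\limsup_{r\to0^+}N_p^\alpha(f,r)<\infty$ one may fix $r$ so small that $\sup_{s\in(0,r]}N_p^\alpha(f,s)<\infty$, whereupon finiteness of $\|f\|_{p,\alpha/d_W}$ is guaranteed by the fixed-scale upper bound because $f\in L^p(X,\mu)$.
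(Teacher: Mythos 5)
Your proposal is correct, and your lower bound is verbatim the paper's argument; your upper bound, however, is organized differently enough to be worth comparing. The paper fixes the spatial radius $r$ from the statement and splits the heat integral into $A(t)$, over $\{d(x,y)\ge r\}$, and $B(t)$, over $\{d(x,y)<r\}$: the far part is bounded using only $\|f\|_{L^p(X,\mu)}$ and Ahlfors regularity, producing the explicit damping factor $\exp\bigl(-c_9(r^{d_W}/t)^{1/(d_W-1)}\bigr)$, while the near part is summed over \emph{inward} dyadic annuli $B(y,2^{1-k}r)\setminus B(y,2^{-k}r)$ against $\sup_{s\in(0,r]}N^{\alpha}_{p}(f,s)$, which requires a uniform bound on a series of weights of the form $u_k^{a}e^{-cu_k^{b}}$ at the geometrically spaced points $u_k=r^{d_W}2^{-d_Wk}/t$; the term $r^{-p\alpha}\|f\|_{L^p}^p$ only appears at the very end, by maximizing $t^{-p\alpha/d_W}\exp\bigl(-c_9(r^{d_W}/t)^{1/(d_W-1)}\bigr)$ over $t$. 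You instead front-load the $L^p$ term into a purely measure-theoretic auxiliary lemma, $N^{\alpha}_{p}(f,s)\le C\bigl(\sup_{s'\in(0,r]}N^{\alpha}_{p}(f,s')+r^{-\alpha}\|f\|_{L^p(X,\mu)}\bigr)$ valid at \emph{all} scales $s$, and then split at the $t$-dependent diffusive scale $\rho=t^{1/d_W}$ with \emph{outward} annuli, where convergence is trivial because the doubly exponential decay $\exp\bigl(-c_4 2^{jd_W/(d_W-1)}\bigr)$ beats the single-exponential growth $2^{(j+1)(p\alpha+d_H)}$. Your route is more modular and its series bookkeeping is simpler; the paper's version buys sharper intermediate information, namely the isolated exponential factor multiplying the $L^p$ term and a near/far template that it later adapts (with a $t$-dependent radius $\delta t^{1/d_W}$, actually closer in spirit to your split) in Lemmas \ref{comparison var} and \ref{estimate BV measure 4}. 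Your handling of the two ``in particular'' assertions — letting $r\to\infty$ for $\alpha>0$ to get the equivalence with $\sup_{s>0}N^{\alpha}_{p}(f,s)$, and choosing $r$ small enough that $\sup_{s\in(0,r]}N^{\alpha}_{p}(f,s)<\infty$ for the $\limsup$ characterization — is correct and fills in steps the paper leaves implicit.
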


\begin{remark}
The above theorem is essentially a rephrasing of~\cite[Theorem~3.2]{P-P10}, though the estimates we obtain are slightly sharper. However, 
the notion of Besov spaces given in~\cite{P-P10} considers dyadic 
jumps in the parameter $t$; hence the proof given there is slightly more complicated than ours. We include the relatively short proof because the estimates are useful in later sections.
\end{remark}
\begin{proof}
We first prove the lower bound. For $t>0$ and $\alpha>0$ observe from~\eqref{eq:subGauss-upper} that
\begin{equation*}
	p_t(x,y)\geq c_5 \exp(-c_6) t^{-d_H/d_W}   \quad\text{for}\quad d(x,y)\leq t^{1/d_W},
	\end{equation*}
so that
\begin{align*}
 \| f \|^p_{p,\alpha/d_W}
 &= \sup_{t>0} t^{-\frac{\alpha p}{d_W}} \int_X \int_X |f(x)-f(y) |^p p_t (x,y) d\mu(x) d\mu(y)\\
 &\geq \sup_{t>0} t^{-\frac{\alpha p}{d_W}} \iint_{\Delta_{t^{1/d_W}}} |f(x)-f(y) |^p p_t (x,y) d\mu(x) d\mu(y)\\
 &\geq  \sup_{t>0} c_5 \exp(-c_6) \frac1{t^{(\alpha p+d_H)/d_W}} \iint_{\Delta_{t^{1/d_W}}} |f(x)-f(y) |^p d\mu(x) d\mu(y)\\
 &=c_5 \exp(-c_6)  \sup_{t>0} N_p^\alpha(f,t^{1/d_W})^p,
\end{align*}
from which the lower bound follows.
We now turn to the upper bound. Fixing $r>0$, we set
\begin{align}
A(t)&:=\int_X\int_{X\setminus B(y,r)}p_{t}(x,y)|f(x)-f(y)|^{p}\,d\mu(x)\,d\mu(y),\label{E:A(t)}\\
B(t)&:=\iint_{\Delta_r}p_{t}(x,y)|f(x)-f(y)|^{p}\,d\mu(x)\,d\mu(y),\label{E:B(t)}
\end{align}
so that $ \int_X \int_X |f(x)-f(y) |^p p_t (x,y) d\mu(x) d\mu(y) =A(t)+B(t)$.
By \eqref{eq:subGauss-upper} and the inequality
$|f(x)-f(y)|^{p}\leq 2^{p-1}(|f(x)|^{p}+|f(y)|^{p})$,

\begin{align}
A(t)&\leq\frac{c_{3}}{t^{d_{H}/d_{W}}}\int_X\int_{X\setminus B(y,r)}\exp\biggl(-c_{4}\Bigl(\frac{d(x,y)^{d_{W}}}{t}\Bigr)^{\frac{1}{d_{W}-1}}\biggr)\cdot 2^{p}|f(y)|^{p}\,d\mu(x)\,d\mu(y)
\notag\\
&=
\frac{2^{p}c_{3}}{t^{d_{H}/d_{W}}}\sum_{k=1}^{\infty}\int_X\int_{B(y,2^{k}r)\setminus B(y,2^{k-1}r)}\exp\biggl(-c_{4}\Bigl(\frac{d(x,y)^{d_{W}}}{t}\Bigr)^{\frac{1}{d_{W}-1}}\biggr)|f(y)|^{p}\,d\mu(x)\,d\mu(y)
\notag\\
&\leq
\frac{2^{p}c_{3}}{t^{d_{H}/d_{W}}}\sum_{k=1}^{\infty}\int_X\mu\bigl(B(y,2^{k}r)\bigr)\exp\biggl(-c_{4}\Bigl(\frac{2^{(k-1)d_{W}}r^{d_{W}}}{t}\Bigr)^{\frac{1}{d_{W}-1}}\biggr)|f(y)|^{p}\,d\mu(y)
\notag\\
&\leq
\frac{2^{p}c_{3}}{t^{d_{H}/d_{W}}}\sum_{k=1}^{\infty}c_{2}r^{d_{H}}2^{kd_{H}}\|f\|_{L^{p}}^{p}\exp\biggl(-c_{4}\Bigl(\frac{r^{d_{W}}}{t}\Bigr)^{\frac{1}{d_{W}-1}}\Bigl(2^{\frac{d_{W}}{d_{W}-1}}\Bigr)^{k-1}\biggr)
\notag\\
&=
\|f\|_{L^{p}}^{p}2^{p}c_{2}c_{3}\sum_{k=1}^{\infty}2^{d_{H}}\Bigl(\frac{r^{d_{W}}}{t}2^{d_{W}(k-1)}\Bigr)^{d_{H}/d_{W}}\exp\biggl(-2^{-\frac{d_{W}}{d_{W}-1}}c_{4}\Bigl(\frac{r^{d_{W}}}{t}2^{d_{W}k}\Bigr)^{\frac{1}{d_{W}-1}}\biggr)
\notag\\
&\leq
\|f\|_{L^{p}}^{p}2^{p+d_{H}}c_{2}c_{3}\sum_{k=1}^{\infty}\int_{(r^{d_{W}}/t)(2^{d_{W}})^{k-1}}^{(r^{d_{W}}/t)(2^{d_{W}})^{k}}s^{d_{H}/d_{W}}\exp\Bigl(-2^{-\frac{d_{W}}{d_{W}-1}}c_{4}s^{\frac{1}{d_{W}-1}}\Bigr)\frac{1}{(d_{W}\log 2)s}\,ds
\notag\\
&=
\frac{2^{p+d_{H}}c_{2}c_{3}}{d_{W}\log 2}\|f\|_{L^{p}}^{p}\int_{r^{d_{W}}/t}^{\infty}s^{d_{H}/d_{W}-1}\exp\Bigl(-2^{-\frac{d_{W}}{d_{W}-1}}c_{4}s^{\frac{1}{d_{W}-1}}\Bigr)\,ds\notag\\
&\leq c_{8}\exp\biggl(-c_{9}\Bigl(\frac{r^{d_{W}}}{t}\Bigr)^{\frac{1}{d_{W}-1}}\biggr)\|f\|_{L^{p}(X,\mu)}^{p},
\label{eq:HKBesov-norms-upper-proof1}
\end{align}
where $c_{9}:=2^{-\frac{d_{W}}{d_{W}-1}}c_{4}$ and
$c_{8}:=2^{p+d_{H}}c_{2}c_{3}(d_{W}\log 2)^{-1}\int_{0}^{\infty}s^{d_{H}/d_{W}-1}\exp\bigl(-c_{9}s^{\frac{1}{d_{W}-1}}\bigr)\,ds$.

On the other hand, for $B(t)$, by \eqref{eq:subGauss-upper} we have
\begin{align}
B(t)&\leq\frac{c_{3}}{t^{d_{H}/d_{W}}}\iint_{\Delta_{r}} \exp\biggl(-c_{4}\Bigl(\frac{d(x,y)^{d_{W}}}{t}\Bigr)^{\frac{1}{d_{W}-1}}\biggr)|f(x)-f(y)|^{p}\,d\mu(x)\,d\mu(y)\notag\\
&\leq
\frac{c_{3}}{t^{d_{H}/d_{W}}}\sum_{k=1}^{\infty}\int\limits_X\int\limits_{B(y,2^{1-k}r)\setminus B(y,2^{-k}r)}\exp\biggl(-c_{4}\Bigl(\frac{d(x,y)^{d_{W}}}{t}\Bigr)^{\frac{1}{d_{W}-1}}\biggr)|f(x)-f(y)|^{p}\,d\mu(x)\,d\mu(y)\notag\\
&\leq
\frac{c_{3}}{t^{d_{H}/d_{W}}}\sum_{k=1}^{\infty}\iint_{\Delta_{2^{1-k}r}}\exp\biggl(-c_{4}\Bigl(\frac{2^{-kd_{W}}r^{d_{W}}}{t}\Bigr)^{\frac{1}{d_{W}-1}}\biggr)|f(x)-f(y)|^{p}\,d\mu(x)\,d\mu(y)\notag\\
&\leq 
c_{3} \sum_{k=1}^{\infty}\frac{(2^{1-k}r)^{p\alpha+d_{H}}}{t^{d_{H}/d_{W}}}\exp\biggl(-c_{9}\Bigl(\frac{r^{d_{W}}}{t}2^{d_{W}(1-k)}\Bigr)^{\frac{1}{d_{W}-1}}\biggr)
\iint_{\Delta_{2^{1-k}r}}
\frac{ |f(x)-f(y)|^{p} }{ (2^{1-k}r)^{p\alpha+d_{H}} }
\,d\mu(x)\,d\mu(y)\notag\\
&\leq
 c_{3}2^{p\alpha+d_{H}} t^{\frac{p\alpha}{d_W} }\sup_{s\in(0,r]}N^{\alpha}_{p}(f,s)^{p}\sum_{k=1}^{\infty}\Bigl(\frac{r^{d_{W}}}{t}2^{-d_{W}k}\Bigr)^{\frac{d_H+p\alpha}{d_W}}\exp\biggl(-c_{9}\Bigl(\frac{r^{d_{W}}}{t}2^{-d_{W}(k-1)}\Bigr)^{\frac{1}{d_{W}-1}}\biggr)\notag\\
&\leq 
c_{7}t^{\frac{p\alpha}{d_W} }\sup_{s\in(0,r]}N^{\alpha}_{p}(f,s)^{p}.
\label{eq:HKBesov-norms-upper-proof2}
\end{align}
Hence one has 
\[
 \int_X \int_X |f(x)-f(y) |^p p_t (x,y) d\mu(x) d\mu(y) 
 \le c_{8}\exp\biggl(-c_{9}\Bigl(\frac{r^{d_{W}}}{t}\Bigr)^{\frac{1}{d_{W}-1}}\biggr)\|f\|_{L^{p}}^{p}
 +c_{7}t^{\frac{p\alpha}{d_W} }\sup_{s\in(0,r]}N^{\alpha}_{p}(f,s)^{p}.
\]
This yields
\[
\sup_{t>0}t^{-\frac{p\alpha}{d_W}}\int_X \int_X |f(x)-f(y) |^p p_t (x,y) d\mu(x) d\mu(y) 
\le c_{7} \sup_{s\in(0,r]}N^{\alpha}_{p}(f,s)^{p}+\frac{c_{10}}{r^{p\alpha}} \|f\|_{L^{p}(X,\mu)}^{p}.
\]
The proof is thus complete.
\end{proof}

\subsection{Besov regularity of indicators of sets and density of $\B^{1,\alpha}$ in $L^1$}\label{ssec:besovdensity}

In order to have an interesting theory we certainly need  our Besov spaces to contain non-constant functions.  It is also natural to be concerned with whether they are dense in the Lebesgue spaces.  Accordingly we follow~\cite[Section~5.2]{ABCRST1} and define critical exponents as follows:
\begin{equation}\label{eqn:defnBesovcritexponents}
\begin{aligned}
\alpha^*_p &=\sup \{ \alpha >0\, :\, \B^{p,\alpha}(X) \text{ is dense in } L^p(X,\mu) \},\\
\alpha^{\#}_p &=\sup \{ \alpha >0\, :\, \B^{p,\alpha}(X) \text{ contains non-constant functions}\}.
\end{aligned}
\end{equation}
Note that $\alpha_p^*\le \alpha_p^\#$.
In this section we concern ourselves only with a simple condition for $\B^{1,\alpha}(X)$ to be dense in $L^1(X,\mu)$, as this is the essential case to consider in studying functions of bounded variation.  It will become apparent when we treat the co-area formula, Theorem~\ref{coarea formula 1a} that the significant question is whether a characteristic function $\mathbf{1}_E$ of a Borel set $E$ is in $\B^{1,\alpha}(X)$, and it is well-known that this is related to boundary regularity of the measure-theoretic boundary.

\begin{definition}
Let $E\subset X$ be a Borel set. We say $x$ is a Lebesgue density point of $E$ and write $x\in E^*$ if
\begin{equation*}
	\limsup_{r\to 0^+}\frac{\mu(B(x,r)\cap E)}{\mu(B(x,r))}>0.
	\end{equation*}
The measure-theoretic boundary is $\partial^*E=E^*\cap(E^c)^*$, see e.g.~\cite[Section 4]{AMP}.  Now for $r>0$ 
 define the measure-theoretic $r$-neighborhood $\partial_r^*E$ by  
\begin{equation}\label{eq:measure r neighborhood}
\partial^*_rE:=\big(E^*\cap(E^c)_r\big)\cup \big((E^c)^*\cap E_r\big),
\end{equation}
where $E_r=\{x\in X:\,\mu(B(x,r)\cap E)>0\}$ and similarly for $(E^c)_r$.
\end{definition} 
Notice that $\partial^*E\subset\cap_{r>0}\partial^*_r E\subset\partial E$, where this last is the topological boundary.  We have the following easy consequence of Theorem~\ref{Besov characterization}.

\begin{lemma}\label{lem:charfunctinBesov}
Suppose $E\subset X$ is a finite measure Borel set such that
\begin{equation*}
	\limsup_{r\to0^+} r^{-\alpha d_W} \mu(\partial^*_r E) <\infty,
	\end{equation*}
then $\mathbf{1}_E\in\B^{1,\alpha}(X)$.
\end{lemma}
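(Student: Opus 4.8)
The plan is to reduce the claim to the Besov characterization in Theorem~\ref{Besov characterization}. By that theorem, applied with $p=1$ and Besov exponent $\alpha d_W$, membership $\mathbf{1}_E\in\B^{1,\alpha}(X)$ is equivalent to $\mathbf{1}_E\in L^1(X,\mu)$ together with $\limsup_{r\to0^+}N_1^{\alpha d_W}(\mathbf{1}_E,r)<\infty$. Since $E$ has finite measure, $\mathbf{1}_E\in L^1(X,\mu)$ is immediate, so the entire task is to bound the Korevaar--Schoen-type quantity $N_1^{\alpha d_W}(\mathbf{1}_E,r)$ uniformly as $r\to0^+$, and for this the heat kernel never reappears explicitly—it is already packaged into Theorem~\ref{Besov characterization}.

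First I would compute the numerator of $N_1^{\alpha d_W}(\mathbf{1}_E,r)$ explicitly. Since $|\mathbf{1}_E(x)-\mathbf{1}_E(y)|$ equals $1$ exactly when precisely one of $x,y$ lies in $E$, a symmetrization $x\leftrightarrow y$ over the symmetric region $\{d(x,y)<r\}$ gives
\[
\iint_{\{d(x,y)<r\}}|\mathbf{1}_E(x)-\mathbf{1}_E(y)|\,d\mu(x)\,d\mu(y)=2\int_E\mu\bigl(B(x,r)\cap E^c\bigr)\,d\mu(x).
\]
The integrand is nonzero exactly on $E\cap(E^c)_r$, and by Ahlfors regularity (Assumption~\ref{A1}) it is bounded by $\mu(B(x,r))\le c_2r^{d_H}$. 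Hence the right-hand side is at most $2c_2r^{d_H}\,\mu\bigl(E\cap(E^c)_r\bigr)$.

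The key step is to replace $E$ by its set of density points $E^*$. By the Lebesgue density theorem, valid here because $\mu$ is doubling (indeed Ahlfors regular), $\mu$-a.e.\ point of $E$ is a density point of $E$, so $\mu(E\setminus E^*)=0$. Discarding this null set, $\mu\bigl(E\cap(E^c)_r\bigr)=\mu\bigl(E\cap E^*\cap(E^c)_r\bigr)\le\mu\bigl(E^*\cap(E^c)_r\bigr)\le\mu(\partial^*_rE)$, where the last inequality uses $E^*\cap(E^c)_r\subset\partial^*_rE$ from the definition~\eqref{eq:measure r neighborhood}. Combining the three estimates yields
\[
N_1^{\alpha d_W}(\mathbf{1}_E,r)=\frac{1}{r^{\alpha d_W+d_H}}\iint_{\{d(x,y)<r\}}|\mathbf{1}_E(x)-\mathbf{1}_E(y)|\,d\mu(x)\,d\mu(y)\le 2c_2\,r^{-\alpha d_W}\,\mu(\partial^*_rE),
\]
and taking $\limsup_{r\to0^+}$ bounds this by $2c_2\limsup_{r\to0^+}r^{-\alpha d_W}\mu(\partial^*_rE)<\infty$ by hypothesis. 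Theorem~\ref{Besov characterization} then gives $\mathbf{1}_E\in\B^{1,\alpha}(X)$.

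The only genuinely delicate point is this density-theorem step: the hypothesis controls the measure-theoretic neighborhood $\partial^*_rE$, which is built from density points, whereas the naive symmetrization produces the crude set $E\cap(E^c)_r$. The identity $\mu(E\setminus E^*)=0$ is exactly what bridges this gap, and it is the reason the measure-theoretic boundary rather than $E$ itself is the correct object to hypothesize about. Everything else is routine: the symmetrization of the double integral and the uniform volume bound $\mu(B(x,r))\le c_2r^{d_H}$ from Ahlfors regularity.
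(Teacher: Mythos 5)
Your proof is correct and follows essentially the same route as the paper: reduce to bounding $\limsup_{r\to0^+}N_1^{\alpha d_W}(\mathbf{1}_E,r)$ via Theorem~\ref{Besov characterization}, pass from $E$ to $E^*$ (and implicitly $E^c$ to $(E^c)^*$) by the Lebesgue density theorem available under Ahlfors regularity, restrict to $(E^c)_r$ where the inner integrand is nonzero, and apply the volume bound $\mu(B(x,r))\leq c_2 r^{d_H}$. Your symmetrization collapsing the double integral into twice a single term is a cosmetic variant of the paper's two-term decomposition; otherwise the arguments coincide.
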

\begin{proof}
From Theorem~\ref{Besov characterization} it suffices to show that $\limsup_{r\to0^+}N_1^{\alpha d_W}(\mathbf{1}_E,r)<\infty$, however computing directly from the definition~\eqref{eq:Besov-seminorm-r}
\begin{align*}
	N_1^{\alpha d_W}(\mathbf{1}_E,r)
	&= \frac1{r^{\alpha d_W+d_H}} \iint_{\Delta_r} |\mathbf{1}_E(x)-\mathbf{1}_E(y)|\,d\mu(x)\,d\mu(y) \\
	&= \frac1{r^{\alpha d_W+d_H}} \biggl( \int_E \mu\bigl(B(y,r)\cap E^c\bigr) \,d\mu(y) + \int_{E^c} \mu\bigl(B(y,r)\cap E\bigr)\,d\mu(y) \biggr)\\
	&= \frac1{r^{\alpha d_W+d_H}} \biggl( \int_{E^*\cap (E^c)_r} \mu\bigl(B(y,r)\cap E^c\bigr) \,d\mu(y) + \int_{(E^c)^*\cap E_r} \mu\bigl(B(y,r)\cap E\bigr)\,d\mu(y) \biggr)\\
	&\leq 2c_2r^{-\alpha d_W} \mu(\partial^*_rE),
	\end{align*}
where we used Ahlfors regularity both to ensure Lebesgue density points have full measure (see~\cite[Lemma~1.8]{Hein-Lect} or
\cite[Section 3.4]{nages_book}) so as to pass from $E$ to $E^*$ and from $E^c$ to $(E^c)^*$, and for the estimate $\mu(B(y,r))\leq c_2r^{d_H}$.
\end{proof}
Note that in the above lemma we must have $\alpha d_W\leq d_H$ because the measure of the $r$-neighborhood of a point is bounded below by $cr^{d_H}$.  Rephrasing the above in terms of the critical exponents and using the density in $L^1(X,\mu)$ of the set of characteristic  functions of a basis for the topology we have the following result.

\begin{corollary}\label{cor:conditforbesovdensity}
If there is a non-empty open set $E$ with $\mu(E)<\infty$ and $\limsup_{r\to0^+}r^{-\alpha d_W}\mu(\partial_r^*E)<\infty$, 
then $\alpha_1^\#\geq \alpha$.  If this estimate is true for a family of open sets $E$ which generates the 
topology of $X$, then $\alpha_1^*\geq\alpha$. 
\end{corollary}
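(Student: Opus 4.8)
The plan is to treat this as essentially a bookkeeping consequence of Lemma~\ref{lem:charfunctinBesov} together with the definitions~\eqref{eqn:defnBesovcritexponents} of the critical exponents; the nesting $\B^{1,\alpha}(X)\subset\B^{1,\alpha'}(X)$ for $\alpha'\le\alpha$ (which follows from $\mathfrak{B}^{\alpha_2}_p\subset\mathfrak{B}^{\alpha_1}_p$ and Theorem~\ref{Besov characterization}) means both suprema in~\eqref{eqn:defnBesovcritexponents} are attained over intervals, so it suffices to exhibit the relevant property at the single value $\alpha$. For the first assertion I would first invoke Lemma~\ref{lem:charfunctinBesov}: the hypothesis $\limsup_{r\to0^+}r^{-\alpha d_W}\mu(\partial^*_r E)<\infty$ for the finite-measure Borel set $E$ gives $\mathbf{1}_E\in\B^{1,\alpha}(X)$. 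Since $E$ is non-empty and open, Ahlfors regularity (Assumption~\ref{A1}) forces $\mu(E)>0$, while $\mu(E)<\infty$ in a space of infinite total mass guarantees $E\neq X$, so $\mathbf{1}_E$ is a \emph{non-constant} element of $\B^{1,\alpha}(X)$. By the definition of $\alpha_1^\#$ this yields $\alpha_1^\#\geq\alpha$ directly.

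For the second assertion I would show that the hypothesis forces $\B^{1,\alpha}(X)$ to be dense in $L^1(X,\mu)$. The first step is to pass from the generating family to a basis: finite intersections of sets in the family form a basis $\mathcal{B}$ for the topology, and since $\partial^*_r(A\cap B)\subset\partial^*_rA\cup\partial^*_rB$ one has the subadditivity $\mu(\partial^*_r(A\cap B))\leq\mu(\partial^*_rA)+\mu(\partial^*_rB)$, so every $B\in\mathcal{B}$ inherits the boundary estimate and hence $\mathbf{1}_B\in\B^{1,\alpha}(X)$ by Lemma~\ref{lem:charfunctinBesov}. The same subadditivity applied to finite unions shows $\mathbf{1}_U\in\B^{1,\alpha}(X)$ for every finite union $U$ of basis sets, and since $\B^{1,\alpha}(X)$ is a vector space it contains all finite linear combinations of such indicators.

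The remaining step is the standard measure-theoretic approximation, carried out entirely within this span. Given $f\in L^1(X,\mu)$ and $\ve>0$, I would approximate $f$ by a simple function, approximate each finite-measure Borel set appearing in it by an open superset using outer regularity of the Radon measure $\mu$, cover that open set by basis elements of $\mathcal{B}$ and extract a finite subcover capturing all but $\ve$ of its measure, and finally replace the resulting finite union by the corresponding indicator, which lies in $\B^{1,\alpha}(X)$ by the previous paragraph. Each replacement is controlled in $L^1$-norm, so $\B^{1,\alpha}(X)$ is dense in $L^1(X,\mu)$ and $\alpha_1^*\geq\alpha$.

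The main point to be careful about is not a deep obstacle but a stability issue: one must ensure that every set produced during the approximation still satisfies the boundary estimate, so that its characteristic function genuinely lies in $\B^{1,\alpha}(X)$. This is exactly what the subadditivity of $r\mapsto\mu(\partial^*_rE)$ under finite unions and intersections provides, and it is the only place where the argument uses more than the linearity of the Besov class and Lemma~\ref{lem:charfunctinBesov}. A minor edge case worth recording is the non-constancy of $\mathbf{1}_E$ in the first part, which requires $E\neq X$; in the finite-measure-set-in-an-infinite-mass-space setting of the fractal examples this is automatic.
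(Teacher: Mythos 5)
Your proof is correct and takes essentially the same approach as the paper, which obtains the corollary directly from Lemma~\ref{lem:charfunctinBesov} together with the density in $L^1(X,\mu)$ of spans of characteristic functions of a basis for the topology. The details you supply beyond the paper's one-line justification --- non-constancy of $\mathbf{1}_E$ via $\mu(E)>0$ and $\mu(X)=\infty$ from Ahlfors regularity, and the subadditivity $\mu(\partial^*_r(A\cap B))\le\mu(\partial^*_r A)+\mu(\partial^*_r B)$ (and its analogue for finite unions) used to propagate the boundary estimate through the outer-regularity and subcover approximation --- all check out against the definitions of $E^*$, $E_r$ and $\partial^*_r E$.
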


\section{Weak Bakry-\'Emery curvature condition} \label{Section wBE}

In this section we introduce the key condition that will allow us to study the BV class. This condition, called $wBE(\kappa)$, is the H\"older analogue of the weak Bakry-\'Emery curvature condition that was previously introduced in strictly local Dirichlet spaces (see~\cite{ABCRST2,Ba-Ke19}). It quantifies a uniform regularization property of the heat semigroup in a scale invariant manner.

\subsection{Definition of $wBE(\kappa)$}

\begin{definition}
We say that $(X,\mu,\mathcal{E},\mathcal{F})$ satisfies the weak Bakry-\'Emery non-negative curvature condition $wBE(\kappa)$ if there exist a constant $C>0$ and a parameter $0 < \kappa < d_W$ such that for every $t >0$, $g \in L^\infty(X,\mu)$ and  $x,y \in X$,
\begin{align}\label{WBECD}
| P_t g (x)-P_tg(y)| \le C \frac{d(x,y)^\kappa}{t^{\kappa/d_W}} \| g \|_{ L^\infty(X,\mu)}.
\end{align}
\end{definition}

\begin{remark}\label{chain condition}
We note that if $(X,d)$ satisfies a chain condition as in \cite[Section~7]{Gri}, then one must have $\kappa \le 1$. 
The chain condition is that there is a constant $C_h>0$ so that for each $x,y\in X$ and positive integers $n\ge 2$,
there is a sequence of points $x=x_1, x_2,\cdots,x_n=y$ from $X$ such that for $j=1,\cdots, n-1$ we have
$d(x_j,x_{j+1})\le \tfrac{C_h}{n} d(x,y)$.
Indeed, it is easily seen that the chain condition implies that $\alpha$-H\"older functions $g$ with $\alpha >1$ need to be constant,
for then we have for each integer $n\ge 2$,
\[
|g(x)-g(y)|\le \sum_{j=1}^{n-1}|g(x_j)-g(x_{j+1})|
 \le C\, C_h^\alpha\sum_{j=1}^{n-1}n^{-\alpha}d(x,y)^\alpha\le C\, C_h^\alpha\, d(x,y)^\alpha \, n^{1-\alpha},
\]
and as $\alpha>1$, letting $n\to\infty$ shows that $g(x)=g(y)$.
\end{remark}
Since $P_t$ is a contraction in $L^\infty(X,\mu)$, the estimate \eqref{WBECD} is only relevant when $d(x,y)$ is small compared to $t^{1/d_W}$. In particular, one has the following result:

\begin{lemma}\label{non-unique kappa}
If $(X,\mu,\mathcal{E},\mathcal{F})$ satisfies $wBE(\kappa)$ for some $0 < \kappa < d_W$, then it satisfies $wBE(\kappa')$ for every $0 < \kappa' \le \kappa$.
\end{lemma}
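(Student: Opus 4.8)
The plan is to exploit the fact, already noted just before the statement, that \eqref{WBECD} only carries information in the regime $d(x,y)\le t^{1/d_W}$, while in the complementary regime one simply uses that $P_t$ is a contraction on $L^\infty(X,\mu)$. Concretely, since $P_t$ is Markovian we have $\|P_t g\|_{L^\infty(X,\mu)}\le \|g\|_{L^\infty(X,\mu)}$, whence the crude pointwise bound
\[
|P_t g(x)-P_t g(y)|\le 2\|g\|_{L^\infty(X,\mu)}
\]
holds for all $t>0$ and all $x,y\in X$. This estimate is already of the desired form whenever $d(x,y)\ge t^{1/d_W}$.

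First I would introduce the scale-invariant quantity $s:=d(x,y)/t^{1/d_W}$, so that $wBE(\kappa)$ reads $|P_t g(x)-P_t g(y)|\le C s^\kappa\|g\|_{L^\infty(X,\mu)}$. Fix $0<\kappa'\le\kappa$ and split into two cases according to the size of $s$. If $s\le 1$, then since $\kappa\ge\kappa'$ we have $s^\kappa\le s^{\kappa'}$, so the $wBE(\kappa)$ hypothesis gives $|P_t g(x)-P_t g(y)|\le C s^{\kappa'}\|g\|_{L^\infty(X,\mu)}$. If $s>1$, then $s^{\kappa'}>1$ and the crude bound above gives $|P_t g(x)-P_t g(y)|\le 2\|g\|_{L^\infty(X,\mu)}\le 2 s^{\kappa'}\|g\|_{L^\infty(X,\mu)}$. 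In either case,
\[
|P_t g(x)-P_t g(y)|\le \max(C,2)\,s^{\kappa'}\|g\|_{L^\infty(X,\mu)}=\max(C,2)\,\frac{d(x,y)^{\kappa'}}{t^{\kappa'/d_W}}\|g\|_{L^\infty(X,\mu)},
\]
which is exactly $wBE(\kappa')$ with constant $\max(C,2)$.

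There is essentially no obstacle here; this is a soft interpolation between the hypothesis at exponent $\kappa$ and the trivial $L^\infty$-contraction bound. The only point requiring a moment's care is that the direction of the elementary inequality between $s^\kappa$ and $s^{\kappa'}$ reverses at $s=1$, which is precisely why the threshold $d(x,y)=t^{1/d_W}$ is the natural dividing line and why the contraction bound is exactly what is needed to control the large-$s$ regime. The degenerate case $x=y$ is trivial.
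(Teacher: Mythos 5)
Your proof is correct and follows essentially the same route as the paper: split at the threshold $d(x,y)=t^{1/d_W}$, use the monotonicity $s^\kappa\le s^{\kappa'}$ for $s\le 1$ together with the $wBE(\kappa)$ hypothesis, and use the $L^\infty$-contraction property of $P_t$ in the regime $d(x,y)>t^{1/d_W}$. The only cosmetic difference is your explicit introduction of the scale-invariant variable $s$, which the paper handles implicitly.
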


\begin{proof}
Since $\bigl(t^{-1/d_W}d(x,y)\bigr)^\kappa\leq\bigl(t^{-1/d_W}d(x,y)\bigr)^{\kappa'}$ if $d(x,y)\leq t^{1/d_W}$, it suffices to consider $d(x,y)>t^{1/d_W}$, for which the result follows from the estimate
\begin{equation*}
| P_t g (x)-P_tg(y)| \le 2 \| P_t g \|_{ L^\infty(X,\mu)}\le 2 \| g \|_{ L^\infty(X,\mu)} \le 2 \frac{d(x,y)^{\kappa'}}{t^{\kappa'/d_W}} \| g \|_{ L^\infty(X,\mu)}.\qedhere
\end{equation*}
\end{proof}

The weak Bakry-\'Emery condition is also related to the H\"older regularity of the heat kernel.

\begin{lemma}\label{Holder kernel}
Assume that $(X,\mu,\mathcal{E},\mathcal{F})$ satisfies the weak Bakry-\'Emery condition $wBE(\kappa)$. Then, there exists a constant $C>0$ such that for every $t>0$, $x,y,z \in X$,
\begin{equation}\label{eqn:wBEimpliesptHolder}
| p_t(x,z)-p_t(y,z)| \le C \frac{d(x,y)^\kappa}{t^{(\kappa+d_H)/d_W}}.
\end{equation}
\end{lemma}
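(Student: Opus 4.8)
The plan is to exploit the semigroup (Chapman--Kolmogorov) identity together with the fact that $wBE(\kappa)$ controls spatial oscillation of $P_t g$ for bounded $g$, applied to the cleverly chosen test function $g = p_{t/2}(\cdot, z)$. Concretely, I would first split the time parameter as $t = t/2 + t/2$ and use the semigroup property to write, for fixed $z$,
\[
p_t(x,z) = \int_X p_{t/2}(x,w)\, p_{t/2}(w,z)\, d\mu(w) = P_{t/2}\bigl[p_{t/2}(\cdot,z)\bigr](x),
\]
and likewise with $y$ in place of $x$. Subtracting, the quantity to be estimated becomes the spatial increment $\bigl|P_{t/2}g(x) - P_{t/2}g(y)\bigr|$ of the function $g := p_{t/2}(\cdot,z)$, which is precisely the object that the weak Bakry-\'Emery inequality~\eqref{WBECD} is designed to control.

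The second step is to verify that $g$ is admissible for~\eqref{WBECD}, i.e.\ that $g \in L^\infty(X,\mu)$, and to compute its norm. Here I would invoke the sub-Gaussian upper bound in~\eqref{eq:subGauss-upper}: since the exponential factor is at most $1$, one has $p_{t/2}(w,z) \le c_3 (t/2)^{-d_H/d_W}$ for a.e.\ $w$, and because the heat kernel is continuous this bound holds everywhere, so
\[
\| g \|_{L^\infty(X,\mu)} = \sup_{w\in X} p_{t/2}(w,z) \le c_3\, (t/2)^{-d_H/d_W}.
\]

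The final step is to combine these ingredients. Applying~\eqref{WBECD} with time $t/2$ to $g$ gives
\[
\bigl|p_t(x,z)-p_t(y,z)\bigr| = \bigl|P_{t/2}g(x)-P_{t/2}g(y)\bigr| \le C\,\frac{d(x,y)^\kappa}{(t/2)^{\kappa/d_W}}\,\| g \|_{L^\infty(X,\mu)} \le C'\,\frac{d(x,y)^\kappa}{t^{(\kappa+d_H)/d_W}},
\]
where the factors $(1/2)^{-\kappa/d_W}$ and $c_3$ are absorbed into the new constant $C'$, completing the proof.

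I do not anticipate a serious obstacle: the argument is essentially a one-line application of $wBE(\kappa)$ to a heat-kernel-valued test function. The only points requiring mild care are the justification that the continuous version of $p_{t/2}(\cdot,z)$ realizes its supremum bound (so that the essential supremum defining the $L^\infty$ norm matches the pointwise bound from~\eqref{eq:subGauss-upper}), and bookkeeping of the harmless dimensional constants arising from the time-splitting $t=t/2+t/2$.
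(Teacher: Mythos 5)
Your proof is correct and is essentially the paper's argument: both rest on the semigroup identity $p_t(x,z)=\int_X p_{t/2}(x,u)\,p_{t/2}(u,z)\,d\mu(u)$, the on-diagonal bound $p_{t/2}(\cdot,z)\le c_3(t/2)^{-d_H/d_W}$ from~\eqref{eq:subGauss-upper}, and $wBE(\kappa)$ at time $t/2$. The only cosmetic difference is that the paper invokes the equivalent $L^1$-kernel form of $wBE(\kappa)$, namely $\int_X |p_{t/2}(x,u)-p_{t/2}(y,u)|\,d\mu(u)\le C\,d(x,y)^\kappa t^{-\kappa/d_W}$, whereas you apply~\eqref{WBECD} directly to the bounded test function $g=p_{t/2}(\cdot,z)$ --- the same computation read through duality.
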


\begin{proof}
The weak Bakry-\'Emery estimate is equivalent to
\[
\int_X | p_t(x,z)-p_t(y,z)| d\mu(z)\le C \frac{d(x,y)^\kappa}{t^{\kappa/d_W}},
\]
so the result follows by computing
\begin{align*}
| p_t(x,z)-p_t(y,z)| &=\left| \int_X (p_{t/2}(x,u)-p_{t/2}(y,u)) p_{t/2}(u,z) d\mu(u)\right| \\
 &\le \frac{C}{t^{d_H/d_W}} \int_X | p_{t/2}(x,u)-p_{t/2}(y,u)| d\mu(u).\qedhere
\end{align*}
\end{proof}

\begin{remark}
Note that if \eqref{eq:subGauss-upper} holds, then there always exists $\kappa\in(0,1)$ such that~\eqref{eqn:wBEimpliesptHolder} holds for every $t>0$, $\mu$-a.e. $x,y,z \in X$, see for instance~\cite[Proposition 4.5]{Coulhon03}, or~\cite[Section 5.3]{HSC01}. However one should not expect \cite{Coulhon03,HSC01} to give the optimal $\kappa$.
\end{remark}

\begin{remark}
Suppose $(X,d,\mu)$ is Ahlfors $d_H$-regular and the heat kernel satisfies the sub-Gaussian upper bound in~\eqref{eq:subGauss-upper}. Then validity of the weak Bakry-\'Emery curvature condition~\eqref{WBECD} implies the sub-Gaussian {\bfseries lower bound} for the heat kernel in~\eqref{eq:subGauss-upper} for every $t>0$, $\mu$-a.e. $x,y \in X$.
This is established using the proof of~\cite[Theorem 3.1]{Coulhon03}, for which one needs to know that $p_t(x,y)\ge ct^{-d_H/d_W}$ on $d(x,y)<at^{1/d_W}$ for some $a>0$. 
In order to validate the latter, we first note that the Ahlfors $d_H$-regularity and heat kernel upper bound imply that for any $t>0$ and $\mu$-a.e. $x\in M$, we have $p_t(x,x)\ge ct^{-d_H/d_W}$. Indeed, this can be obtained by applying the conservativeness property of the heat semigroup and Jensen's inequality. Furthermore,  the weak Bakry-\'Emery condition gives~\eqref{eqn:wBEimpliesptHolder} from which we obtain a suitable $a>0$ via the computation:
\[
 | p_t(x,x)-p_t(y,x)| \le C \frac{d(x,y)^\kappa}{t^{(\kappa+d_H)/d_W}} \le C \frac{(a t^{1/d_W})^\kappa}{t^{\kappa/d_W}} p_t(x,x) \le \frac12 p_t(x,x).
\]
\end{remark}

\subsection{Fractional metric spaces satisfying $wBE(\kappa)$}\label{subsec-frmms}
Recall from Section~\ref{ssec:Barlowspaces} that if $X$ is geodesic then our assumptions put us in Barlow's class of fractional metric spaces with fractional diffusions. Assuming this, and that the dimension of the state space is small compared to the walk dimension, specifically that $d_H<d_W$, we can use Barlow's results to obtain a weak Bakry-\'Emery inequality.  Specifically,~\cite[Theorem 3.40]{Ba98} says that then  for any $\lambda>0$, $x,y\in X$ and $f\in L^\infty(X,\mu)$,
\begin{equation}\label{E:FD_resolvent_estimate}
|U_\lambda f(x)-U_\lambda f(y)|\leq C\lambda^{-\frac{d_H}{d_W}}d(x,y)^{d_W-d_H}\|f\|_{L^\infty(X,\mu)},
\end{equation}
where $\{U_\lambda\}_{\lambda>0}$ is the resolvent associated with $P_t$.  The latter may be written as $(L-\lambda)^{-1}$ using the generator $L$ for which $P_t=e^{tL}$.  From this we deduce $wBE(d_W-d_H)$ as follows.

\begin{theorem}\label{thm-xBE-FD}
A fractional metric space $(X,d,\mu)$ for which  $1\leq d_H <d_W$
 satisfies $wBE(\kappa)$ with $\kappa=d_W-d_H$.
\end{theorem}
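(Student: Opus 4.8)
The plan is to deduce the pointwise regularization \eqref{WBECD} for $P_t$ from the resolvent estimate \eqref{E:FD_resolvent_estimate} by first controlling the $\kappa$-Hölder seminorm of the \emph{time derivative} $\partial_t P_t g = L P_t g$ and then integrating in $t$. Throughout write $[u]_\kappa := \sup_{x\neq y}|u(x)-u(y)|/d(x,y)^\kappa$ for the $\kappa$-Hölder seminorm, so that (recalling $\kappa=d_W-d_H$) the hypothesis \eqref{E:FD_resolvent_estimate} reads $[U_\lambda f]_\kappa \le C\lambda^{-d_H/d_W}\|f\|_{L^\infty(X,\mu)}$, while the target $wBE(\kappa)$ is exactly $[P_t g]_\kappa \le C t^{-\kappa/d_W}\|g\|_{L^\infty(X,\mu)}$. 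The point is that the resolvent \emph{smooths} by $\kappa$ Hölder derivatives at a cost $\lambda^{-d_H/d_W}$, and spending this regularization on $L P_t$ at the natural scale $\lambda\simeq t^{-1}$ reproduces the correct scaling precisely when $\kappa+d_H=d_W$.

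The heart of the argument is the bound
\[
[L P_t g]_\kappa \le C\, t^{-1-\kappa/d_W}\,\|g\|_{L^\infty(X,\mu)}, \qquad t>0.
\]
To prove it I would feed $LP_t g$ into \eqref{E:FD_resolvent_estimate}. Since $P_t$ maps $L^\infty(X,\mu)$ into the domain of every power of $L$ and $U_\lambda(\lambda-L)$ is the identity on $\mathbf{dom}(L)$, one has the identity $L P_t g = U_\lambda\big(\lambda L P_t g - L^2 P_t g\big)$. Applying the resolvent estimate with $f=\lambda L P_t g - L^2 P_t g$ gives
\[
[L P_t g]_\kappa = [U_\lambda f]_\kappa \le C\lambda^{-d_H/d_W}\big(\lambda\|L P_t g\|_{L^\infty}+\|L^2 P_t g\|_{L^\infty}\big).
\]
Here I invoke analyticity of the heat semigroup: the sub-Gaussian bounds \eqref{eq:subGauss-upper} self-improve to time-derivative kernel estimates yielding $\|L^k P_t\|_{L^\infty\to L^\infty}\le C_k t^{-k}$ for $k=1,2$. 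Consequently
\[
[L P_t g]_\kappa \le C\lambda^{-d_H/d_W}\Big(\tfrac{\lambda}{t}+\tfrac{1}{t^2}\Big)\|g\|_{L^\infty}
= C\big(t^{-1}\lambda^{\kappa/d_W}+t^{-2}\lambda^{-d_H/d_W}\big)\|g\|_{L^\infty}.
\]
Choosing $\lambda = 1/t$ balances the two terms exactly because $\kappa+d_H=d_W$, and both equal $t^{-1-\kappa/d_W}$; this is where the value $\kappa=d_W-d_H$ is forced.

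It remains to integrate the bound just established. For $0<t<T$ one has $P_T g - P_t g = \int_t^T L P_\tau g\,d\tau$ in $L^\infty$, so by subadditivity of $[\cdot]_\kappa$,
\[
[P_t g]_\kappa \le [P_T g]_\kappa + \int_t^T [L P_\tau g]_\kappa\,d\tau
\le [P_T g]_\kappa + \frac{C d_W}{\kappa}\, t^{-\kappa/d_W}\|g\|_{L^\infty}.
\]
Letting $T\to\infty$ and using $[P_T g]_\kappa\to 0$ yields \eqref{WBECD}. The convergence $[P_T g]_\kappa\to 0$ is the one routine point needing care: the integrability of $\tau^{-1-\kappa/d_W}$ shows $(P_T g)_T$ is Cauchy in the seminorm $[\cdot]_\kappa$, its limit is a bounded $L$-harmonic function (as $\|LP_T g\|_{L^\infty}\le CT^{-1}\to0$), and such a function is constant by the Liouville property available on these spaces, so its seminorm vanishes.

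The main obstacle is not the resolvent manipulation, which is elementary, but securing its two analytic inputs: the $L^\infty\to L^\infty$ analyticity bounds $\|L^k P_t\|\le C_k t^{-k}$, which must be extracted from \eqref{eq:subGauss-upper} (standard, via analytic continuation of the semigroup into a complex time sector together with the sub-Gaussian bounds there), and the vanishing of $[P_T g]_\kappa$ as $T\to\infty$. Once these are in hand, the identity $L P_t g = U_\lambda(\lambda L P_t g - L^2 P_t g)$, the resolvent estimate, and optimization of $\lambda$ at the scale $t^{-1}$ deliver the Hölder decay of $\partial_t P_t$, and integrating in time produces $wBE(\kappa)$ with $\kappa=d_W-d_H$.
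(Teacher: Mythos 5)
Your central mechanism is the same one the paper uses: feed the resolvent estimate \eqref{E:FD_resolvent_estimate} a function built from $P_t$, bound the $L^\infty$ norms using the time-derivative (analyticity) bound $\|LP_t\|_{L^\infty\to L^\infty}\leq Ct^{-1}$ that comes from \cite[Corollary 5]{Da97} and \eqref{eq:subGauss-upper}, and choose $\lambda\simeq 1/t$, where the relation $\kappa+d_H=d_W$ makes the scaling close. But the paper does this in one step, applying the identity $P_tf=U_\lambda(L-\lambda)P_tf$ directly to $P_tf$ and estimating $\|(L-\lambda)P_tf\|_{L^\infty}\leq (C/t+\lambda)\|f\|_{L^\infty}$; this yields \eqref{WBECD} immediately, with only the first-order derivative bound needed. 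Your detour through $[LP_tg]_\kappa$ requires the second-order bound $\|L^2P_t\|_{L^\infty\to L^\infty}\leq Ct^{-2}$ (harmless, since $L^2P_t=(LP_{t/2})^2$) and, more seriously, a time integration that produces a boundary term at $T\to\infty$. Note that the very identity you write, $LP_tg=U_\lambda(\lambda LP_tg-L^2P_tg)$, applied instead to $P_tg$ itself, is the whole proof; the integration step buys you nothing and costs you the tail.

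The genuine gap is in disposing of that tail, $[P_Tg]_\kappa\to 0$. First, the inequality $[P_tg]_\kappa\leq [P_Tg]_\kappa+\int_t^T[LP_\tau g]_\kappa\,d\tau$ is vacuous unless $[P_Tg]_\kappa<\infty$ for some $T$, and your Cauchy argument only controls seminorms of \emph{differences} $[P_Tg-P_Sg]_\kappa$, which says nothing about finiteness of $[P_Tg]_\kappa$ itself; the natural way to get finiteness is the paper's one-line identity, at which point the detour is moot. Second, your route to $\ell:=\lim_T[P_Tg]_\kappa=0$ via a limit function leans on two unproved inputs: that a bounded locally uniform limit $u$ of (normalized) $P_Tg$ with $\|LP_Tg\|_{L^\infty}\to0$ is $L$-harmonic (a closedness statement for $L$ in a local $L^\infty$ sense that needs an argument), and a Liouville property for bounded harmonic functions on fractional metric spaces, which is not among the paper's assumptions or cited results. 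The Liouville property is in fact true here (it follows from the elliptic Harnack inequality, available under two-sided sub-Gaussian bounds), but as written it is asserted, not secured. A cleaner repair that avoids both issues: run your telescoping inequality \emph{pairwise}, for fixed $x,y$, where every term is finite, and kill the boundary term using the weaker regularization that sub-Gaussian bounds alone provide, namely $wBE(\kappa')$ for some $\kappa'\in(0,1)$ (see the remark following Lemma \ref{Holder kernel}, citing \cite{Coulhon03}), which gives $|P_Tg(x)-P_Tg(y)|\leq Cd(x,y)^{\kappa'}T^{-\kappa'/d_W}\|g\|_{L^\infty}\to0$ as $T\to\infty$. With that substitution your argument closes; without it, or without collapsing to the paper's direct identity, the proof as written is incomplete.
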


\begin{proof}
Let $f\in L^\infty(X,\mu)$ and $\lambda >0$. 
Since the heat semigroup solves the heat equation, we have $LP_t=\frac{\partial P_t}{\partial t}$, and so
\[
|(L-\lambda)P_tf(x)|
\leq \int_X\Big|\frac{\partial}{\partial t}p_t(x,y)-\lambda p_t(x,y)\Big|\,d\mu(y)\|f\|_{L^\infty(X,\mu)}.
\]
Applying~\cite[Corollary 5]{Da97} and the sub-Gaussian estimates in~\eqref{eq:subGauss-upper}, we obtain
\[
\Big|\frac{\partial}{\partial t}p_t(x,y)\Big|
\le  \frac{C}{ t}t^{-\frac{d_H}{d_W}}\exp\Big(-c\Big(\frac{d(x,y)^{d_W}}{t}\Big)^{\frac{1}{d_W-1}}\Big) 
\le  \frac{C}{ t} p_{ct}(x,y),
\]
where the two pairs of constants $c,C$ are different.
By the conservativeness property of the heat semigroup, i.e. $\int_X p_t (x,y) d\mu(y)=1$, we obtain
\[
|(L-\lambda)P_tf(x)|
\le \int_X  \Big(\frac{C}{t}p_{ct}(x,y)+\lambda p_t(x,y)\Big)\,d\mu(y)\|f\|_{L^\infty(X,\mu)}
\leq \Big(\frac{C}{t}+\lambda\Big)\|f\|_{L^\infty(X,\mu)}.
\]
Then, we note that $P_tf=U_\lambda(L-\lambda)P_tf$. Therefore,  in view of~\eqref{E:FD_resolvent_estimate}, for any $\lambda>0$ and $x,y\in X$ we have
\begin{align*}\label{E:FD_01}
|P_tf(x)-P_tf(y)| & \leq C\lambda^{-\frac{d_H}{d_W}}d(x,y)^{d_W-d_H}\|(L-\lambda)P_tf\|_{L^\infty(X,\mu)} \\
 & \leq C\lambda^{-\frac{d_H}{d_W}}d(x,y)^{d_W-d_H}  \Big(\frac{C}{t}+\lambda\Big)\|f\|_{L^\infty(X,\mu)}. \notag
\end{align*}
So choosing $\lambda=C/t$, it follows  that
\begin{equation*}
|P_tf(x)-P_tf(y)|
\le C t^{-\frac{d_W-d_H}{d_W}}d(x,y)^{d_W-d_H}\|f\|_{L^\infty(X,\mu)}.\qedhere
\end{equation*}
\end{proof}

We remark that the proof of~\eqref{E:FD_resolvent_estimate} uses the geodesic property in two places.  In one, the proof of \cite[Corollary~3.38]{Ba98}, one can use Ahlfors regularity instead, but its use in the proof of~\cite[Lemma~3.9]{Ba98} is more subtle.  More significantly, we will see that the weak Bakry-\'Emery estimate $wBE(d_W-d_H)$ is optimal for finitely ramified sets like the Sierpinski gasket or Vicsek set  in the sense that $wBE(\kappa)$ is not satisfied if $\kappa >d_W-d_H$.  However we believe it is not optimal for the Sierpinski carpet, see Conjecture~\ref{conj-SC}.  In view of the discussion in Section~\ref{S:critical_exp_local}, the Vicsek set may be particularly interesting because $\kappa=1$ and the Vicsek set is a dendrite, see \cite{KigamiDendrites,T08}.

\subsection{Stability of $wBE(\kappa)$ by tensorization}\label{subsec-products}

The condition $wBE(\kappa)$ is stable under tensor powers. This yields many examples, and is important in analysis on fractals, see~\cite{IRS13,St09p,St07p,St05p}.

Let $(X, d,\mu)$ satisfy the assumptions \ref{A1} and \ref{sGKHE}.
Consider $(X^n, d_{X^n},\mu^{\otimes n})$, where the metric $ d_{X^n}$ is defined as follows: for any $\mathbf x=(x_1, x_2,\cdots, x_n) \in X^n$ and $\mathbf x'=(x_1', x_2',\cdots, x_n')\in X^n$,
\[
d_{X^n}(\mathbf x, \mathbf x')^{\frac{d_W}{d_W-1}}=\sum_{i=1}^n d_X(x_i,x_i')^{\frac{d_W}{d_W-1}}.
\]
Then $(X^n, d_{X^n},\mu^{\otimes n})$ is Ahlfors $nd_H$-regular and the heat kernel on $X^n$ given by
\[
p_t^{X^n}(\mathbf x, \mathbf y)=p_t(x_1, y_1)\cdots p_t(x_n, y_n)
\]
satisfies the sub-Gaussian bounds~\eqref{eq:subGauss-upper} with $d(x,y)$ replaced by $d(\mathbf x, \mathbf y)$.

\begin{proposition}\label{tensorization}
If $(X,d,\mu)$ satisfies $wBE(\kappa)$, then $(X^n, d_{X^n},\mu^{\otimes n})$ also satisfies $wBE(\kappa)$.
\end{proposition}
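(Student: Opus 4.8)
The plan is to reduce the tensor-power statement to the one-dimensional $wBE(\kappa)$ hypothesis by exploiting the product structure of the heat semigroup on $X^n$. Since $p_t^{X^n}(\mathbf{x},\mathbf{y})=\prod_{i=1}^n p_t(x_i,y_i)$, the semigroup factorizes as $P_t^{X^n}=P_t\otimes\cdots\otimes P_t$, so I would estimate $|P_t^{X^n}g(\mathbf{x})-P_t^{X^n}g(\mathbf{x}')|$ for $g\in L^\infty(X^n,\mu^{\otimes n})$ and two points $\mathbf{x}=(x_1,\dots,x_n)$, $\mathbf{x}'=(x_1',\dots,x_n')$. The standard device is to change one coordinate at a time: introduce the hybrid points $\mathbf{z}^{(k)}=(x_1',\dots,x_k',x_{k+1},\dots,x_n)$ for $k=0,\dots,n$, so that $\mathbf{z}^{(0)}=\mathbf{x}$ and $\mathbf{z}^{(n)}=\mathbf{x}'$, and telescope
\[
P_t^{X^n}g(\mathbf{x})-P_t^{X^n}g(\mathbf{x}')=\sum_{k=1}^n\bigl(P_t^{X^n}g(\mathbf{z}^{(k-1)})-P_t^{X^n}g(\mathbf{z}^{(k)})\bigr).
\]
Each consecutive difference changes only the $k$-th coordinate from $x_k$ to $x_k'$, with all other coordinates held fixed.

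For a fixed term in the telescoping sum, I would integrate out the $n-1$ frozen coordinates first. Because the kernel factorizes, the map $u\mapsto G_k(u):=\int_{X^{n-1}} g(x_1',\dots,x_{k-1}',u,x_{k+1},\dots,x_n)\prod_{i\neq k}p_t(z_i,y_i)\,d\mu^{\otimes(n-1)}$ lands us, after applying the remaining single-variable heat operator in the $k$-th slot, in exactly the one-dimensional setting: the $k$-th difference equals $P_t h_k(x_k)-P_t h_k(x_k')$ for a suitable $h_k\in L^\infty(X,\mu)$ with $\|h_k\|_{L^\infty}\le\|g\|_{L^\infty(X^n,\mu^{\otimes n})}$, using that $P_t\mathbf{1}=\mathbf{1}$ (conservativeness) to bound the integrated-out factors. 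The one-dimensional $wBE(\kappa)$ hypothesis then gives
\[
|P_t^{X^n}g(\mathbf{z}^{(k-1)})-P_t^{X^n}g(\mathbf{z}^{(k)})|\le C\,\frac{d_X(x_k,x_k')^\kappa}{t^{\kappa/d_W}}\,\|g\|_{L^\infty(X^n,\mu^{\otimes n})}.
\]

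Summing over $k$ yields a bound proportional to $t^{-\kappa/d_W}\sum_{k=1}^n d_X(x_k,x_k')^\kappa$, so the final step is to compare $\sum_k d_X(x_k,x_k')^\kappa$ with $d_{X^n}(\mathbf{x},\mathbf{x}')^\kappa$. By definition $d_{X^n}(\mathbf{x},\mathbf{x}')^{d_W/(d_W-1)}=\sum_i d_X(x_i,x_i')^{d_W/(d_W-1)}$, so each coordinate distance satisfies $d_X(x_k,x_k')\le d_{X^n}(\mathbf{x},\mathbf{x}')$, whence $\sum_k d_X(x_k,x_k')^\kappa\le n\,d_{X^n}(\mathbf{x},\mathbf{x}')^\kappa$, absorbing the factor $n$ into the constant. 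This closes the estimate and establishes $wBE(\kappa)$ on $X^n$ with a new constant $Cn$.

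The main obstacle I anticipate is making the coordinate-freezing argument fully rigorous, namely verifying that the partially-integrated function $h_k$ genuinely belongs to $L^\infty(X,\mu)$ with the claimed norm bound and that Fubini applies so the product kernel can be peeled off one factor at a time; the conservativeness $P_t\mathbf{1}=\mathbf{1}$ is exactly what guarantees the frozen factors integrate to something bounded by $1$ rather than growing. The final distance comparison is a soft consequence of the definition of $d_{X^n}$ and should cause no difficulty, since only the elementary inequality $d_X(x_k,x_k')\le d_{X^n}(\mathbf{x},\mathbf{x}')$ is needed.
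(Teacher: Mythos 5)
Your proposal is correct and takes essentially the same route as the paper's proof: telescope over the coordinates via hybrid points, observe that each single-coordinate difference is $P_t h_k(x_k)-P_t h_k(x_k')$ for the partially integrated function $h_k$ with $\|h_k\|_{L^\infty(X,\mu)}\le\|g\|_{L^\infty(X^n,\mu^{\otimes n})}$, apply the one-dimensional $wBE(\kappa)$, and then compare $\sum_k d_X(x_k,x_k')^\kappa$ with $d_{X^n}(\mathbf{x},\mathbf{x}')^\kappa$. The only (immaterial) difference is in the last step, where the paper absorbs $\sum_k d(x_k,x_k')^\kappa\le C\, d_{X^n}(\mathbf{x},\mathbf{x}')^\kappa$ with a constant independent of $n$ when $\kappa\ge 1$, while your cruder coordinatewise bound yields a factor $n$ in the constant, which the statement permits.
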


\begin{proof}
Let $f\in L^{\infty}(X^n, \mu^{\otimes n})$.   Given $\mathbf x, \mathbf x'\in X^n$ we may estimate the change in $P_t^{X^n}f$ due to changing a single component of $\mathbf x$ using $wBE(\kappa)$ on $X$:
\begin{align*}
&\bigl|P_t^{X^n} f(x_1', \cdots,x_{i-1}',x_{i},\cdots, x_n)-P_t^{X^n} f(x_1', \cdots,x_i',x_{i+1},\cdots, x_n)\bigr|
\\ &\le 
\bigl|P_t(P_t^{X^{n-1}} f(x_1', \cdots,x_{i-1}',\cdot,x_{i+1},\cdots, x_n))(x_i)-P_t(P_t^{X^{n-1}} f(x_1', \cdots,x_{i-1}',\cdot,x_{i+1},\cdots, x_n))(x_i')\bigr|
\\ &\le
C \frac{d(x_i,x_i')^{\kappa}}{t^{\kappa/d_W}} \left\|P_t^{X^{n-1}} f(x_1', \cdots,x_{i-1}',\cdot,x_{i+1},\cdots, x_n)\right\|_{L^{\infty}(X,\mu)} \leq C \frac{d(x_i,x_i')^{\kappa}}{t^{\kappa/d_W}} \|f\|_{L^{\infty}(X^n)}.
\end{align*}
Bounding the change in each component in this manner and summing over components yield
\[
\bigl|P_t^{X^n} f(\mathbf x)-P_t^{X^n} f(\mathbf x')\bigr|
\le C\frac{1}{t^{\kappa/d_W}} \brak{\sum_{i=1}^n d(x_i,x_i')^{\kappa}} \|f\|_{L^{\infty}(X^n)}
\le
C \frac{d_{X^n}(\mathbf x,\mathbf x')^{\kappa}}{t^{\kappa/d_W}} \|f\|_{L^{\infty}(X^n)},
\]
where $C$ is independent of $n$ if $\kappa\geq1$.
\end{proof}

\subsection{Continuity of $P_t$ in $\B^{p,\alpha}$ for $p \ge 2$ and pseudo-Poincar\'e inequalities}

In this section, we study the continuity of the heat semigroup in the Besov spaces in the range $p \ge 2$. This complements the results of \cite[Section 5.1]{ABCRST1} which treated the case $1<p\le 2$ without the weak Bakry-\'Emery estimate~\eqref{WBECD}.
We also obtain the pseudo-Poincar\'e inequalities in Proposition~\ref{pseudo poincare}.  These are a key technical tool in our work, especially in the case $p=1$ which  is an analog of~\cite[Lemma 4.3]{ABCRST2} in the sub-Gaussian setting.

In what follows we use the following notation.  For $1\leq p<\infty$ let
\begin{equation}\label{eq:defnofbetap}
	\beta_p= \Bigl(1-\frac{2}{p}\Bigr)\frac{\kappa}{d_W}+\frac{1}{p}
\end{equation}
and note that the restriction $0<\kappa<d_W$ in~\eqref{WBECD} implies $0<\beta_p<1$.

\begin{theorem}\label{P:PtinBesovp4}
 If $(X,\mu,\mathcal{E},\mathcal{F})$ satisfies the weak Bakry-\'Emery condition $wBE(\kappa)$ and $2\leq p<\infty$ then there is  $C>0$ such that for every $t>0$ and $f\in L^p(X,\mu)$
\[
\| P_t f \|_{p,\beta_p} \le \frac{C}{t^{ \beta_p}}\| f\|_{L^p(X,\mu)}.
\]
In particular, for $t>0$, $P_t: L^p(X,\mu) \to \mathbf{B}^{p,\beta_p}(X)$ is bounded.
\end{theorem}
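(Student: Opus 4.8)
The plan is to prove the estimate by complex (Riesz--Thorin) interpolation applied, for each fixed scale $r>0$, to the linear operator
\[
T_r f(x,y):=\bigl(P_tf(x)-P_tf(y)\bigr)\,\mathbf 1_{\{d(x,y)<r\}},
\]
regarded as a map $L^q(X,\mu)\to L^q\bigl(X\times X,\mu\otimes\mu\bigr)$. By Theorem~\ref{Besov characterization} it is enough to bound $\sup_{r>0}N_p^{d_W\beta_p}(P_tf,r)$, and since $N_p^{\alpha}(P_tf,r)=r^{-\alpha-d_H/p}\,\|T_rf\|_{L^p(\mu\otimes\mu)}$, the whole problem reduces to controlling $\|T_rf\|_{L^p(\mu\otimes\mu)}$ with the correct powers of $r$ and $t$. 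I would interpolate between the endpoints $q=2$ and $q=\infty$, and the point is that $\beta_p$ is exactly the linear interpolant, in $1/q$, of the two endpoint exponents.

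For the $L^\infty$ endpoint I would simply invoke $wBE(\kappa)$: on the support of $T_rf$ one has $d(x,y)<r$, so \eqref{WBECD} gives $\|T_rf\|_{L^\infty(\mu\otimes\mu)}\le C r^\kappa t^{-\kappa/d_W}\|f\|_{L^\infty(X,\mu)}$. For the $L^2$ endpoint I would use spectral calculus, which requires no curvature hypothesis: expanding the square and using conservativeness, $\iint|P_tf(x)-P_tf(y)|^2 p_s\,d\mu\,d\mu = 2\langle P_tf,(I-P_s)P_tf\rangle$, and then the elementary inequality $s^{-1}(1-e^{-s\lambda})e^{-2t\lambda}\le \lambda e^{-2t\lambda}\le (2et)^{-1}$ yields $\|P_tf\|_{2,1/2}\le C t^{-1/2}\|f\|_{L^2(X,\mu)}$ (this is the $p=2$ case, essentially~\cite[Section~5.1]{ABCRST1}). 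Combining this with Theorem~\ref{Besov characterization} in the form $N_2^{d_W/2}(P_tf,r)\le C\|P_tf\|_{2,1/2}$ gives
\[
\|T_rf\|_{L^2(\mu\otimes\mu)}=r^{(d_W+d_H)/2}\,N_2^{d_W/2}(P_tf,r)\le C\, r^{(d_W+d_H)/2}\,t^{-1/2}\,\|f\|_{L^2(X,\mu)}.
\]

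Interpolating the two endpoint bounds for $T_r$ with $\theta=1-\tfrac2p$ (so that $\tfrac1p=\tfrac{1-\theta}2$) produces
\[
\|T_rf\|_{L^p(\mu\otimes\mu)}\le C\, r^{(d_W+d_H)/p+(1-2/p)\kappa}\,t^{-\beta_p}\,\|f\|_{L^p(X,\mu)},
\]
where the exponent of $t$ equals $(1-\theta)\tfrac12+\theta\tfrac{\kappa}{d_W}=\tfrac1p+(1-\tfrac2p)\tfrac{\kappa}{d_W}=\beta_p$ by \eqref{eq:defnofbetap}. Multiplying by $r^{-\alpha-d_H/p}$ with the Besov-critical choice $\alpha=d_W\beta_p=\tfrac{d_W}p+(1-\tfrac2p)\kappa$ makes the power of $r$ cancel exactly, so $N_p^{d_W\beta_p}(P_tf,r)\le C t^{-\beta_p}\|f\|_{L^p(X,\mu)}$ uniformly in $r$; taking the supremum over $r$ and applying Theorem~\ref{Besov characterization} once more gives $\|P_tf\|_{p,\beta_p}\le C t^{-\beta_p}\|f\|_{L^p(X,\mu)}$, hence the boundedness of $P_t\colon L^p(X,\mu)\to\B^{p,\beta_p}(X)$.

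I expect the main technical point to be the $L^2$ endpoint in its \emph{unweighted} form on $\{d(x,y)<r\}$: the spectral argument naturally yields the heat-kernel-weighted integral $\iint|P_tf(x)-P_tf(y)|^2 p_s$, and one must pass to the flat integral over $\{d(x,y)<r\}$. This is exactly where Theorem~\ref{Besov characterization} (equivalently, the lower bound in \eqref{eq:subGauss-upper}, which yields $p_{r^{d_W}}(x,y)\ge c\,r^{-d_H}$ whenever $d(x,y)<r$) is needed. A secondary point to verify is the exponent bookkeeping, namely that $\beta_p$ is the linear interpolant of the two endpoint $t$-exponents and that the $r$-exponent cancels for $\alpha=d_W\beta_p$; both are immediate from \eqref{eq:defnofbetap}. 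I would emphasize that $wBE(\kappa)$ enters only through the $L^\infty$ endpoint, so the argument works for every $0<\kappa<d_W$ and every $2\le p<\infty$, and in particular avoids the loss one incurs by trying to estimate $\iint|P_tf(x)-P_tf(y)|^p p_s$ directly through a single pointwise Hölder bound on $P_tf$.
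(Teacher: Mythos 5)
Your proof is correct, and it shares the paper's skeleton---Riesz--Thorin interpolation between a curvature-free $L^2$ smoothing bound and an $L^\infty$ bound coming from $wBE(\kappa)$, with $\beta_p$ appearing as the linear interpolant of the endpoint $t$-exponents---but the implementation is genuinely different. The paper interpolates the single operator $\mathcal{P}_t f(x,y)=P_tf(x)-P_tf(y)$ as a map $L^q(X,\mu)\to L^q(X\times X, p_s\,\mu\otimes\mu)$ for each fixed heat time $s$, working directly with the heat-kernel-weighted seminorm \eqref{eq:defnofheatbesovnorm}; this forces it to establish the weighted $L^\infty$ endpoint indirectly, via \eqref{eq:boundpolyptbypct} and a $q\to\infty$ limit over compact exhaustions (needed because $p_s\,\mu\otimes\mu$ is only finite on $K\times X$). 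You instead interpolate the truncated operator $T_rf=(P_tf(x)-P_tf(y))\mathbf{1}_{\{d(x,y)<r\}}$ on the flat measure $\mu\otimes\mu$ for each fixed spatial scale $r$, which makes the $L^\infty$ endpoint a one-line pointwise consequence of \eqref{WBECD} with no exhaustion argument, at the price of invoking Theorem~\ref{Besov characterization} in both directions: the lower-bound direction to convert the spectral estimate $\|P_tf\|_{2,1/2}\le Ct^{-1/2}\|f\|_{L^2(X,\mu)}$ into the unweighted bound $\|T_rf\|_{L^2(\mu\otimes\mu)}\le Cr^{(d_W+d_H)/2}t^{-1/2}\|f\|_{L^2(X,\mu)}$, and the upper-bound direction (the equivalence $\|f\|_{p,\alpha/d_W}\simeq\sup_{s>0}N_p^\alpha(f,s)$) to return from $\sup_r N_p^{d_W\beta_p}(P_tf,r)$ to the Besov seminorm. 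Your exponent bookkeeping is right: with $\theta=1-2/p$ the $t$-exponent is $(1-\theta)/2+\theta\kappa/d_W=\beta_p$ by \eqref{eq:defnofbetap}, and the $r$-exponent $(d_W+d_H)/p+(1-2/p)\kappa$ cancels exactly against $r^{-d_W\beta_p-d_H/p}$ from \eqref{eq:Besov-seminorm-r}, so the bound is uniform in $r$ as required. Neither route is cheaper on hypotheses---your $L^2$ endpoint uses the lower sub-Gaussian bound through Theorem~\ref{Besov characterization}, while the paper's $L^\infty$ endpoint uses it through \eqref{eq:boundpolyptbypct}---but your version localizes the curvature assumption more transparently and avoids the weighted-measure technicalities, whereas the paper's version works entirely within the heat-semigroup seminorm and so never needs the metric characterization at all.
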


\begin{proof}

For $r >0$ consider the measure on $X \times X$ given by $d\nu_r (x,y) =1_{d(x,y)<r} d\mu(x) \otimes d\mu(y)$.

From the sub-Gaussian heat kernel lower bound \eqref{eq:subGauss-upper}, we have 
\begin{align*}
\int_X \int_X (P_t f (x) -P_t f (y))^2 d\nu_r (x,y)&  = \int_X \int_{B(x,r)} (P_t f (x) -P_t f (y))^2 d\mu (y) d\mu(x) \\
 & \le C r^{d_H} \int_X \int_{X} p_{r^{d_W}}(x,y) (P_t f (x) -P_t f (y))^2 d\mu (y) d\mu(x).
\end{align*}

 It was proved in~\cite[Proposition~4.6]{ABCRST1} that $\| f\|_{2,1/2}^2=2\mathcal{E}(f,f)$.  Applying this to $P_t f$ we conclude from standard estimates that $\|P_t f\|_{2,1/2}\leq C t^{-1/2}\|f\|_{L^2(X,\mu)}$ (see also ~\cite[Theorem 5.1]{ABCRST1}). This is equivalent to the following bound, valid for all $s>0$:
\begin{equation*}
	\int_X\int_X p_s(x,y)(P_t f(x)-P_tf(y))^2 d\mu(x)\,d\mu(y)
	\leq C\frac{s}{t} \|f\|^2_{L^2(X,\mu)}.
	\end{equation*}
Therefore,
\begin{align}\label{eq1}
\int_X \int_X (P_t f (x) -P_t f (y))^2 d\nu_r (x,y) \le  C \frac{r^{d_H+d_W}}{t}  \|f\|^2_{L^2(X,\mu)}.
\end{align}

We now consider the case where $f \in L^\infty(X,\mu)$. From the weak Bakry-\'Emery estimate we have for  $\nu_r$ almost every $x,y \in X$
\begin{align}\label{eq2}
|P_tf(x)-P_tf(y)|  \le C \frac{r^\kappa}{t^{\kappa/d_W}}  \| f \|_{L^\infty(X,\mu)}.
\end{align}

Consider the map $\mathcal{P}_t$ defined by $\mathcal{P}_t f (x,y)= P_t f (x) -P_t f (y)$.  The estimate \eqref{eq1} says that $\mathcal{P}_t: L^2(X,\mu)\to L^2(X \times X, \nu_r)$ is bounded by  $ C \frac{r^{(d_H+d_W)/2}}{\sqrt{t}} $. The estimate \eqref{eq2} says that $\mathcal{P}_t: L^\infty(X,\mu)\to L^\infty(X \times X, \nu_r)$ is bounded by $ C \frac{r^\kappa}{t^{\kappa/d_W}}$. From the Riesz-Thorin interpolation theorem, we deduce that if $2 \le p <+\infty$, $\mathcal{P}_t: L^p(X,\mu)\to L^p(X \times X, \nu_r)$ is bounded with 
\[
\| \mathcal{P}_t \|_{L^p(X,\mu)\to L^p(X \times X, \nu_r)}\le C \frac{r^{ \kappa \left(1-\frac{2}{p}\right)+\frac{1}{p} (d_H+d_W)}}{ t^{\beta_p}}.
\]
This can be rewritten as
\[
\int_X \int_{B(x,r)} |P_t f (x) -P_t f (y)|^p d\mu (y) d\mu(x)  \le C \frac{r^{ \kappa\left(p-2\right)+d_H+d_W}}{ t^{p\beta_p}} \|f \|^p_{L^p(X,\mu)}.
\]
We deduce that
\[
\sup_{r >0} \frac{1}{r^{ \kappa\left(p-2\right)+d_H+d_W}} \int_X \int_{B(x,r)} |P_t f (x) -P_t f (y)|^p d\mu (y) d\mu(x)  \le  \frac{C}{ t^{p\beta_p}} \|f \|^p_{L^p(X,\mu)}
\]
and conclude from Theorem \ref{Besov characterization}.
\end{proof}

Among our main applications of the weak Bakry-\'Emery condition $wBE(\kappa)$ are the following pseudo-Poincar\'e inequalities for the semigroup when $1\leq p\leq2$.  These complement those for $p\geq2$ which were obtained without $wBE(\kappa)$ in~\cite[Proposition~5.3]{ABCRST1}.

\begin{proposition}[Pseudo-Poincar\'e inequalities]\label{pseudo poincare}
Let $1\leq p\leq 2$.  If $(X,\mu,\mathcal{E},\mathcal{F})$ satisfies the weak Bakry-\'Emery condition $wBE(\kappa)$, then there is $C>0$ such that for every $f \in L^p(X,\mu)$, and $t \ge 0$,
\begin{equation*}
\| P_t f -f \|_{L^p(X,\mu)} \le C t^{\beta_p} \liminf_{\tau \to 0^+}\frac{1}{ \tau^{\beta_p}}\biggl( \int_X\int_X p_{\tau}(x,y) |f(x)-f(y)|^p \,d\mu(x)\,d\mu(y)\biggr)^{1/p}.
\end{equation*}
\end{proposition}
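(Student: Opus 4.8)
The plan is to exploit the duality between the present range $1\le p\le 2$ and the range $p'\ge 2$ (with $\tfrac1p+\tfrac1{p'}=1$) already treated in Theorem~\ref{P:PtinBesovp4}; the exponents match because a direct computation gives $\beta_p+\beta_{p'}=1$. The elementary building block is that, since $\{P_t\}$ is conservative, $P_\tau f(x)-f(x)=\int_X p_\tau(x,y)(f(y)-f(x))\,d\mu(y)$, so Jensen's inequality for the probability measure $p_\tau(x,\cdot)\,d\mu$ yields
\[
\|P_\tau f-f\|_{L^p(X,\mu)}\le\Bigl(\iint_{X\times X}p_\tau(x,y)|f(x)-f(y)|^p\,d\mu(x)\,d\mu(y)\Bigr)^{1/p}
\]
for every $\tau>0$. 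This already controls the left side at a \emph{single} scale $\tau$; the whole difficulty is to pass to the fixed (possibly large) time $t$ while keeping the small-scale $\liminf$ on the right.

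For the main estimate I would write $\|P_tf-f\|_{L^p}=\sup_{\|g\|_{L^{p'}}\le1}\langle f,(P_t-I)g\rangle$ and telescope dyadically: with $\sigma_k=t2^{-(k+1)}$ one has $(P_t-I)g=\sum_{k\ge0}(P_{\sigma_k}-I)P_{\sigma_k}g$. Antisymmetrizing each term,
\[
\langle f,(P_{\sigma_k}-I)P_{\sigma_k}g\rangle=-\tfrac12\iint_{X\times X}p_{\sigma_k}(x,y)\bigl(f(x)-f(y)\bigr)\bigl(P_{\sigma_k}g(x)-P_{\sigma_k}g(y)\bigr)\,d\mu(x)\,d\mu(y),
\]
and applying H\"older in $(p,p')$, the $g$-factor is $\bigl(\iint p_{\sigma_k}|P_{\sigma_k}g(x)-P_{\sigma_k}g(y)|^{p'}\bigr)^{1/p'}\le\sigma_k^{\beta_{p'}}\|P_{\sigma_k}g\|_{p',\beta_{p'}}\le C\|g\|_{L^{p'}}$ by Theorem~\ref{P:PtinBesovp4} (for $p=1$, where $p'=\infty$, one uses \eqref{WBECD} together with \eqref{eq:boundpolyptbypct} in place of Theorem~\ref{P:PtinBesovp4}); this is precisely where the weak Bakry-\'Emery hypothesis enters. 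The $f$-factor is $\sigma_k^{\beta_p}\bigl(\sigma_k^{-\beta_p p}\iint p_{\sigma_k}|f(x)-f(y)|^p\bigr)^{1/p}\le \sigma_k^{\beta_p}\|f\|_{p,\beta_p}$, and since $\sum_k\sigma_k^{\beta_p}=c\,t^{\beta_p}$ converges ($\beta_p>0$), summing gives the \emph{Besov-seminorm} bound $\|P_tf-f\|_{L^p}\le C\,t^{\beta_p}\|f\|_{p,\beta_p}$. (Equivalently one may start from $P_tf-f=\int_0^t LP_sf\,ds$, bound $\|LP_sf\|_{L^p}\le C s^{-1}\bigl(\iint p_{cs}|f(x)-f(y)|^p\bigr)^{1/p}$ via the time-derivative estimate $|\partial_s p_s|\le (C/s)p_{cs}$ from the proof of Theorem~\ref{thm-xBE-FD} and \eqref{eq:boundpolyptbypct}, and integrate; this produces the same seminorm bound.)

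The remaining, and genuinely delicate, step is to replace $\sup_{\tau>0}$ by $\liminf_{\tau\to0^+}$. The prototype is $p=2$: spectral calculus gives $\|P_tf-f\|_2^2=\int(1-e^{-t\lambda})^2\,d\langle E_\lambda f,f\rangle\le t\int\lambda\,d\langle E_\lambda f,f\rangle=t\,\mathcal E(f,f)$, and $\lim_{\tau\to0^+}\tau^{-1}\iint p_\tau|f(x)-f(y)|^2=2\mathcal E(f,f)$ exists by monotone convergence, so the right-hand seminorm is already a limit equal to a multiple of the energy. I expect the hard part of the general argument to be exactly this upgrade, i.e. a quasi-monotonicity of $\tau\mapsto\tau^{-\beta_p}\bigl(\iint p_\tau|f(x)-f(y)|^p\bigr)^{1/p}$ that is transparent from the heat kernel only at $p=2$. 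The natural route is to run the seminorm estimate on the regularized function $P_sf$ and let $s\to0^+$: strong $L^p$-continuity gives $\|P_s(P_tf-f)-(P_tf-f)\|_{L^p}\to0$ on the left, while the smoothing of $\{P_t\}$—encoded in the doubling inequality $\iint p_{2\tau}|h(x)-h(y)|^p\le 2^p\iint p_\tau|h(x)-h(y)|^p$ and in \eqref{WBECD}—should force the supremum of the heat-Besov quantity of $P_sf$ down to the $\liminf$ of that of $f$. Making this domination rigorous for $1\le p<2$, where no spectral representation is available, is the principal obstacle.
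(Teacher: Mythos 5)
Your telescoping argument is correct as far as it goes, and it is essentially a dyadic discretization of the paper's own mechanism: your antisymmetrized pairing $-\tfrac12\iint p_{\sigma_k}(x,y)\bigl(f(x)-f(y)\bigr)\bigl(P_{\sigma_k}g(x)-P_{\sigma_k}g(y)\bigr)\,d\mu\,d\mu$ is exactly $-\sigma_k\,\mathcal{E}_{\sigma_k}(f,P_{\sigma_k}g)$ in the paper's notation, and your treatment of the dual factor via Theorem~\ref{P:PtinBesovp4} (resp.\ \eqref{WBECD} together with \eqref{eq:boundpolyptbypct} when $p=1$) is the same device as in \eqref{eq:ppoincareeqn1}--\eqref{eq:ppoincareeqn2}. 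However, what it proves is only $\|P_tf-f\|_{L^p}\le Ct^{\beta_p}\|f\|_{p,\beta_p}$, i.e.\ the bound with the \emph{supremum} seminorm: each dyadic term is controlled by the Besov quantity at the fixed scale $\sigma_k=t2^{-(k+1)}$, and since these scales form a fixed sequence accumulating at $0$, nothing lets you replace them by $\liminf_{\tau\to0^+}$ --- the liminf may be attained along a sequence disjoint from $\{\sigma_k\}$ at which the quantity is far smaller. You correctly flag this as the principal obstacle, so the proposal has a genuine gap; and the gap is not cosmetic. The liminf form is precisely what Theorem~\ref{estimate kappa alpha} needs (for $f\in\B^{p,\alpha}$ with $\alpha>\beta_p$ the liminf vanishes, forcing $P_tf=f$ and $f$ constant, whereas the sup version yields only a finite bound and no conclusion), and it is what Lemma~\ref{Lemma 2:BV embed} uses through $\mathbf{Var}_*$.

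Your proposed repair --- run the estimate on $P_sf$ and argue that smoothing forces the sup of the heat--Besov quantity down to the liminf --- would require a comparability of the form $\sup_\tau\le C\liminf_\tau$, and this cannot come from the doubling inequality alone: $\iint p_{2\tau}|h(x)-h(y)|^p\,d\mu\,d\mu\le 2^p\iint p_{\tau}|h(x)-h(y)|^p\,d\mu\,d\mu$ gives quasi-monotonicity of $\tau^{-\beta_p p}\iint p_\tau|f(x)-f(y)|^p$ only with a loss $2^{p(1-\beta_p)}>1$ per doubling, which compounds without bound. Such a locality statement is proved in the paper only for $p=1$ at the critical exponent (Theorem~\ref{bounded embedding}), and its proof relies on Proposition~\ref{pseudo poincare} itself, so this route is circular. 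The paper's trick for capturing the liminf is different and is the point to internalize: strong continuity gives the exact identity $\int_X(P_tf-f)h\,d\mu=-\lim_{\tau\to0^+}\int_0^t\mathcal{E}_\tau(P_sf,h)\,ds$ with a \emph{single} parameter $\tau$ shared by all times $s\in(0,t)$; by symmetry $\mathcal{E}_\tau(P_sf,h)=\mathcal{E}_\tau(f,P_sh)$, and for each fixed $\tau$ this is bounded by $Cs^{-\beta_q}\|h\|_{L^q}\tau^{-\beta_p}\bigl(\iint p_{c\tau}(x,y)|f(x)-f(y)|^p\,d\mu\,d\mu\bigr)^{1/p}$; since the left side is an honest limit in $\tau$, one may evaluate the bound along any sequence $\tau_n\to0^+$, in particular one realizing the liminf, and conclude by $L^p$--$L^q$ duality. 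Replacing your dyadic sum by this continuous-time identity converts your (correct) sup-seminorm estimate into the stated proposition.
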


\begin{proof}
We denote
\[
\mathcal{E}_\tau(u,v)= \frac1\tau\int_X u(I-P_\tau)v\,d\mu 
 = \frac1{2\tau} \int_X\int_X p_\tau(x,y)\bigl( u(x)-u(y)\bigr)\bigl(v(x)-v(y) \bigr) d\mu(x)d\mu(y).
 \]
Fix $f\in L^p(X,\mu)$ and $h\in L^q(X,\mu)$ where $p$ and $q$ are conjugate exponents.   Using the strong continuity of the semigroup $P_t$ in $L^1(X,\mu)$, one has for $t \ge 0$,
\begin{align}
\int_X (P_t f -f ) h\, d\mu
&= \lim_{\tau\to0^+} \frac{1}{\tau} \int_X \Bigl( \int_t^{t+\tau} P_s f ds - \int_0^{\tau} P_s f ds \Bigr) h\, d\mu \notag\\
&= \lim_{\tau \to 0^+} \frac{1}{ \tau} \int_0^t\int_X (P_{s+\tau} f -P_{s} f) h \, d\mu\, ds 
= - \lim_{\tau \to 0^+} \int_0^t \mathcal{E}_\tau (P_s f ,h) ds. \label{eq:Pt-IbyVar1}
\end{align}
In the case $p=1$ we now estimate $\mathcal{E}_\tau (P_s f ,h) =\mathcal{E}_\tau (f ,P_s h)$ using $wBE(\kappa)$ and then~\eqref{eq:boundpolyptbypct} as follows:
\begin{align}\label{eq:ppoincareeqn1}
2\big|\mathcal{E}_\tau(f,P_sh)\bigr|
 &\le \frac{1}{\tau}\int_X \int_X p_\tau (x,y) | P_s h(x)-P_sh(y)||f(x)-f(y)| d\mu(x) d\mu(y) \notag\\
 & \le C\| h \|_{L^\infty}\frac1{\tau s^{\kappa/d_W}} \int_X \int_X p_\tau (x,y) d(x,y)^\kappa |f(x)-f(y)| d\mu(x) d\mu(y) \notag\\
 &\leq C\| h \|_{L^\infty} \frac1 {s^{\kappa/d_W} \tau^{\beta_1}} \int_X \int_X p_{c\tau} (x,y) |f(x)-f(y)| d\mu(x) d\mu(y).
\end{align}
The $1<p\leq2$ case is similar except that we use H\"older's inequality and Theorem~\ref{P:PtinBesovp4} to obtain
\begin{align}\label{eq:ppoincareeqn2}
2\big|\mathcal{E}_\tau(f,P_sh)\bigr|
&\leq \frac{1}{\tau}\int_X \int_X p_\tau (x,y) | P_s h(x)-P_sh(y)||f(x)-f(y)| d\mu(x) d\mu(y) \notag \\
&\leq \tau^{-\beta_q} \bigl\| \mathcal{P}_s h \|_{L^q(X\times X,p_\tau d\mu\otimes d\mu)} 
 \tau^{-\beta_p} \bigl\| \mathcal{P}_0 f \|_{L^p(X\times X,p_\tau d\mu\otimes d\mu)} \notag \\
&\leq C s^{-\beta_q} \|h\|_{L^q(X,\mu)} \tau^{-\beta_p} \biggl(\int_X\int_X p_\tau(x,y)|f(x)-f(y)|^p\,d\mu(x)\,d\mu(y)\biggr)^{1/p}.
\end{align}
Integrating~\eqref{eq:ppoincareeqn1} and~\eqref{eq:ppoincareeqn2} over $s\in(0,t)$ as in~\eqref{eq:Pt-IbyVar1} and taking $\liminf_{\tau\to0^+}$ give for $1\leq p\leq 2$:
\begin{align*}
\biggl| \int_X (P_t f -f ) h d\mu \biggr| \le C \frac{t^{\beta_p}}{\beta_p}  \| h \|_{L^q(X,\mu)} \liminf_{\tau \to 0^+}\frac{1}{ \tau^{\beta_p}}\biggl(\int_X\int_X p_{\tau}(x,y) |f(x)-f(y)|^p \,d\mu(x)\,d\mu(y)\biggr)^{1/p},
\end{align*}
so the conclusion  follows by $L^p-L^q$ duality.
\end{proof}

\subsection{$L^p$--Besov critical exponents and generalized Riesz transforms}\label{S:critical_exp_local}
Recall that the density or triviality of the spaces $\B^{p,\alpha}(X)$ can be described using the critical exponents $\alpha_p^*\leq\alpha_p^\#$, where the former is supremal for density in $L^p$ and the latter supremal for containing non-constant functions, see~\eqref{eqn:defnBesovcritexponents}.  In this section we give bounds on the exponents that follow from the weak Bakry-\'Emery condition~\eqref{WBECD}, using the notation $\beta_p$ from~\eqref{eq:defnofbetap}. Bounds without this condition were proved in~\cite[Section~5.2]{ABCRST1}, namely $\frac{1}{2} \le \alpha^*_p $ if $1 \le p \le 2$ and $\alpha_p^\#\leq \frac{1}{2}$ if $ p \ge 2$.

\begin{theorem}\label{estimate kappa alpha}
If $(X,\mu,\mathcal{E},\mathcal{F})$ satisfies the weak Bakry-\'Emery condition $wBE(\kappa)$ then:
\begin{itemize}
\item For $1 \le p \le 2$ we have $\frac{1}{2} \le \alpha^*_p \le \alpha_p^\#\leq \beta_p$. 
\item For $ p \ge 2$ we have $\beta_p \le \alpha^*_p \le\alpha_p^\#\leq \frac{1}{2}$.
\end{itemize}
\end{theorem}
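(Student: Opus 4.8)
The plan is to prove the four required inequalities by splitting each regime into a density lower bound for $\alpha_p^*$ and a triviality upper bound for $\alpha_p^\#$, and then to invoke the universal inequality $\alpha_p^*\le\alpha_p^\#$ to close each chain. For the lower bounds the mechanism is: if $P_t$ maps $L^p(X,\mu)$ boundedly into some $\B^{p,\gamma}(X)$, then since $P_tf\to f$ in $L^p$ as $t\to0^+$ by strong continuity of the heat semigroup, the class $\B^{p,\gamma}(X)$ contains the dense set $\bigcup_{t>0}P_t(L^p(X,\mu))$ and is therefore itself dense, whence $\alpha_p^*\ge\gamma$. For the upper bounds the mechanism is a pseudo-Poincar\'e inequality: if membership of $f$ in $\B^{p,\alpha}$ forces the right-hand side of such an inequality to vanish, then $P_tf=f$ for all $t$, and irreducibility forces $f$ to be constant. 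I would also record at the outset the elementary monotonicity $\B^{p,\alpha_2}(X)\subseteq\B^{p,\alpha_1}(X)$ for $\alpha_1\le\alpha_2$ noted after~\eqref{eq:Besov}, so that exhibiting density (resp. triviality) at a single threshold value controls the supremum defining $\alpha_p^*$ (resp. $\alpha_p^\#$).

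For the lower bounds, consider first $p\ge2$. Theorem~\ref{P:PtinBesovp4} states precisely that $P_t\colon L^p(X,\mu)\to\B^{p,\beta_p}(X)$ is bounded, so the mechanism above yields $\alpha_p^*\ge\beta_p$. For $1\le p\le2$ I would instead use the curvature-free continuity estimate $\|P_tf\|_{p,1/2}\le Ct^{-1/2}\|f\|_{L^p(X,\mu)}$ coming from~\cite[Section~5.1]{ABCRST1} (its case $p=2$ being the Dirichlet-form identity $\|\cdot\|_{2,1/2}^2=2\mathcal E$ recalled in the proof of Theorem~\ref{P:PtinBesovp4}); this gives $P_t\colon L^p(X,\mu)\to\B^{p,1/2}(X)$ boundedly, hence $\alpha_p^*\ge\tfrac12$. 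Since $\beta_2=\tfrac12$, the two statements agree at $p=2$.

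For the upper bounds, take $1\le p\le2$ and suppose $f\in\B^{p,\alpha}(X)$ with $\alpha>\beta_p$. By the definition~\eqref{eq:defnofheatbesovnorm} of the seminorm, $\bigl(\int_X\int_X p_\tau(x,y)|f(x)-f(y)|^p\,d\mu(x)\,d\mu(y)\bigr)^{1/p}\le\|f\|_{p,\alpha}\,\tau^{\alpha}$, so the $\liminf$ on the right-hand side of the pseudo-Poincar\'e inequality of Proposition~\ref{pseudo poincare} is at most $\|f\|_{p,\alpha}\liminf_{\tau\to0^+}\tau^{\alpha-\beta_p}=0$. Thus $\|P_tf-f\|_{L^p(X,\mu)}=0$ for every $t>0$, i.e. $f$ is $P_t$-invariant; as discussed below this forces $f$ to be constant, so $\B^{p,\alpha}(X)$ contains no non-constant function and $\alpha_p^\#\le\beta_p$. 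The case $p\ge2$ is identical, using instead the pseudo-Poincar\'e inequality with exponent $\tfrac12$ obtained without the curvature hypothesis in~\cite[Proposition~5.3]{ABCRST1}, giving $\alpha_p^\#\le\tfrac12$. Combining each lower bound with the matching upper bound through $\alpha_p^*\le\alpha_p^\#$ produces both chains in the statement.

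The genuinely delicate point, and the step I would write out most carefully, is the implication ``$P_tf=f$ for all $t>0\Rightarrow f$ constant'': the analytic substance is already packaged in Theorem~\ref{P:PtinBesovp4} and Proposition~\ref{pseudo poincare}, so what remains is to justify irreducibility of the form. The lower bound in the sub-Gaussian estimate~\eqref{eq:subGauss-upper} makes $p_t(x,y)$ strictly positive for all $x,y\in X$, and together with connectedness of $X$ this excludes non-trivial $P_t$-invariant sets, so any $P_t$-invariant $f\in L^p$ is $\mu$-a.e. constant (necessarily $0$ when $\mu(X)=\infty$, since non-zero constants then fail to lie in $L^p$). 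A minor secondary concern is to confirm that the two regimes are mutually consistent, which under the standing assumptions reduces to the inequality $\kappa/d_W\le\tfrac12$ (guaranteeing $\beta_p\ge\tfrac12$ for $p\le2$ and $\beta_p\le\tfrac12$ for $p\ge2$); this holds, for instance, whenever $X$ satisfies a chain condition, since then $\kappa\le1\le d_W/2$ by Remark~\ref{chain condition} and $d_W\ge2$.
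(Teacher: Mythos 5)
Your proof is correct and takes essentially the same route as the paper's: lower bounds come from boundedness of the semigroup into the Besov scale (Theorem~\ref{P:PtinBesovp4} for $p\ge 2$, and the curvature-free $L^p\to\mathbf{B}^{p,1/2}$ continuity of~\cite{ABCRST1} for $1\le p\le 2$) together with strong continuity of $P_t$, while upper bounds come from the $\liminf$-form pseudo-Poincar\'e inequalities (Proposition~\ref{pseudo poincare}, and its curvature-free $p\ge2$ counterpart in~\cite{ABCRST1}) forcing $P_t$-invariance and hence constancy --- the paper merely compresses your $\alpha_p^*\ge\frac12$ and $p\ge2$ triviality steps into citations of \cite[Proposition~5.6]{ABCRST1}, verifying its hypothesis ($\mathcal{E}(f,f)=0$ implies $f$ constant) by the same pseudo-Poincar\'e mechanism you use, and your explicit irreducibility argument via the sub-Gaussian lower bound is exactly what the paper leaves implicit in ``thus $f$ is constant.'' Your closing ``consistency'' concern is unnecessary: no chain condition is needed, since the two chains are proved independently and their conjunction \emph{implies} $\kappa\le d_W/2$ (as the paper observes in the remark following the theorem) rather than presupposing it.
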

\begin{proof}
If $1\leq p\leq 2$ and $\alpha>\beta_p$ then for $f \in \B^{p,\alpha}(X)$ we have
\begin{equation*}
	\liminf_{\tau \to 0^+}\frac{1}{\tau^{\beta_p}} \biggl( \int_X\int_Xp_\tau(x,y) |f(x)-f(y)|^p \,d\mu(x)\,d\mu(y)\biggr)^{1/p}
	\leq \liminf_{\tau \to 0^+} \tau^{\alpha-\beta_p}\|f\|_{p,\alpha} = 0,
	\end{equation*}
so Proposition~\ref{pseudo poincare} implies $P_tf=f$ for all $t>0$. Since the heat kernel upper bound implies by interpolation the estimate
\[
\| P_t f \|_{L^\infty(X,\mu)} \le \frac{C}{t^{\frac{d_H}{pd_W}}} \| f \|_{L^p(X,\mu)},
\]
we deduce by letting $t \to +\infty$ that $f$ is zero and thus constant.  This gives $\alpha_p^\#\leq\beta_p$ for $1\leq p\leq2$.  The bound $\alpha_p^*\geq\frac12$ is true even without the weak Bakry-\'Emery inequality, see~\cite[Proposition~5.6]{ABCRST1}.

Since $\|f\|_{2,1/2}^2=2\DF(f,f)$, see~\cite[Proposition~4.6]{ABCRST1}, Proposition~\ref{pseudo poincare} also shows that $\DF(f,f)=0$ implies $f$ is constant, under which hypothesis $\alpha_p^\#\leq \frac{1}{2}$ was proven for $p\geq2$ in~\cite[Proposition~5.6]{ABCRST1}. For the other inequality, if $p \ge 2$ then Theorem \ref{P:PtinBesovp4} says $P_tf \in \B^{p,\beta_p}(X)$ whenever $f \in L^p(X,\mu)$; since $P_t f \to f$ in $L^p(X,\mu)$ when $t \to 0$ we conclude that $\B^{p,\beta_p}(X)$ is dense in $L^p(X,\mu)$ and hence $\beta_p\leq\alpha_p^*$.  
\end{proof}

\begin{remark}
Note that from $\beta_1 \ge \frac{1}{2}$ one deduces that $\kappa \le \frac{d_W}{2}$. If $(X,d)$ satisfies a chain condition, then from Remark \ref{chain condition}, one has $\kappa \le 1$. Thus, in that case, if $\kappa =\frac{d_W}{2}$, one actually has $\kappa=1$ and $d_W=2$, meaning that the heat kernel has Gaussian  estimates.
\end{remark}

The next corollary shows that the upper bound $\kappa=\frac{d_W}{2}$ may only be achieved in Dirichlet spaces that admit a carr\'e du champ. 

\begin{corollary}\label{carre du champ}
If $(X,d,\mu)$ satisfies $wBE(\kappa)$ with $\kappa=\frac{d_W}{2}$, then the form $\mathcal{E}$ admits a carr\'e du champ operator. More precisely, for every $f\in L^\infty (X,\mu)\cap\mathcal{F}$  there is $\Gamma(f,f)\in L^1(X,\mu)$ such that for all $g\in L^\infty(X,\mu)\cap\mathcal{F}$, 
\begin{equation}\label{eqn-ac}
\int_X g \Gamma (f,f) d\mu =2 \mathcal{E}(gf,f)-\mathcal{E}(f^2,g).
\end{equation}
\end{corollary}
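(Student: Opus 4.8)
The plan is to prove the single analytic fact that the energy measure $\Gamma(f,f)$ is absolutely continuous with respect to $\mu$ for every $f\in\mathcal F\cap L^\infty(X,\mu)$; the stated identity then merely names its density $\Gamma(f):=d\Gamma(f,f)/d\mu$, whose integrability is automatic since $\Gamma(f,f)(X)=2\mathcal E(f,f)<\infty$. First I would record the semigroup approximation of the energy measure. Writing $\mathcal E_t(u,v)=\frac1{2t}\iint p_t(x,y)(u(x)-u(y))(v(x)-v(y))\,d\mu\,d\mu$ and using the pointwise identity $2(f(x)g(x)-f(y)g(y))(f(x)-f(y))-(f(x)^2-f(y)^2)(g(x)-g(y))=(g(x)+g(y))(f(x)-f(y))^2$ together with the symmetry of $p_t$, one obtains $2\mathcal E_t(fg,f)-\mathcal E_t(f^2,g)=2\int_X g\,\Gamma_t(f)\,d\mu$, where $\Gamma_t(f)(x):=\frac1{2t}\int_X p_t(x,y)(f(x)-f(y))^2\,d\mu(y)\ge0$. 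Since $\mathcal E_t(u,v)\to\mathcal E(u,v)$ for $u,v\in\mathcal F$, and $fg,f^2\in\mathcal F$ because $\mathcal F\cap L^\infty$ is an algebra, testing against $g\in\mathcal F\cap C_c(X)$ shows that $\Gamma_t(f)\,d\mu\to\tfrac12\Gamma(f,f)$ vaguely as $t\to0$.

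The heart of the argument is the ``smooth'' case $f=P_sh$ with $h\in L^\infty(X,\mu)$, where the weak Bakry--\'Emery condition enters. By~\eqref{WBECD} with $\kappa=d_W/2$ we have $(P_sh(x)-P_sh(y))^2\le C s^{-1}d(x,y)^{d_W}\|h\|_{L^\infty}^2$, so
\[
\Gamma_t(P_sh)(x)\le \frac{C\|h\|_{L^\infty}^2}{2ts}\int_X p_t(x,y)\,d(x,y)^{d_W}\,d\mu(y)\le \frac{C\|h\|_{L^\infty}^2}{s},
\]
the last step being~\eqref{eq:boundpolyptbypct} with exponent $d_W$, which trades $d(x,y)^{d_W}$ for exactly one factor of $t$ and cancels the prefactor $1/t$. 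This bound is uniform in both $t$ and $x$, and it is exactly here that $\kappa=d_W/2$ is sharp: repeating the computation with a general $\kappa$ gives $\Gamma_t(P_sh)\le C s^{-2\kappa/d_W}\|h\|_{L^\infty}^2\,t^{2\kappa/d_W-1}$, which stays bounded as $t\to0$ precisely when $\kappa\ge d_W/2$. Combining the uniform bound with the vague convergence of $\Gamma_t(P_sh)\,d\mu$ yields $\Gamma(P_sh,P_sh)\le (C\|h\|_{L^\infty}^2/s)\,\mu$, so $\Gamma(P_sh,P_sh)\ll\mu$ with a bounded density.

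Finally I would remove the smoothing by a closure argument based on the Cauchy--Schwarz inequality for energy measures. Given $f\in\mathcal F\cap L^\infty$, set $f_n=P_{1/n}f$, so $\|f_n\|_{L^\infty}\le\|f\|_{L^\infty}$ and $f_n\to f$ in $\mathcal F$. By the previous step each $\Gamma(f_n,f_n)$ and each $\Gamma(f_n-f_m,f_n-f_m)$ is absolutely continuous, with densities $\gamma_n$ and $\gamma_{n,m}$; the inequality $|\Gamma(u,v)|\le\Gamma(u,u)^{1/2}\Gamma(v,v)^{1/2}$ of measures then gives $|\sqrt{\gamma_n}-\sqrt{\gamma_m}|\le\sqrt{\gamma_{n,m}}$ $\mu$-a.e., whence $\|\sqrt{\gamma_n}-\sqrt{\gamma_m}\|_{L^2(\mu)}^2\le \Gamma(f_n-f_m,f_n-f_m)(X)=2\mathcal E(f_n-f_m,f_n-f_m)\to0$. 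Thus $\sqrt{\gamma_n}\to G$ in $L^2(\mu)$, hence $\gamma_n\to G^2$ in $L^1(\mu)$, i.e. $\Gamma(f_n,f_n)\to G^2\,d\mu$ in total variation; since also $\Gamma(f_n,f_n)\to\Gamma(f,f)$ in total variation (again by Cauchy--Schwarz, using $\mathcal E(f_n-f,f_n-f)\to0$), we conclude $\Gamma(f,f)=G^2\,d\mu$, so $\Gamma(f)=G^2\in L^1(X,\mu)$ and the identity follows. The main obstacle is the uniform-in-$t$ bound of the middle paragraph: the whole argument hinges on the exponent bookkeeping by which $d(x,y)^{d_W}p_t\le C t\,p_{ct}$ exactly compensates the $1/t$ in $\Gamma_t$, a gain available only at the critical value $\kappa=d_W/2$.
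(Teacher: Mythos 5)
Your proposal is correct, but it takes a genuinely different route from the paper. The paper's own proof is a two-line reduction to prior work: with $\kappa=\frac{d_W}{2}$ the exponent $\beta_p$ from~\eqref{eq:defnofbetap} equals $\frac12$ for every $p\ge1$, so Theorem~\ref{estimate kappa alpha} forces $\alpha_p^*=\alpha_p^\#=\frac12$; via Theorem~\ref{P:PtinBesovp4} this makes $\mathbf{B}^{p,1/2}(X)\cap\mathcal{F}$ dense in $\mathcal{F}$ in the $\bigl(\mathcal{E}(f,f)+\|f\|_{L^2}^2\bigr)^{1/2}$ norm, and the carr\'e du champ is then quoted as a black box from Corollary~4.12 of~\cite{ABCRST1}. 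You instead re-prove the content of that black box from scratch: your algebraic identity and the resulting formula $2\mathcal{E}_t(fg,f)-\mathcal{E}_t(f^2,g)=2\int_X g\,\Gamma_t(f)\,d\mu$ are correct (note the identification of $\mathcal{E}_t(u,v)$ with $\frac1t\langle u-P_tu,v\rangle$ uses conservativeness, $P_t1=1$, which the paper guarantees); the uniform bound $\Gamma_t(P_sh)\le C\|h\|_{L^\infty}^2/s$ from $wBE(d_W/2)$ combined with~\eqref{eq:boundpolyptbypct} at exponent $d_W$ is exactly right, and you correctly isolate this as the place where $\kappa=\frac{d_W}{2}$ is used; the Cauchy--Schwarz closure along $f_n=P_{1/n}f$ is the standard completion argument. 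Your route buys a self-contained proof and a stronger conclusion in the smooth case (an $L^\infty$, not merely integrable, density for the energy measure of $P_sh$), while the paper's route buys brevity by leveraging the Besov-scale machinery already built in~\cite{ABCRST1}, where the corresponding mechanism runs through $\beta_4=\frac12$ and $L^2$-bounded densities. Three steps you leave implicit are worth a sentence each but are not gaps: (i) in the smooth case take $h\in L^2\cap L^\infty$ so that $P_sh\in\mathcal{F}$ (true in your application, where $h$ is built from $f$); (ii) the absolute continuity of $\Gamma(f_n-f_m,f_n-f_m)$ rests on the semigroup identity $f_n-f_m=P_{1/n}\bigl(f-P_{1/m-1/n}f\bigr)$ for $n>m$, which exhibits the difference as $P_sh$ with $\|h\|_{L^\infty}\le2\|f\|_{L^\infty}$; (iii) the identity~\eqref{eqn-ac} is first available for $g\in\mathcal{F}\cap C_c(X)$ from the Beurling--Deny definition of the energy measure, and its extension to all $g\in\mathcal{F}\cap L^\infty(X,\mu)$ needs the routine approximation argument, harmless here since the limit measure $G^2\,d\mu$ charges no set of capacity zero.
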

\begin{proof}
If $(X,d,\mu)$ satisfies $wBE(\kappa)$ with $\kappa=\frac{d_W}{2}$ then $\beta_p=\frac12$ for every $p \ge 1$,  so  Theorem~\ref{estimate kappa alpha} yields $\alpha^*_p=\alpha_p^\#=\frac12$. It follows that $\mathbf{B}^{p,1/2}(X)\cap\mathcal{F}$ is dense in $\mathcal{F}$ in the norm $\bigl(\DF(f,f)+\|f\|_{L^2}^2\bigr)^{1/2}$, and the existence of the carr\'e du champ operator is provided by Corollary~4.12 in~\cite{ABCRST1}.
\end{proof}

It is natural to ask under what conditions one has $\alpha_p^*=\alpha_p^\#=\beta_p$ for the optimal $\kappa$ such that 
$wBE(\kappa)$ is satisfied.
This is a question for future study, however it seems worthwhile to  briefly comment here on the connection of this problem to a natural notion of Riesz transform.  For $p>1$, $\alpha \in (0,1]$, let us say that $X$ satisfies $(R_{p,\alpha})$ if there exists a constant $C=C_{p,\alpha}$ such that 
\begin{equation}
\| f \|_{p,\alpha} \le C \| (-L)^{\alpha} f \|_{L^p(X,\mu)},  \tag{$R_{p,\alpha}$}
\end{equation} 
for all $f$ in a suitable domain. For instance, in the strictly local framework of~\cite{ABCRST2}, under the strong Bakry-\'Emery curvature condition, Corollary~4.10 in~\cite{ABCRST2} gives $\| f \|^p_{p,1/2} \sim \int_X \Gamma(f,f)^{p/2} d\mu$ and $(R_{p,1/2})$ is therefore equivalent to boundedness of the Riesz transform in $L^p(X,\mu)$.

In the present setting one has the following result.
\begin{lemma}
For $1<p \le 2$, validity of $(R_{p,\beta_p})$ implies $\alpha^*_p =\beta_p$.
\end{lemma}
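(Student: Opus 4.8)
The plan is to pair the bound $\alpha_p^*\le\beta_p$ already provided by Theorem~\ref{estimate kappa alpha} (valid for $1\le p\le2$) with a reverse bound $\alpha_p^*\ge\beta_p$ extracted from $(R_{p,\beta_p})$; equality is then immediate. Because the Besov classes are nested, with $\B^{p,\beta_p}(X)\subset\B^{p,\alpha}(X)$ whenever $\alpha\le\beta_p$, it is enough to prove that $\B^{p,\beta_p}(X)$ is dense in $L^p(X,\mu)$.

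The argument I would run mirrors the proof of $\beta_p\le\alpha_p^*$ carried out for $p\ge2$ inside Theorem~\ref{estimate kappa alpha}, with $(R_{p,\beta_p})$ taking over the role there played by Theorem~\ref{P:PtinBesovp4}. First I would fix $h\in L^p(X,\mu)$ and examine $P_th$ for $t>0$. Since $\{P_t\}$ is a self-adjoint submarkovian semigroup, it is bounded analytic on $L^p(X,\mu)$ for $1<p<\infty$; hence $P_th$ belongs to the $L^p$-domain of $(-L)^{\beta_p}$ and satisfies the smoothing bound $\|(-L)^{\beta_p}P_t\|_{L^p\to L^p}\le Ct^{-\beta_p}$. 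Feeding $f=P_th$ into $(R_{p,\beta_p})$ would then give
\[
\|P_th\|_{p,\beta_p}\le C\,\|(-L)^{\beta_p}P_th\|_{L^p(X,\mu)}\le C\,t^{-\beta_p}\|h\|_{L^p(X,\mu)}<\infty,
\]
so that $P_th\in\B^{p,\beta_p}(X)$ for every $t>0$.

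To finish I would invoke strong continuity of the semigroup, namely $P_th\to h$ in $L^p(X,\mu)$ as $t\to0^+$. As $h$ ranges over $L^p(X,\mu)$ the functions $P_th$ therefore form a dense subset of $L^p(X,\mu)$ contained in $\B^{p,\beta_p}(X)$, which yields $\alpha_p^*\ge\beta_p$ and hence $\alpha_p^*=\beta_p$.

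The hard part will be the justification that $P_th$ lies in the $L^p$-domain of $(-L)^{\beta_p}$, i.e.\ that $(-L)^{\beta_p}P_th\in L^p(X,\mu)$, since this is exactly the hypothesis that lets $(R_{p,\beta_p})$ be applied. This rests on analyticity of the self-adjoint submarkovian semigroup on $L^p(X,\mu)$ for $1<p<\infty$ (a Stein-type result), and it is precisely here that the restriction $p>1$ enters, in keeping with the fact that $(R_{p,\alpha})$ is only formulated for $p>1$. I would stress that for the density argument one needs only finiteness of $\|(-L)^{\beta_p}P_th\|_{L^p(X,\mu)}$ for each fixed $t$; the sharp rate $t^{-\beta_p}$ is inessential here, although it makes the parallel with the $p\ge2$ case transparent.
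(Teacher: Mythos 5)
Your proposal is correct and follows essentially the same route as the paper: the paper's proof likewise applies $(R_{p,\beta_p})$ to $P_t h$, uses analyticity of the semigroup to get $\|(-L)^{\beta_p}P_t h\|_{L^p(X,\mu)}\le Ct^{-\beta_p}\|h\|_{L^p(X,\mu)}$, and concludes density of $\B^{p,\beta_p}(X)$ in $L^p(X,\mu)$ from strong continuity, with the matching bound $\alpha_p^*\le\beta_p$ coming from Theorem~\ref{estimate kappa alpha}. You merely make explicit two points the paper leaves implicit, namely the appeal to Theorem~\ref{estimate kappa alpha} for the reverse inequality and the Stein-type analyticity justifying that $P_t h$ lies in the $L^p$-domain of $(-L)^{\beta_p}$.
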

\begin{proof}
By analyticity of the semigroup, one has for every $f \in L^p(X,\mu)$, 
\[
\| P_t f \|_{p,\beta_p} \le C \| (-L)^{\beta_p} P_tf \|_{L^p(X,\mu)} \le \frac{C}{t^{\beta_p}} \| f \|_{L^p(X,\mu)}.
\]
Therefore, for $t>0$, $P_t: L^p(X,\mu) \to \B^{p,\beta_p}(X)$ is bounded. 
In addition, the heat semigroup is strongly continuous on $L^p(X,\mu)$ for $1<p\le 2$, i.e., $\|P_tf-f\|_{L^p(X,\mu)}\to 0$ as $t$ tends to 0. We thus conclude that $\B^{p,\beta_p}(X)$ is dense in $L^p(X,\mu)$ and $\alpha_p^*\ge \beta_p$. The proof is complete by recalling that $\alpha_p^*\le \beta_p$ from Theorem \ref{estimate kappa alpha}. 
\end{proof}

\section{BV class}\label{section BV}
We now turn to the highlight of our paper and define the BV class. As always, we assume that $(X,d,\mu)$ satisfies the standing Assumption \ref{A1} and Assumption~\ref{sGKHE}.

\subsection{$L^1$ Korevaar-Schoen and BV class}

To motivate our definition of the BV class, we first introduce the scale of $L^1$ Korevaar-Schoen type spaces (see \cite{KS93,KST04} for the classical definitions). For $\lambda >0$, we define the space $KS^{\lambda,1}(X)$ to be those $f\in L^1(X,\mu)$ for which 
\[
\Vert f\Vert_{KS^{\lambda,1}(X)}:=\limsup_{r\to 0^+}\iint_{\Delta_r}\frac{|f(y)-f(x)|}{r^\lambda \mu(B(x,r))}\, d\mu(y)\, d\mu(x)<+\infty,
\]
where we recall that $\Delta_r=\{(x,y)\in X\times X:d(x,y)<r\}$.
Similarly, we define the space $\mathcal{KS}^{\lambda,1}(X)$ to be those $f\in L^1(X,\mu)$ for which
\[
\Vert f\Vert_{\mathcal{KS}^{\lambda,1}(X)}:=\sup_{r>0}\iint_{\Delta_r}\frac{|f(y)-f(x)|}{r^\lambda\mu(B(x,r))}\, d\mu(y)\, d\mu(x)<+\infty.
\]

\begin{proposition}\label{KS-BEsov}
For $\lambda >0$,
\[
KS^{\lambda,1}(X)=\mathcal{KS}^{\lambda,1}(X)=\mathbf{B}^{1,\frac{\lambda}{d_W}}(X).
\]
Moreover, $\mathcal{KS}^{\lambda,1}(X)$ and $\mathbf{B}^{1,\frac{\lambda}{d_W}}(X)$ have equivalent norms.
\end{proposition}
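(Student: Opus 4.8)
The plan is to reduce both Korevaar--Schoen quantities to the Besov seminorms $N_1^{\lambda}(f,r)$ from~\eqref{eq:Besov-seminorm-r} and then invoke Theorem~\ref{Besov characterization} with $p=1$ and $\alpha=\lambda$. The essential algebraic fact is that the integrand defining the two $KS$ norms, at a fixed scale $r$, is comparable to $N_1^{\lambda}(f,r)$ uniformly in $r$, the only discrepancy being the factor $\mu(B(x,r))$ versus $r^{d_H}$.

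First I would carry out this scale-by-scale comparison. By Ahlfors $d_H$-regularity, $c_1 r^{d_H}\le\mu(B(x,r))\le c_2 r^{d_H}$ uniformly in $x$, and since
\[
\int_X\int_{B(x,r)}|f(y)-f(x)|\,d\mu(y)\,d\mu(x)=\iint_{\{d(x,y)<r\}}|f(x)-f(y)|\,d\mu(x)\,d\mu(y),
\]
one obtains, for every $r>0$,
\[
\frac{1}{c_2}\,N_1^{\lambda}(f,r)\le\int_X\int_{B(x,r)}\frac{|f(y)-f(x)|}{r^{\lambda}\mu(B(x,r))}\,d\mu(y)\,d\mu(x)\le\frac{1}{c_1}\,N_1^{\lambda}(f,r).
\]
Taking $\limsup_{r\to0^+}$ and $\sup_{r>0}$ respectively yields $\Vert f\Vert_{KS^{\lambda,1}(X)}\simeq\limsup_{r\to0^+}N_1^{\lambda}(f,r)$ and $\Vert f\Vert_{\mathcal{KS}^{\lambda,1}(X)}\simeq\sup_{r>0}N_1^{\lambda}(f,r)$, with constants depending only on $c_1,c_2$.

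Next I would read off the conclusions from Theorem~\ref{Besov characterization}. Its final assertion states that $f\in L^1(X,\mu)$ lies in $\B^{1,\lambda/d_W}(X)$ if and only if $\limsup_{r\to0^+}N_1^{\lambda}(f,r)<\infty$; combined with the comparison above this gives the set equality $KS^{\lambda,1}(X)=\B^{1,\lambda/d_W}(X)$. The ``in particular'' part of the same theorem gives $\Vert f\Vert_{1,\lambda/d_W}\simeq\sup_{s>0}N_1^{\lambda}(f,s)$, and chaining this with the comparison produces $\Vert f\Vert_{\mathcal{KS}^{\lambda,1}(X)}\simeq\Vert f\Vert_{1,\lambda/d_W}$, which simultaneously yields $\mathcal{KS}^{\lambda,1}(X)=\B^{1,\lambda/d_W}(X)$ and the asserted norm equivalence.

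The only point deserving attention is the inclusion $KS^{\lambda,1}(X)\subseteq\mathcal{KS}^{\lambda,1}(X)$, that is, why finiteness of the integral merely as $r\to0^+$ already controls it at all scales. This is precisely what the two-sided estimate of Theorem~\ref{Besov characterization} encodes: the large-scale contribution is absorbed into the term $r^{-\lambda}\Vert f\Vert_{L^1(X,\mu)}$. Indeed, the crude bound $|f(x)-f(y)|\le|f(x)|+|f(y)|$ together with Ahlfors regularity gives $N_1^{\lambda}(f,r)\le 2c_2\,r^{-\lambda}\Vert f\Vert_{L^1(X,\mu)}$ for every $r$, so a finite $\limsup$ at small scales, combined with membership in $L^1(X,\mu)$, forces the full supremum to be finite. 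Hence no argument beyond the comparison and Theorem~\ref{Besov characterization} is required.
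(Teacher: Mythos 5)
Your proposal is correct and takes essentially the same route as the paper's proof: both reduce the Korevaar--Schoen quantities to the seminorms $N_1^{\lambda}(f,r)$ via Ahlfors regularity, invoke Theorem~\ref{Besov characterization} for the identification with $\mathbf{B}^{1,\lambda/d_W}(X)$ and the norm equivalence, and settle the only nontrivial inclusion $KS^{\lambda,1}(X)\subseteq\mathcal{KS}^{\lambda,1}(X)$ by the same small/large-scale split, bounding the large scales by $N_1^{\lambda}(f,r)\le Cr^{-\lambda}\Vert f\Vert_{L^1(X,\mu)}$ via $|f(x)-f(y)|\le|f(x)|+|f(y)|$. The only cosmetic difference is that you obtain the set equality $KS^{\lambda,1}(X)=\mathbf{B}^{1,\lambda/d_W}(X)$ directly from the $\limsup$ criterion in Theorem~\ref{Besov characterization}, while the paper first proves $KS^{\lambda,1}(X)=\mathcal{KS}^{\lambda,1}(X)$ and then applies the theorem; the mathematical content is identical.
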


\begin{remark}
We note that $KS^{\lambda,1}(X)$ and $\mathcal{KS}^{\lambda,1}(X)$ do not have equivalent norms in general, but that the inequality $\Vert f\Vert_{KS^{\lambda,1}(X)} \le \Vert f\Vert_{\mathcal{KS}^{\lambda,1}(X)}$ is always true. 
\end{remark}

\begin{proof}
The fact that $\mathcal{KS}^{\lambda,1}(X)$ and $\mathbf{B}^{1,\frac{\lambda}{d_W}}(X)$ are equal with comparable norms follows from Theorem~\ref{Besov characterization}, and the inclusion $ \mathcal{KS}^{\lambda,1}(X) \subset KS^{\lambda,1}(X)$ is immediate. Thus it remains to prove that $KS^{\lambda,1}(X) \subset \mathcal{KS}^{\lambda,1}(X)$. If $f\in KS^{\lambda,1}(X)$, take $\varepsilon>0$ so that
we have 
\[
\sup_{r\in(0,\varepsilon]} \iint_{\Delta_r} \frac{|f(y)-f(x)|}{r^\lambda \mu(B(x,r))}\, d\mu(y)\, d\mu(x) \le 2\|f\|_{KS^{\lambda,1}}.
\]
Using $|f(x)-f(y)|\leq |f(x)|+|f(y)|$ and
$c_{1}r^{d_{H}} \le \mu\bigl(B(x,r)\bigr)\leq c_{2}r^{d_{H}}$, one has for $r \ge \ve$
\begin{align*}
\iint_{\Delta_r}\frac{|f(y)-f(x)|}{r^\lambda \mu(B(x,r))}\, d\mu(y)\, d\mu(x)
&\leq\frac{C}{r^{\lambda+d_{H}}}\iint_{\Delta_r}\bigl(|f(x)|+|f(y)|\bigr)\,d\mu(x)\,d\mu(y)\\
&\leq \frac{C}{r^{\lambda+d_{H}}}\int_X|f(y)|\mu\bigl(B(y,r)\bigr)\,d\mu(y)\\
 &\le \frac{C}{r^{\lambda}}\|f\|_{L^{1}(X,\mu)}\leq\frac{C}{\varepsilon^{\lambda}}\|f\|_{L^{1}(X,\mu)}.
\end{align*}
Combining this with the estimate when $r\in(0,\varepsilon]$ gives $\|f\|_{ \mathcal{KS}^{\lambda,1}}\leq C \max \bigl\{\varepsilon^{-\lambda}\|f\|_{L^{1}(X)}, 2\|f\|_{KS^{\lambda,1}}\bigr\}$.
\end{proof}

In this, and what follows, we set
\begin{equation}\label{e:lambda-critical}
\begin{aligned}
	\lambda_1^*&:=\sup \{ \lambda >0\, :\, KS^{\lambda,1}(X) \text{ is dense in } L^1(X,\mu)\},\\
	\lambda_1^\#&:=\sup \{ \lambda >0\, :\, KS^{\lambda,1}(X) \text{ contains non-constant functions.}\}.
	\end{aligned}
	\end{equation}

\begin{remark}\label{Remark KS Dirichlet}
Though it is not directly relevant to the $L^1$ theory of BV functions developed here, we point out it is possible to prove that for the $L^2$ critical exponents one has $\lambda_2^*=\lambda_2^\#=\frac{d_W}{2}$ and that $KS^{\frac{d_W}{2},2}(X)=\mathcal{KS}^{\frac{d_W}{2},2}(X)=\mathcal F$ with comparable norms. The proof will appear in a later work.
\end{remark}

We can now define the BV class.  
\begin{definition}\label{def-BV}
Let $BV(X):=KS^{\lambda^\#_1,1}(X)$and for $f \in BV(X)$ define
\[
\mathbf{Var} (f):=\liminf_{r\to 0^+}\iint_{\Delta_r}\frac{|f(y)-f(x)|}{r^{\lambda_1^\#} \mu(B(x,r))}\, d\mu(y)\, d\mu(x).
\]
\end{definition}
Note that from Proposition \ref{KS-BEsov} one has $BV(X)=\mathbf{B}^{1,\alpha_1^\#}(X)$, where $\alpha_1^\#=\frac{\lambda_1^\#}{d_W}$ is the $L^1$--Besov critical exponent.

\begin{example}
For nested fractals $\lambda_1^\#=\lambda_1^*=d_W\alpha_1^*=d_H$, as we will see in Theorem~\ref{T:nested}.  For the Sierpinski carpet one conjectures $\lambda_1^\#=\lambda_1^*=d_H-d_{tH}+1$, where $d_{tH}=\dfrac{\ln 2}{\ln 3}+1$ is the topological-Hausdorff dimension  of the Sierpinski carpet defined in \cite[Theorem 5.4]{BBE15}, see Conjecture~\ref{conj-SC}.
\end{example}

\subsection{Locality property of BV functions}\label{S:BVfunctions}

Throughout the remainder of this section we make the following crucial assumption. 
\begin{assumption}\label{assumpt:wBEatcrit}
$X$ satisfies $wBE(\kappa)$ with  $\kappa=d_W-\lambda_1^\#=d_W(1-\alpha_1^\#).$
\end{assumption}

From Lemma~\ref{non-unique kappa} and Theorem~\ref{estimate kappa alpha}, this implies that $$\kappa=\sup \{ \kappa':  X \text{ satisfies } wBE(\kappa')  \}.$$

\begin{example}\label{example-nested}
From Theorems \ref{thm-xBE-FD} and \ref{tensorization}, if $X$ is a nested fractal, then for every $n \ge 1$, $X^n$ satisfies $wBE(\kappa)$ with $\kappa=d_W(1-\alpha_1^*)=d_W-\dim_H(X)$.
\end{example}

\begin{remark}\label{rem-wBE*}
If $X$ satisfies $wBE(\kappa)$ with  $\kappa=d_W-\lambda_1^*=d_W(1-\alpha_1^*)$, then from Theorem \ref{estimate kappa alpha}, one has $\lambda_1^*=\lambda_1^\#$ and thus Assumption \ref{assumpt:wBEatcrit} is satisfied.
\end{remark}

 For $f \in L^1(X,\mu)$, let
\begin{equation}
\mathcal{M}_r f (x)=\frac{1}{r^{d_W-\kappa}\mu(B(x,r))} \int_{B(x,r)} |f(x) -f(y)| d\mu(y) 
\end{equation}
so that
\begin{equation}\label{eq:VarintermsofMf}
\mathbf{Var} (f)=\liminf_{r\to 0^+}\int_X\mathcal{M}_r f (x)\, d\mu(x).
\end{equation}

An important result is that $wBE(\kappa)$ implies the following locality property of the $\|\cdot\|_{1,1-\frac\kappa{d_W}}$  Besov seminorm:

\begin{theorem}\label{bounded embedding}
There exist constants $c,C>0$ such that for every $f \in BV(X)$,
\[
c \mathbf{Var} (f) \le \| f \|_{1,1-\frac{\kappa}{d_W}} \le C \mathbf{Var} (f),
\]
where we recall that $\|\cdot\|_{1,1-\frac\kappa{d_W}}$ is the Besov seminorm for $\B^{1,1-\kappa/d_W}(X)$, see~\eqref{eq:defnofheatbesovnorm}. Consequently there is $C>0$ such that for every $f \in BV(X)$,
\[
\sup_{r >0} \int_X \mathcal{M}_r f (y) d\mu(y) \le C\liminf_{r \to 0^+} \int_X \mathcal{M}_r f (y) d\mu(y),
\]
and the two Korevaar-Schoen spaces $KS^{\lambda,1}(X)$ and $\mathcal{KS}^{\lambda,1}(X)$ have equivalent norms for $\lambda=d_W-\kappa$.
\end{theorem}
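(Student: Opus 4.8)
The plan is to translate everything into the heat–semigroup Besov seminorm and to drive the argument with the pseudo-Poincaré inequality of Proposition~\ref{pseudo poincare}. First I would set up the dictionary between the three quantities at stake. By Ahlfors regularity, $\int_X\mathcal{M}_rf\,d\mu\simeq N^{d_W-\kappa}_1(f,r)$, so that $\mathbf{Var}(f)\simeq\liminf_{r\to0^+}N^{d_W-\kappa}_1(f,r)$ by \eqref{eq:VarintermsofMf}, while Theorem~\ref{Besov characterization} applied with $p=1$, $\alpha=d_W-\kappa$ (note $\alpha/d_W=1-\kappa/d_W$) gives $\|f\|_{1,1-\kappa/d_W}\simeq\sup_{r>0}N^{d_W-\kappa}_1(f,r)$. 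With this dictionary the left inequality $c\,\mathbf{Var}(f)\le\|f\|_{1,1-\kappa/d_W}$ is immediate, since a liminf is dominated by a supremum; this direction uses neither $wBE(\kappa)$ nor Assumption~\ref{assumpt:wBEatcrit}.

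The entire content is thus the right inequality, equivalently the self-improvement $\sup_{r>0}N^{d_W-\kappa}_1(f,r)\le C\liminf_{r\to0^+}N^{d_W-\kappa}_1(f,r)$, i.e.\ exactly the displayed ``consequently'' statement. Here I would work with $\Phi(t):=\int_X\int_X p_t(x,y)|f(x)-f(y)|\,d\mu(x)\,d\mu(y)$. Conservativeness $\int p_t(x,y)\,d\mu(x)=1$ and Jensen give the cheap bound $\|P_tf-f\|_{L^1}\le\Phi(t)$, whereas Proposition~\ref{pseudo poincare} with $p=1$ and $\beta_1=1-\kappa/d_W$ supplies the estimate in the opposite spirit,
\[
\sup_{t>0}t^{-\beta_1}\|P_tf-f\|_{L^1}\le C\,\liminf_{\tau\to0^+}\tau^{-\beta_1}\Phi(\tau).
\]
The role of $wBE(\kappa)$ at the critical value in Assumption~\ref{assumpt:wBEatcrit} is precisely that $\beta_1$ coincides with the Besov exponent $\alpha_1^\#=1-\kappa/d_W$ defining $BV(X)$, so this estimate is scale-sharp. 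If one can additionally prove the De Giorgi–Ledoux type equivalence $\|f\|_{1,\beta_1}\le C\sup_{t>0}t^{-\beta_1}\|P_tf-f\|_{L^1}$, then chaining the two inequalities closes the loop and yields $\|f\|_{1,\beta_1}\le C\liminf_{\tau\to0^+}\tau^{-\beta_1}\Phi(\tau)$, from which the right inequality follows.

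The step I expect to be the main obstacle is exactly this reconstruction of the full seminorm from the cancellative quantity $\|P_tf-f\|_{L^1}$: $\Phi(t)$ carries no cancellation, and a naive split $|f(x)-f(y)|\le|f(x)-P_sf(x)|+|P_sf(x)-P_sf(y)|+|P_sf(y)-f(y)|$ forces, through $wBE(\kappa)$ and \eqref{eq:boundpolyptbypct}, a middle term whose crude bound carries a factor $\mu(X)$ and is therefore useless on an unbounded space. Instead I would split $\Phi(t)$ at the natural scale $r=t^{1/d_W}$: the near-diagonal part is controlled cleanly by the upper heat-kernel bound $p_t\le c_3 t^{-d_H/d_W}$, giving $\int\!\int_{d(x,y)<t^{1/d_W}}p_t|f(x)-f(y)|\le C\,t^{\beta_1}N^{d_W-\kappa}_1(f,t^{1/d_W})$, while the far part must be absorbed using the kernel regularisation \eqref{eqn:wBEimpliesptHolder} and the analyticity of $P_t$ (via the time-derivative kernel bound already invoked in the proof of Theorem~\ref{thm-xBE-FD}), rather than the lossy pointwise $L^\infty$ estimate. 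The subadditivity $\Phi(a+b)\le\Phi(a)+\Phi(b)$, which follows at once from the Chapman–Kolmogorov identity, is a useful auxiliary tool but by itself matches only the scaling $t^1$ and not the critical $t^{\beta_1}$, so the delicate point remains controlling the far-field contribution by the \emph{liminf} and not merely by the supremum.

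Finally I would read off the two remaining assertions. The inequality $\sup_{r>0}\int_X\mathcal{M}_rf\le C\liminf_{r\to0^+}\int_X\mathcal{M}_rf$ is just the right inequality re-expressed through the dictionary of the first paragraph. For the last claim, Proposition~\ref{KS-BEsov} already gives $\mathcal{KS}^{\lambda,1}(X)=\mathbf{B}^{1,\lambda/d_W}(X)$ and $\|f\|_{KS^{\lambda,1}}\le\|f\|_{\mathcal{KS}^{\lambda,1}}$; specialising to $\lambda=d_W-\kappa=\lambda_1^\#$ and using that the supremum, limsup and liminf of $N^{d_W-\kappa}_1(f,\cdot)$ are now mutually comparable, the reverse comparison $\|f\|_{\mathcal{KS}^{\lambda,1}}\le C\|f\|_{KS^{\lambda,1}}$ follows, so the two Korevaar–Schoen norms are equivalent at $\lambda=d_W-\kappa$.
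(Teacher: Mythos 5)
Your dictionary between $\mathcal{M}_r$, $N_1^{d_W-\kappa}$ and the heat seminorm, the easy left inequality $c\,\mathbf{Var}(f)\le\|f\|_{1,1-\kappa/d_W}$, and the reading-off of the two ``consequently'' statements all agree with the paper. But the two steps that carry the entire content of the theorem are left unproven, and your sketches for them would not close as stated. The reconstruction $\|f\|_{1,\beta_1}\le C\sup_{t>0}t^{-\beta_1}\|P_tf-f\|_{L^1(X,\mu)}$ is introduced with ``if one can additionally prove'' and never established; it is not among the paper's results, and it is exactly where the absence of cancellation in the double integral bites. The paper neither proves nor needs such an estimate for general $f$: it instead reduces to indicator functions through a co-area/Fubini decomposition (Lemma~\ref{lem:Fubini for BV estimate}), exploiting that for an indicator the cancellative and non-cancellative quantities coincide --- by conservativeness, $\|P_s\mathbf{1}_{E_t}-\mathbf{1}_{E_t}\|_{L^1(X,\mu)}=\int_{X\setminus E_t}P_s\mathbf{1}_{E_t}\,d\mu+\int_{E_t}P_s\mathbf{1}_{X\setminus E_t}\,d\mu$, which is precisely the double integral $\int_X\int_X p_s(x,y)|\mathbf{1}_{E_t}(x)-\mathbf{1}_{E_t}(y)|\,d\mu\,d\mu$. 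The pseudo-Poincar\'e inequality of Proposition~\ref{pseudo poincare} with $p=1$ is then applied \emph{levelwise} to each $\mathbf{1}_{E_t}$, and the levels are resummed via Lemma~\ref{lem:intVar*byVar*} to give $\int_X\int_X p_s(x,y)|f(x)-f(y)|\,d\mu\,d\mu\le Cs^{1-\kappa/d_W}\mathbf{Var}_*(f)$, i.e.\ Lemma~\ref{Lemma 2:BV embed}, with no De Giorgi--Ledoux equivalence for $f$ itself. This level-set reduction is the missing idea in your plan.

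The second gap: even granting the above, the right inequality still requires $\mathbf{Var}_*(f)\le C\,\mathbf{Var}(f)$, that is, domination of the heat-scale liminf by the metric-scale liminf. You correctly flag this as the delicate point, but the tools you propose (the H\"older kernel bound \eqref{eqn:wBEimpliesptHolder} and analyticity of $P_t$) are neither used nor needed; the paper's Lemma~\ref{comparison var} handles it with the sub-Gaussian bounds alone. Splitting at scale $\delta t^{1/d_W}$ gives $\Psi(t)\le\Phi(t)+A\Psi(ct)$, where $\Psi(t)=t^{-(1-\kappa/d_W)}\int_X\int_X p_t(x,y)|f(x)-f(y)|\,d\mu\,d\mu$, $\Phi(t)=C\delta^{d_W+d_H-\kappa}\int_X\mathcal{M}_{\delta t^{1/d_W}}f\,d\mu$, and $A\le Ce^{-c'\delta^{d_W/(d_W-1)}}$ is made less than $\tfrac12$ by taking $\delta$ large. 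One then iterates along $t=c^{-n}$, using the a priori bound $\limsup_{r\to0^+}\int_X\mathcal{M}_rf\,d\mu<\infty$ (available because $f\in BV(X)$) to conclude that $\Psi$ has finite liminf, and finally absorbs the term $A\Psi(ct)$ at the liminf level. Your split at $r=t^{1/d_W}$ is the right starting move, but without this absorption-and-iteration argument and the a priori finiteness step, the far-field contribution is only controlled by a supremum rather than the liminf, and the proof does not close.
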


We divide the proof of Theorem \ref{bounded embedding} into several lemmas.   For $f \in L^1(X,\mu)$, we will use the notation
\begin{gather}
\mathbf{Var}_* (f)
 =\liminf_{t \to 0^+}\frac{1}{ t^{1-\frac{\kappa}{d_W}}}\int_X\int_X p_{t}(x,y) |f(x)-f(y)| \,d\mu(x)\,d\mu(y).
  \end{gather}

The next two results are useful consequences of the Fubini theorem. In this and several later results, for $f\in L^1(X,\mu)$ we use the notation $E_t(f)=\{x\in X: f(x)>t\}$. We note that if $f \ge 0$ a.e. and $t>0$, then $\mathbf{1}_{E_t(f)} \in L^1(X,\mu)$ and therefore Proposition \ref{pseudo poincare} applies to $\mathbf{1}_{E_t(f)}$.

\begin{lemma}\label{lem:Fubini for BV estimate}
If $f\in L^1(X,\mu)$ and $g \in L^\infty (X,\mu)$, $f,g \ge 0$ a.e., then
\begin{multline*}
\int_X\int_X g(y)p_s(x,y)|f(x)-f(y)|\, d\mu(x)d\mu(y)
\\ \leq \int_0^{\infty} \biggl( \bigl\|P_s(g\mathbf{1}_{E_t(f)})-g\mathbf{1}_{E_t(f)}\bigr\|_{L^1(X,\mu)}+ \bigl\|P_s(g\mathbf{1}_{X\setminus E_t(f)})-g\mathbf{1}_{X\setminus E_t(f)}\bigr\|_{L^1(X,\mu)}\biggl)\, dt.
\end{multline*}
In particular, if $g\equiv1$ then
\[
\int_X\int_X p_s(x,y)|f(x)-f(y)|\, d\mu(x)d\mu(y)
\leq 2 \int_0^{\infty} \bigl\|P_s(\mathbf{1}_{E_t(f)})-\mathbf{1}_{E_t(f)}\bigr\|_{L^1(X,\mu)}\, dt.
\]
\end{lemma}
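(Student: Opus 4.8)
The plan is to exploit the ``layer-cake'' (coarea) decomposition of a nonnegative function $f$ as $f(x)=\int_0^\infty \mathbf{1}_{E_t}(x)\,dt$, where $E_t=\{x:f(x)>t\}$, and to combine it with the elementary pointwise identity that controls the difference $|f(x)-f(y)|$ by the measure of the levels separating $x$ and $y$. First I would record the key observation that, for any $x,y$ and any nonnegative $f$,
\[
|f(x)-f(y)|=\int_0^\infty \bigl|\mathbf{1}_{E_t}(x)-\mathbf{1}_{E_t}(y)\bigr|\,dt,
\]
since $\mathbf{1}_{E_t}(x)-\mathbf{1}_{E_t}(y)$ is $\pm1$ exactly when $t$ lies between $f(x)$ and $f(y)$ and is $0$ otherwise. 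This is the standard coarea representation of the absolute difference.

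Next I would substitute this representation into the left-hand integral and apply the Fubini--Tonelli theorem to interchange the $t$-integral with the $(x,y)$-integration; all integrands are nonnegative, so Tonelli applies without integrability concerns. This yields
\[
\int_X\int_X g(y)p_s(x,y)|f(x)-f(y)|\,d\mu(x)\,d\mu(y)
=\int_0^\infty\!\!\int_X\int_X g(y)p_s(x,y)\bigl|\mathbf{1}_{E_t}(x)-\mathbf{1}_{E_t}(y)\bigr|\,d\mu(x)\,d\mu(y)\,dt.
\]
Then for fixed $t$ I would bound the inner double integral. Writing $h=g\mathbf{1}_{E_t}$ and using $p_s(x,y)\ge 0$ together with the symmetry of the kernel, the inner integral should be comparable to (at most twice) the quantity $\int_X |h - P_s h|\,d\mu$; concretely one expands $g(y)|\mathbf{1}_{E_t}(x)-\mathbf{1}_{E_t}(y)|$, uses the inequality $g(y)\le \tfrac12(g(x)+g(y))+\tfrac12(g(y)-g(x))$ or more simply estimates $g(y)|\mathbf{1}_{E_t}(x)-\mathbf{1}_{E_t}(y)|\le g(x)\mathbf{1}_{E_t}(y)\mathbf{1}_{E_t^c}(x)+g(y)\mathbf{1}_{E_t}(x)\mathbf{1}_{E_t^c}(y)+\cdots$, and integrates against $p_s(x,y)$ to produce terms of the form $\int_X |h(x)-P_s h(x)|\,d\mu(x)$. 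The factor $2$ in the statement accounts for the two ways a level set can separate $x$ from $y$.

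The main obstacle, and the step requiring the most care, is this last algebraic manipulation identifying the fixed-$t$ integral with $\|P_s h - h\|_{L^1}$ (up to the constant $2$): one must correctly track how the weight $g(y)$ and the indicator structure combine with the semigroup, since $\int_X\int_X h(x)p_s(x,y)\,d\mu(x)\,d\mu(y)=\int_X P_s h\,d\mu$ relies on conservativeness $P_s\mathbf{1}=1$, and the cross terms must be organized so that the nonnegativity of $h$ and of $p_s$ lets one drop a sign and pass to $|P_s h - h|$. The specialization to $g\equiv 1$ is then immediate, giving the displayed inequality with $h=\mathbf{1}_{E_t}$.
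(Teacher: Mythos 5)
Your opening moves coincide with the paper's: the layer-cake identity $|f(x)-f(y)|=\int_0^\infty |\mathbf{1}_{E_t}(x)-\mathbf{1}_{E_t}(y)|\,dt$ followed by Tonelli (the paper phrases it as integrating $\int_{f(x)}^{f(y)}dt$ over $A=\{f(x)<f(y)\}$, which is the same thing). The genuine gap is the step you explicitly defer, the fixed-$t$ bound, and it cannot be closed in the form you aim for. Expanding $|\mathbf{1}_{E_t}(x)-\mathbf{1}_{E_t}(y)|=\mathbf{1}_{E_t}(x)\mathbf{1}_{X\setminus E_t}(y)+\mathbf{1}_{X\setminus E_t}(x)\mathbf{1}_{E_t}(y)$ and integrating against $g(y)p_s(x,y)$ gives two terms: one is $\int_{X\setminus E_t}P_s(g\mathbf{1}_{E_t})\,d\mu$, which is fine because $g\mathbf{1}_{E_t}$ vanishes on $X\setminus E_t$, but the other is $\int_{X\setminus E_t} g\,P_s(\mathbf{1}_{E_t})\,d\mu$, with $g$ sitting \emph{outside} the semigroup, and no pointwise rearrangement of the type you sketch converts it into $\|P_s(g\mathbf{1}_{E_t})-g\mathbf{1}_{E_t}\|_{L^1}$. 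Indeed, with the right-hand side as stated the inequality is false for general $0\le g\in L^\infty$: take $f=\mathbf{1}_E$ with $0<\mu(E)<\infty$ (say $E$ a ball; note $\mu(X)=\infty$ by Ahlfors regularity) and $g=\mathbf{1}_{X\setminus E}$. Then $E_t=E$ for $t\in(0,1)$ and $E_t=\emptyset$ for $t\ge1$, so $g\mathbf{1}_{E_t}\equiv 0$ and the right-hand side vanishes, while the left-hand side equals $\int_{X\setminus E}P_s(\mathbf{1}_E)\,d\mu>0$ by positivity of the heat kernel. So the ``care'' you postponed is not routine bookkeeping; it is the crux, and the target you set for it is unattainable for general $g$.

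What the paper actually does is symmetrize \emph{before} the layer-cake step: by symmetry of $p_s$, the left-hand side equals $\int_A (g(x)+g(y))\,p_s(x,y)|f(x)-f(y)|\,d\mu(x)\,d\mu(y)$ with $A=\{f(x)<f(y)\}$, and after Fubini the two resulting terms are $\int_{X\setminus E_t}P_s(g\mathbf{1}_{E_t})\,d\mu$ and $\int_{E_t}P_s(g\mathbf{1}_{X\setminus E_t})\,d\mu$, both with $g$ \emph{inside} $P_s$; each is then an $L^1$ distance because $g\mathbf{1}_{E_t}$ vanishes off $E_t$ and $g\mathbf{1}_{X\setminus E_t}$ vanishes on $E_t$. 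Note that even this argument yields the two-term bound $\int_0^\infty\bigl(\|P_s(g\mathbf{1}_{E_t})-g\mathbf{1}_{E_t}\|_{L^1(X\setminus E_t)}+\|P_s(g\mathbf{1}_{X\setminus E_t})-g\mathbf{1}_{X\setminus E_t}\|_{L^1(E_t)}\bigr)dt$, whose second summand involves $g\mathbf{1}_{X\setminus E_t}$ rather than $g\mathbf{1}_{E_t}$; it collapses to the stated right-hand side when $g\equiv 1$, since conservativeness gives $P_s\mathbf{1}_{X\setminus E_t}=1-P_s\mathbf{1}_{E_t}$ --- which is exactly the special case your plan does handle correctly (you rightly identify where $P_s\mathbf{1}=1$ enters), and the case needed for the co-area estimates. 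For general $g$ your step three should be replaced by the symmetrization and the two-term bound above.
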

\begin{proof}
Let $A=\{(x,y)\in X\times X\, :\, f(x)<f(y)\}$. Then for $s>0$,
\begin{align*}
 \lefteqn{ \int_X\int_X g(y) p_s(x,y) |f(x)-f(y)|\, d\mu(x)d\mu(y)}\quad& \\
 &=\int_A (g(x)+g(y))p_s(x,y)|f(x)-f(y)|\, d\mu(x)d\mu(y)\\
 &=\int_A \int_{f(x)}^{f(y)}dt\, p_s(x,y) (g(x)+g(y))d\mu(x)d\mu(y)\\
 &=\int_X\int_X \int_0^{\infty} \mathbf{1}_{[f(x),f(y))}(t) \, dt \mathbf{1}_A(x,y)\, p_s(x,y) (g(x)+g(y))d\mu(x)d\mu(y) \\
 &=\int_0^{\infty} \int_X\int_X \mathbf{1}_{E_t(f)}(y)[1-\mathbf{1}_{E_t(f)}(x)]\, p_s(x,y)\, (g(x)+g(y))d\mu(x)d\mu(y)\, dt\\
 &=\int_0^\infty \int_{X\setminus E_t(f)}P_s(g\mathbf{1}_{E_t(f)})(x)\, d\mu(x)\, dt+\int_0^\infty \int_{ E_t(f)}P_s(g\mathbf{1}_{X\setminus E_t(f)})(x)\, d\mu(x)\, dt. 
\end{align*}
The result now follows from the fact that $|P_s(g\mathbf{1}_{E_t(f)})-g\mathbf{1}_{E_t(f)}|=|P_s(g\mathbf{1}_{E_t(f)})|$ on $X\setminus E_t(f)$ and similarly for the other term.

In particular, when $g\equiv1$, there holds $\int_{X\setminus E_t(f)}P_s(\mathbf{1}_{E_t(f)})(x)\, d\mu(x)=\int_{ E_t(f)}P_s(\mathbf{1}_{X\setminus E_t(f)})(x)\, d\mu(x)$. Thus we conclude the proof.
\end{proof}

\begin{lemma}\label{lem:intVar*byVar*}
If $f\in L^1(X,\mu)$, $f \ge 0$ a.e., then
\begin{equation*}
 \int_0^ {\infty} \mathbf{Var}_*(\mathbf{1}_{E_t(f)})\,dt \leq \mathbf{Var}_* (f).
 \end{equation*}
\end{lemma}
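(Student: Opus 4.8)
The plan is to reduce the $t$-integral of the level-set quantities $\mathbf{Var}_*(\mathbf{1}_{E_t})$ to $\mathbf{Var}_*(f)$ via an exact Fubini/layer-cake identity at each fixed time $s$, and then to move the $\liminf_{s\to0^+}$ inside the $t$-integral using Fatou's lemma.

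First I would record the pointwise-in-$s$ identity underlying Lemma~\ref{lem:Fubini for BV estimate} in its $g=1$ form. For a Borel set $A$ of finite measure, conservativeness $P_s1=1$ and the symmetry of $p_s$ give
\[
\bigl\|P_s\mathbf{1}_A-\mathbf{1}_A\bigr\|_{L^1(X,\mu)}=\int_X\int_X p_s(x,y)\,\bigl|\mathbf{1}_A(x)-\mathbf{1}_A(y)\bigr|\,d\mu(x)\,d\mu(y),
\]
since on $X\setminus A$ one has $|P_s\mathbf{1}_A-\mathbf{1}_A|=\int_A p_s(\cdot,y)\,d\mu(y)$ while on $A$ one has $|P_s\mathbf{1}_A-\mathbf{1}_A|=\int_{X\setminus A}p_s(\cdot,y)\,d\mu(y)$, and both integrate to $\int_A\int_{X\setminus A}p_s$. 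Applying this with $A=E_t$ (which has finite measure for $t>0$ by Chebyshev) identifies $\mathbf{Var}_*(\mathbf{1}_{E_t})$ with $\liminf_{s\to0^+}s^{-(1-\kappa/d_W)}\|P_s\mathbf{1}_{E_t}-\mathbf{1}_{E_t}\|_{L^1(X,\mu)}$.

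Next I would integrate in $t$ and apply Tonelli (all integrands are nonnegative). Because $f\ge0$ a.e., the layer-cake identity $\int_0^\infty\bigl|\mathbf{1}_{E_t}(x)-\mathbf{1}_{E_t}(y)\bigr|\,dt=|f(x)-f(y)|$ holds for each fixed pair $(x,y)$, which yields the \emph{exact} equality
\[
\int_0^\infty \bigl\|P_s\mathbf{1}_{E_t}-\mathbf{1}_{E_t}\bigr\|_{L^1(X,\mu)}\,dt=\int_X\int_X p_s(x,y)\,|f(x)-f(y)|\,d\mu(x)\,d\mu(y).
\]
Writing $h_s(t)=s^{-(1-\kappa/d_W)}\|P_s\mathbf{1}_{E_t}-\mathbf{1}_{E_t}\|_{L^1(X,\mu)}\ge0$, this says $\int_0^\infty h_s(t)\,dt=s^{-(1-\kappa/d_W)}\int_X\int_X p_s|f(x)-f(y)|\,d\mu$, whose $\liminf$ as $s\to0^+$ is precisely $\mathbf{Var}_*(f)$, while $\mathbf{Var}_*(\mathbf{1}_{E_t})=\liminf_{s\to0^+}h_s(t)$.

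Finally I would select a sequence $s_n\to0^+$ along which $\int_0^\infty h_{s_n}(t)\,dt\to\mathbf{Var}_*(f)$, and apply Fatou's lemma to $(h_{s_n})_n$, using that a subsequential $\liminf$ dominates the full $\liminf$, i.e. $\liminf_n h_{s_n}(t)\ge\liminf_{s\to0^+}h_s(t)=\mathbf{Var}_*(\mathbf{1}_{E_t})$. This chain gives
\[
\int_0^\infty \mathbf{Var}_*(\mathbf{1}_{E_t})\,dt\le\int_0^\infty\liminf_n h_{s_n}(t)\,dt\le\liminf_n\int_0^\infty h_{s_n}(t)\,dt=\mathbf{Var}_*(f),
\]
which is the claim. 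The only genuinely delicate point is this last step: since $\mathbf{Var}_*$ is a $\liminf$ over the \emph{continuous} parameter $s\to0^+$ rather than over a sequence, Fatou cannot be applied directly, and one must first pass to a minimizing sequence $s_n$ realizing $\mathbf{Var}_*(f)$ before invoking Fatou along that sequence; everything else is the exact identity above together with Tonelli.
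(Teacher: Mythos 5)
Your proposal is correct and is essentially the paper's own argument: both proofs rest on Tonelli together with the layer-cake identity $\int_0^\infty|\mathbf{1}_{E_t}(x)-\mathbf{1}_{E_t}(y)|\,dt=|f(x)-f(y)|$ for $f\ge0$, followed by Fatou's lemma to pull the $\liminf$ in the time parameter outside the $t$-integral. The only differences are cosmetic --- you route through the exact $L^1$ identity $\|P_s\mathbf{1}_{E_t}-\mathbf{1}_{E_t}\|_{L^1(X,\mu)}=\int_X\int_X p_s(x,y)|\mathbf{1}_{E_t}(x)-\mathbf{1}_{E_t}(y)|\,d\mu(x)\,d\mu(y)$ (the $g\equiv1$ case of Lemma~\ref{lem:Fubini for BV estimate}, where it is in fact an equality), whereas the paper works directly with the symmetrized double integral over $A=\{f(x)<f(y)\}$, and you make explicit the minimizing-sequence justification of Fatou for the continuous-parameter $\liminf$, a point the paper leaves implicit.
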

\begin{proof}
We compute by using Lemma \ref{lem:Fubini for BV estimate} and Fatou's lemma 
\begin{align*}
 \lefteqn{\int_0^\infty \mathbf{Var}_*(\mathbf{1}_{E_t(f)})\,dt} \quad&\\
 &= \int_0^\infty \liminf_{\tau \to 0^+}\frac{1}{ \tau^{1-\frac{\kappa}{d_W}}}\int_X\int_X p_{\tau }(x,y) |\mathbf{1}_{E_t(f)}(x)-\mathbf{1}_{E_t(f)}(y)| \,d\mu(x)\,d\mu(y) \, dt\\
 &\leq \liminf_{\tau \to 0^+}\frac{2}{ \tau^{1-\frac{\kappa}{d_W}}} \int_A p_{\tau }(x,y) \biggl( \int_0^{\infty} |\mathbf{1}_{E_t(f)}(x)-\mathbf{1}_{E_t(f)}(y)| dt \biggr) \,d\mu(x)\,d\mu(y) \\
 &\leq \liminf_{\tau \to 0^+}\frac{2}{ \tau^{1-\frac{\kappa}{d_W}}} \int_A p_{\tau }(x,y) (f(y)-f(x)) \,d\mu(x)\,d\mu(y) \\
 &= \liminf_{\tau \to 0^+}\frac{1}{ \tau^{1-\frac{\kappa}{d_W}}} \int_X \int_X p_{\tau }(x,y) |f(y)-f(x)| \,d\mu(x)\,d\mu(y).\qedhere
\end{align*}

\end{proof}
Combining our results thus far we have the following.
\begin{lemma}\label{Lemma 2:BV embed}
There is $C>0$ so for every $f \in L^1(X,\mu)$,
\[
 \| f \|_{1,1-\frac{\kappa}{d_W}} \le C \mathbf{Var}_* (f).
\]
\end{lemma}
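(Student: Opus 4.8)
The plan is to reduce to the case $f\ge 0$ and then chain together the three preceding results. Write $\Phi_f(t)=\int_X\int_X p_t(x,y)|f(x)-f(y)|\,d\mu(x)\,d\mu(y)$, so that $\|f\|_{1,1-\frac{\kappa}{d_W}}=\sup_{t>0}t^{-(1-\kappa/d_W)}\Phi_f(t)$ and $\mathbf{Var}_*(f)=\liminf_{t\to0^+}t^{-(1-\kappa/d_W)}\Phi_f(t)$. The whole point is that $wBE(\kappa)$, through the pseudo-Poincar\'e inequality, forces the ratio $t^{-(1-\kappa/d_W)}\Phi_f(t)$ to be bounded uniformly in $t$ by its $\liminf$ at $0$; recall that $\beta_1=1-\frac{\kappa}{d_W}$ by \eqref{eq:defnofbetap}, so the exponent appearing in $\|\cdot\|_{1,1-\kappa/d_W}$ and in $\mathbf{Var}_*$ is precisely $\beta_1$.

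First I would treat $f\ge0$. For each level $s>0$ the set $E_s=\{f>s\}$ has finite measure (since $\mu(E_s)\le s^{-1}\|f\|_{L^1}$), hence $\mathbf{1}_{E_s}\in L^1(X,\mu)$ and the pseudo-Poincar\'e inequality of Proposition~\ref{pseudo poincare} applies with $p=1$, giving $\|P_t\mathbf{1}_{E_s}-\mathbf{1}_{E_s}\|_{L^1(X,\mu)}\le C\,t^{\beta_1}\mathbf{Var}_*(\mathbf{1}_{E_s})$. Inserting this into the layer-cake estimate of Lemma~\ref{lem:Fubini for BV estimate} (applied with $g=1$) and then invoking Lemma~\ref{lem:intVar*byVar*} yields
\[
\Phi_f(t)\le 2\int_0^\infty\|P_t\mathbf{1}_{E_s}-\mathbf{1}_{E_s}\|_{L^1(X,\mu)}\,ds\le 2C\,t^{\beta_1}\int_0^\infty\mathbf{Var}_*(\mathbf{1}_{E_s})\,ds\le 2C\,t^{\beta_1}\mathbf{Var}_*(f).
\]
Dividing by $t^{\beta_1}=t^{1-\kappa/d_W}$ and taking the supremum over $t>0$ gives $\|f\|_{1,1-\kappa/d_W}\le 2C\,\mathbf{Var}_*(f)$, which is the claim for nonnegative $f$.

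For general $f\in L^1(X,\mu)$ I would split $f=f^+-f^-$ with $f^\pm\ge0$ and $f^\pm\in L^1(X,\mu)$. The key elementary observation is the pointwise identity $|f(x)-f(y)|=|f^+(x)-f^+(y)|+|f^-(x)-f^-(y)|$ (when $f(x),f(y)$ have the same sign one summand equals $|f(x)-f(y)|$ and the other vanishes, and when they have opposite signs the right side splits the jump into its positive and negative parts), which gives the exact decomposition $\Phi_f(t)=\Phi_{f^+}(t)+\Phi_{f^-}(t)$ for every $t$. From this, subadditivity of the supremum yields $\|f\|_{1,1-\kappa/d_W}\le\|f^+\|_{1,1-\kappa/d_W}+\|f^-\|_{1,1-\kappa/d_W}$, while superadditivity of the $\liminf$ gives $\mathbf{Var}_*(f^+)+\mathbf{Var}_*(f^-)\le\mathbf{Var}_*(f)$; applying the nonnegative case to $f^\pm$ then closes the estimate.

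The only genuine subtlety is exactly this passage from $f\ge0$ to general $f$: Lemmas~\ref{lem:Fubini for BV estimate} and~\ref{lem:intVar*byVar*} are stated only for nonnegative functions because their layer-cake proofs use level sets $E_s$ with $s>0$, and since $\mu(X)$ may be infinite one cannot simply translate $f$ to be nonnegative. The decomposition $f=f^+-f^-$ together with the identity $\Phi_f=\Phi_{f^+}+\Phi_{f^-}$ is what circumvents this. All remaining steps are direct substitutions and the verification that $\beta_1$ matches the two exponents, so I anticipate no further obstacles.
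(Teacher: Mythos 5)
Your proof is correct, and its core---the chain consisting of Lemma~\ref{lem:Fubini for BV estimate} with $g\equiv1$, then Proposition~\ref{pseudo poincare} with $p=1$ applied to $\mathbf{1}_{E_s}$, then Lemma~\ref{lem:intVar*byVar*}---is exactly the paper's argument. Where you diverge is the reduction from general $f\in L^1(X,\mu)$ to the nonnegative case: the paper sets $f_n=(f+n)_+$, runs the chain on $f_n$, and lets $n\to\infty$ (implicitly using that $t\mapsto (t+n)_+$ is $1$-Lipschitz, so $\mathbf{Var}_*(f_n)\le\mathbf{Var}_*(f)$, and that $|f_n(x)-f_n(y)|\to|f(x)-f(y)|$ pointwise), whereas you use $f=f^+-f^-$ together with the exact identity $|f(x)-f(y)|=|f^+(x)-f^+(y)|+|f^-(x)-f^-(y)|$, which does hold in all three sign configurations and yields $\Phi_f=\Phi_{f^+}+\Phi_{f^-}$; subadditivity of the supremum and superadditivity of the $\liminf$ then close the estimate with constant $2C$. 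Your route is arguably tidier on the one genuine subtlety you flagged: when $\mu(X)=\infty$, the paper's $f_n=(f+n)_+$ is not in $L^1(X,\mu)$ (it is bounded below by $n/2$ on a set of infinite measure), and its low level sets $E_t(f_n)=\{f>t-n\}$ for $0<t\le n$ can have infinite measure, so the paper's reduction tacitly requires the layer-cake lemmas and the pseudo-Poincar\'e input to extend beyond the $L^1$, finite-measure-level-set setting in which they are stated (the Tonelli-based Lemma~\ref{lem:Fubini for BV estimate} does extend in $[0,\infty]$, but the application of Proposition~\ref{pseudo poincare} to indicators of infinite-measure sets is delicate). Your decomposition keeps $f^\pm\in L^1(X,\mu)$ with $\mu(E_s(f^\pm))\le s^{-1}\|f^\pm\|_{L^1(X,\mu)}<\infty$ for every $s>0$, so each cited result applies verbatim and no limiting argument is needed; the only thing the paper's monotone truncation buys in exchange is brevity.
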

\begin{proof}
We first recall the $L^1$-pseudo-Poincar\'e inequality in Proposition \ref{pseudo poincare}, namely, for any $f\in L^1(X,\mu)$ and $t>0$,
\[
\| P_t f -f \|_{L^1(X,\mu)} \le C t^{1-\kappa/d_W}  \mathbf{Var}_*(f).
\]
Considering $f_n=(f+n)_{+}$ and letting $E_t(f_n)=\{x\in X: f_n(x)>t\}$, we apply the above pseudo-Poincar\'e inequality to the integrand in the expression in Lemma~\ref{lem:Fubini for BV estimate} with $g\equiv1$ and obtain from Lemma~\ref{lem:intVar*byVar*}
\begin{equation}\label{eq:Lemma2BVembedeq1} 
\begin{split}
\int_X\int_X p_s(x,y)|f_n(x)-f_n(y)|\, d\mu(x)d\mu(y)
&\le 2 \int_0^{\infty} \| P_s \mathbf 1_{E_t(f_n)} - \mathbf 1_{E_t(f_n)} \|_{L^1(X,\mu)} dt
 \\&\leq 
 C s^{1-\kappa/d_W} \int_{0}^{\infty} \mathbf{Var}_*(\mathbf{1}_{E_t(f_n)}) dt
\\ &\leq C s^{1-\kappa/d_W} \mathbf{Var}_*(f_n). 
 \end{split}
 \end{equation}
Letting $n \to \infty$, dividing by $s^{1-\kappa/d_W}$ and taking the supremum concludes the proof.
\end{proof}
%
%

Combining the preceeding results with arguments from Theorem~\ref{Besov characterization} proves our final lemma and completes the proof of Theorem~\ref{bounded embedding}.

\begin{lemma}\label{comparison var}
There exist constants $C_1,C_2>0$ such that for every $f \in BV(X)$,
\[
 C_1 \mathbf{Var}_* (f) \le \mathbf{Var}(f) \le C_2 \mathbf{Var}_* (f).
\]
\end{lemma}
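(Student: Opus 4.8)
The plan is to route everything through the ``metric'' Besov functional $N_1^{\lambda_1^\#}(f,\cdot)$ of~\eqref{eq:Besov-seminorm-r} (taken with $p=1$, $\alpha=\lambda_1^\#$), using that $1-\frac{\kappa}{d_W}=\alpha_1^\#=\frac{\lambda_1^\#}{d_W}$ under Assumption~\ref{assumpt:wBEatcrit}. First I would record the elementary reduction coming from Ahlfors regularity: since $c_1 r^{d_H}\le\mu(B(x,r))\le c_2 r^{d_H}$, one has for every $r>0$
\[
\tfrac{1}{c_2}N_1^{\lambda_1^\#}(f,r)\le \int_X\int_{B(x,r)}\frac{|f(y)-f(x)|}{r^{\lambda_1^\#}\mu(B(x,r))}\,d\mu(y)\,d\mu(x)\le \tfrac{1}{c_1}N_1^{\lambda_1^\#}(f,r),
\]
so that $\mathbf{Var}(f)\simeq\liminf_{r\to0^+}N_1^{\lambda_1^\#}(f,r)$. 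It therefore suffices to compare $\mathbf{Var}_*(f)$ with $\liminf_{r\to0^+}N_1^{\lambda_1^\#}(f,r)$.

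For $\mathbf{Var}(f)\le C_2\mathbf{Var}_*(f)$ I would reuse the lower-bound computation from the proof of Theorem~\ref{Besov characterization}: the on-diagonal estimate $p_t(x,y)\ge c_5 e^{-c_6}t^{-d_H/d_W}$ on $\{d(x,y)\le t^{1/d_W}\}$ gives the \emph{pointwise in $t$} bound
\[
t^{-\alpha_1^\#}\int_X\int_X p_t(x,y)|f(x)-f(y)|\,d\mu(x)\,d\mu(y)\ge c\,N_1^{\lambda_1^\#}(f,t^{1/d_W}).
\]
Taking $\liminf_{t\to0^+}$ and reparametrising $r=t^{1/d_W}$ yields $\liminf_r N_1^{\lambda_1^\#}(f,r)\le C\mathbf{Var}_*(f)$, hence $\mathbf{Var}(f)\le C_2\mathbf{Var}_*(f)$.

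The substance of the lemma is the reverse inequality $\mathbf{Var}_*(f)\le C\,\liminf_r N_1^{\lambda_1^\#}(f,r)$. Here I would run the $A(t)+B(t)$ splitting of Theorem~\ref{Besov characterization}, but along the \emph{coupled} scale $r=(\Lambda t)^{1/d_W}$ with a parameter $\Lambda>0$ fixed later. Bounding $B(t)$ crudely by $p_t\le c_3 t^{-d_H/d_W}$ keeps the near field at the single base scale,
\[
t^{-\alpha_1^\#}B(t)\le c_3\,\Lambda^{(\lambda_1^\#+d_H)/d_W}\,N_1^{\lambda_1^\#}\!\bigl(f,(\Lambda t)^{1/d_W}\bigr),
\]
while the dyadic annular estimate for $A(t)$ produces a far-field tail
\[
t^{-\alpha_1^\#}A(t)\le c_3\,\Lambda^{(\lambda_1^\#+d_H)/d_W}\,\sigma(\Lambda)\,\sup_{\rho>0}N_1^{\lambda_1^\#}(f,\rho),
\]
with $\sigma(\Lambda)=\sum_{k\ge1}2^{k(\lambda_1^\#+d_H)}\exp\bigl(-c_4(2^{(k-1)d_W}\Lambda)^{1/(d_W-1)}\bigr)$ satisfying $\Lambda^{(\lambda_1^\#+d_H)/d_W}\sigma(\Lambda)\to0$ as $\Lambda\to\infty$ (super-exponential beats polynomial). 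Taking $\liminf_{t\to0^+}$, so that the base scale runs to $0$ and the first term becomes $\liminf_r N_1^{\lambda_1^\#}(f,r)$, gives
\[
\mathbf{Var}_*(f)\le c_3\Lambda^{(\lambda_1^\#+d_H)/d_W}\liminf_{r\to0^+}N_1^{\lambda_1^\#}(f,r)+c_3\Lambda^{(\lambda_1^\#+d_H)/d_W}\sigma(\Lambda)\sup_{\rho>0}N_1^{\lambda_1^\#}(f,\rho).
\]

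The decisive, and hardest, step is to absorb the last term. I would first observe the comparability $\sup_{\rho}N_1^{\lambda_1^\#}(f,\rho)\le C_0\mathbf{Var}_*(f)<\infty$: indeed Lemma~\ref{Lemma 2:BV embed} gives $\sup_t t^{-\alpha_1^\#}\int_X\int_X p_t|f-f|=\|f\|_{1,1-\kappa/d_W}\le C\mathbf{Var}_*(f)$, and combining this with the pointwise lower bound of the second paragraph turns the supremum of the heat functional into $\sup_\rho N_1^{\lambda_1^\#}(f,\rho)\le C_0\mathbf{Var}_*(f)$. Substituting and then choosing $\Lambda=\Lambda_0$ so large that $c_3 C_0\Lambda_0^{(\lambda_1^\#+d_H)/d_W}\sigma(\Lambda_0)\le\tfrac12$ lets me absorb $\tfrac12\mathbf{Var}_*(f)$ into the left-hand side, leaving $\mathbf{Var}_*(f)\le 2c_3\Lambda_0^{(\lambda_1^\#+d_H)/d_W}\liminf_r N_1^{\lambda_1^\#}(f,r)$, which is the claim. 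The main obstacle throughout is precisely this passage from a \emph{supremum} over scales—which the metric/heat comparison unavoidably produces in the near-field dyadic sum (so that the naive argument only controls $\mathbf{Var}_*$ by a $\limsup$ of $N_1^{\lambda_1^\#}$)—to the \emph{liminf}; it is legitimate only because the weak Bakry-\'Emery condition, via the pseudo-Poincar\'e inequality of Proposition~\ref{pseudo poincare} and hence Lemma~\ref{Lemma 2:BV embed}, forces the supremum and the liminf of the heat functional to be comparable, which is exactly what makes the self-improving absorption close.
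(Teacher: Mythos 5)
Your proposal is correct, and while it shares the paper's basic near/far-field decomposition at the coupled scale $r=\delta t^{1/d_W}$, it closes both inequalities by genuinely different mechanisms. For the easy direction $\mathbf{Var}(f)\le C_2\,\mathbf{Var}_*(f)$, you use only the near-diagonal heat kernel lower bound pointwise in $t$ and then take $\liminf$; the paper instead routes this direction through $\mathbf{Var}(f)\le\sup_r\int_X\mathcal M_rf\,d\mu\le C\|f\|_{1,1-\kappa/d_W}$ (Theorem~\ref{Besov characterization}) and Lemma~\ref{Lemma 2:BV embed}, so your argument for this half is more elementary and does not use $wBE(\kappa)$ at all. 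For the hard direction $\mathbf{Var}_*(f)\le C\,\mathbf{Var}(f)$, the paper derives the self-referential inequality $\Psi(t)\le\Phi(t)+A\Psi(ct)$ and closes it by induction along the geometric sequence $t=c^{-n}$ (seeded by $f\in BV$, hence $\limsup_r\int_X\mathcal M_rf\,d\mu<\infty$), whereas you bound the far field once and for all by $\sup_{\rho>0}N_1^{\lambda_1^\#}(f,\rho)$ and absorb it using the a priori bound $\sup_\rho N_1^{\lambda_1^\#}(f,\rho)\le C_0\,\mathbf{Var}_*(f)$, which you correctly assemble from Lemma~\ref{Lemma 2:BV embed} together with the pointwise kernel lower bound; the absorption is legitimate because $\mathbf{Var}_*(f)\le\|f\|_{1,1-\kappa/d_W}<\infty$ for $f\in BV(X)$. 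The net effect is an interesting swap: the paper's proof invokes the $wBE(\kappa)$-based Lemma~\ref{Lemma 2:BV embed} only in the easy direction and keeps the hard direction a pure heat-kernel/metric iteration, while yours localizes the $wBE(\kappa)$ input to the hard direction. Your absorption scheme buys a cleaner handling of the $\liminf$ bookkeeping (the paper's final passage from $\Psi(t)\le\Phi(t)+A\Psi(ct)$ to $A\liminf_t\Psi\le\liminf_t\Phi$ requires some care about which sequences realize which $\liminf$'s, which your fixed-$t$ inequality plus a constant-in-$t$ error term avoids entirely), at the cost of making the hard direction depend on the pseudo-Poincar\'e machinery; the paper's iteration buys the converse. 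Both are valid under the standing Assumption~\ref{assumpt:wBEatcrit}, and your checks of the key quantitative facts (the identity $t^{-\alpha_1^\#-d_H/d_W}(\Lambda t)^{(\lambda_1^\#+d_H)/d_W}=\Lambda^{(\lambda_1^\#+d_H)/d_W}$ making the near-field bound scale-free, and $\Lambda^{(\lambda_1^\#+d_H)/d_W}\sigma(\Lambda)\to0$) are accurate, so the argument is complete.
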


\begin{proof}
The second inequality is obtained using the definition and Theorem~\ref{Besov characterization} to see that 
\[
\mathbf{Var}(f) \le \sup_{r>0} \int_X \mathcal{M}_r f (y) d\mu(y) \le C \| f \|_{1,1-\frac{\kappa}{d_W}}
\]
and applying Lemma~\ref{Lemma 2:BV embed}. To prove the first inequality we decompose as in the proof of Theorem~\ref{Besov characterization}. Fix $ f \in L^1(X,\mu)$, and $\delta,t>0$.  Write
\begin{equation*}
\Psi (t)=\frac{1}{ t^{1-\frac{\kappa}{d_W}}}\int_X\int_X p_{t}(x,y) |f(x)-f(y)| \,d\mu(x)\,d\mu(y)
\end{equation*}
and estimate as follows. Let $r=\delta t^{1/d_W}$. For $d(x,y)<\delta t^{1/d_W}$ the sub-Gaussian upper bound~\eqref{eq:subGauss-upper} implies $p_t(x,y)\leq C t^{-d_H/d_W}$, so that
\begin{align*}
\frac{1}{ t^{1-\frac{\kappa}{d_W}}}\iint_{\Delta_r} p_{t}(x,y) |f(x)-f(y)| \,d\mu(x)\,d\mu(y)
	&\leq \frac{C}{t^{(d_W+d_{H}-\kappa)/d_{W}}}\iint_{\Delta_r} |f(x)-f(y)| \,d\mu(x)\,d\mu(y) \\
	&\leq C\delta^{d_W+d_H-\kappa} \int_X \mathcal{M}_{\delta t^{1/d_W}} f (y) d\mu(y)
	:=\Phi(t).
	\end{align*}
For $d(x,y)>\delta t^{1/d_W}$ we instead use the sub-Gaussian bounds~\eqref{eq:subGauss-upper} to see there are $c,C>1$ and $c'>0$ such that
\begin{equation*}
	p_t(x,y)\leq C\exp\biggl( -c'\Bigl(\frac{d(x,y)^{d_W}}t\Bigr)^{\frac1{d_W-1}}\biggr) p_{ct}(x,y)
	\leq C\exp\bigl(-c' \delta^{\frac{d_W}{d_W-1}}\bigr) p_{ct}(x,y)
	\end{equation*}
and therefore
\begin{align}
	\Psi(t)
	&\leq \Phi(t) + \frac{1}{ t^{1-\frac{\kappa}{d_W}}}\int_X\int_{X\setminus B(y,r)} p_{t}(x,y) |f(x)-f(y)| \,d\mu(x)\,d\mu(y)\notag\\
	&\leq \Phi(t)+ C\exp\bigl(-c' \delta^{\frac{d_W}{d_W-1}}\bigr) \int_X\int_{X\setminus B(y,r)} p_{ct}(x,y) |f(x)-f(y)| \,d\mu(x)\,d\mu(y)\notag\\
	&\le \Phi(t) + A \Psi(ct), \label{eq:ineqPhiandPsi}
	\end{align}
where $A$ is a constant that can be made as small as we desire by making $\delta$ large enough. We choose $\delta$ so that $A<\frac12$. Observe that  with $t=c^{-(n+1)}$ ~\eqref{eq:ineqPhiandPsi} gives
\begin{equation}\label{eq:iteratedboundforPsi}
	\Psi(c^{-(n+1)})\leq \Phi(c^{-(n+1)})+A\Psi(c^{-n}).
	\end{equation}

Now suppose $f\in BV(X)$. Then $\limsup_{r \to 0^+} \int_X \mathcal{M}_r f (y) d\mu(y)<\infty$ and therefore there is $M'$ so $\sup_{n \ge1} \Phi \left(c^{-n}\right) \le M' <+\infty$.  For $M=\max\{M',\Psi(c^{-1})\}$ it is easily checked by induction from~\eqref{eq:iteratedboundforPsi} that $\Psi(c^{-n})\leq \frac{M}{1-A}(1-A^n)$.  This implies
$\mathbf{Var}_*(f)=\liminf_{t\to0^+}\Psi(t)<\infty$. On the other hand, from Lemma \ref{Lemma 2:BV embed} one has 
$\Psi(t)\le C \mathbf{Var}_*(f)$. Hence the estimate in \eqref{eq:ineqPhiandPsi} yields
\[
\Psi(t)\le \Phi(t)+AC \mathbf{Var}_*(f).
\]
Fixing $\delta$ such that $AC<1$ and taking $\liminf_{t\to 0^+}$, one deduces $C_1 \mathbf{Var}_* (f) \le \mathbf{Var}(f)$.

\end{proof}

\subsection{Co-area formula}\label{S:Co-area formula}

From computations of the previous section, we may immediately deduce several other properties of $\mathbf{Var}$.

\begin{lemma} If $f\in L^1(X,\mu)$ has $\mathbf{Var} (f)=0$ then $f=0$. Moreover there is $C>0$ so for $f,g \in BV(X)$
\begin{gather*}
 \mathbf{Var}(f+g) \le C ( \mathbf{Var}(f)+\mathbf{Var}(g)),\\
 \mathbf{Var}(fg) \le C (\|f \|_{L^\infty(X,\mu)} \mathbf{Var}(g)+\|g \|_{L^\infty(X,\mu)} \mathbf{Var}(f)).
 \end{gather*}
\end{lemma}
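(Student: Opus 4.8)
The plan is to reduce all three assertions to the heat--semigroup Besov seminorm $\|\cdot\|_{1,1-\frac{\kappa}{d_W}}$, using Theorem~\ref{bounded embedding}, which asserts that $c\,\mathbf{Var}(h)\le\|h\|_{1,1-\frac{\kappa}{d_W}}\le C\,\mathbf{Var}(h)$ for every $h\in BV(X)$ (recall $\alpha_1^\#=1-\kappa/d_W$ by Assumption~\ref{assumpt:wBEatcrit} and $BV(X)=\mathbf{B}^{1,\alpha_1^\#}(X)$ by Proposition~\ref{KS-BEsov}). The point is that $\mathbf{Var}$, being defined through a $\liminf$, is awkward to manipulate algebraically, whereas for each fixed $t>0$ the map $h\mapsto\iint p_t(x,y)|h(x)-h(y)|\,d\mu(x)\,d\mu(y)$ is a genuine seminorm, and hence so is its normalized supremum over $t$. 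Once the algebraic estimates are proved for $\|\cdot\|_{1,1-\frac{\kappa}{d_W}}$, Theorem~\ref{bounded embedding} transfers them to $\mathbf{Var}$, the only cost being an absolute constant coming from the defect in the two-sided comparison.

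For the subadditivity and Leibniz bounds I would work purely at the level of difference quotients. From $|(f+g)(x)-(f+g)(y)|\le|f(x)-f(y)|+|g(x)-g(y)|$, integrating against $p_t\,\mu\otimes\mu$, dividing by $t^{1-\kappa/d_W}$ and taking $\sup_{t>0}$ gives $\|f+g\|_{1,1-\frac{\kappa}{d_W}}\le\|f\|_{1,1-\frac{\kappa}{d_W}}+\|g\|_{1,1-\frac{\kappa}{d_W}}$, which in particular shows $f+g\in\mathbf{B}^{1,\alpha_1^\#}(X)=BV(X)$, and then Theorem~\ref{bounded embedding} yields $\mathbf{Var}(f+g)\le C(\mathbf{Var}(f)+\mathbf{Var}(g))$. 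For the product I would start from the a.e.\ pointwise bound
\[
|f(x)g(x)-f(y)g(y)|\le \|f\|_{L^\infty(X,\mu)}\,|g(x)-g(y)|+\|g\|_{L^\infty(X,\mu)}\,|f(x)-f(y)|,
\]
valid since $f,g\in L^\infty(X,\mu)$ (implicit in the statement, and guaranteeing $fg\in L^1(X,\mu)$ via $|fg|\le\|f\|_{L^\infty(X,\mu)}|g|$). Running the identical integrate--divide--sup argument produces $\|fg\|_{1,1-\frac{\kappa}{d_W}}\le\|f\|_{L^\infty(X,\mu)}\|g\|_{1,1-\frac{\kappa}{d_W}}+\|g\|_{L^\infty(X,\mu)}\|f\|_{1,1-\frac{\kappa}{d_W}}$, whence $fg\in BV(X)$ and the desired inequality follow from Theorem~\ref{bounded embedding}.

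For the first assertion the plan is to deduce from $\mathbf{Var}(f)=0$ that $\|f\|_{1,1-\frac{\kappa}{d_W}}=0$, i.e.\ that $\iint p_t(x,y)|f(x)-f(y)|\,d\mu(x)\,d\mu(y)=0$ for every $t>0$. Since the sub-Gaussian lower bound in~\eqref{eq:subGauss-upper} forces $p_t(x,y)>0$ for $\mu\times\mu$-a.e.\ $(x,y)$, this gives $f(x)=f(y)$ for $\mu\times\mu$-a.e.\ $(x,y)$, so $f$ is $\mu$-a.e.\ equal to a constant. Finally, Ahlfors $d_H$-regularity imposed for all radii (Assumption~\ref{A1}) forces $\mu(X)=\lim_{r\to\infty}\mu(B(x,r))=\infty$, and the only constant in $L^1(X,\mu)$ on a space of infinite measure is $0$; hence $f=0$.

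The main obstacle is the very first step of the previous paragraph: passing from $\mathbf{Var}(f)=0$, a $\liminf$ condition, to the vanishing of the \emph{supremum}-based seminorm $\|f\|_{1,1-\frac{\kappa}{d_W}}$. The delicate feature is that $\mathbf{Var}(f)=0$ does not self-evidently place $f$ in $BV(X)=KS^{\lambda_1^\#,1}(X)$, whose defining requirement is a finite $\limsup$. The bridge is the self-improvement engineered in the proof of Theorem~\ref{bounded embedding}, namely the iteration $\Psi(t)\le\Phi(t)+A\Psi(ct)$ with $A<\frac12$ from Lemma~\ref{comparison var}: summing this geometric iteration and exploiting the a priori decay $\Psi(t)\le 2\|f\|_{L^1(X,\mu)}\,t^{-\lambda_1^\#/d_W}$ at large $t$ (a consequence of conservativeness $P_t1=1$) to kill the tail, one controls $\|f\|_{1,1-\frac{\kappa}{d_W}}$ by $\mathbf{Var}(f)$. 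Checking that this iteration runs, and that the liminf-to-sup improvement is legitimate without presupposing $f\in BV(X)$, is the one point of the argument that requires genuine care; everything else is the routine seminorm calculus described above.
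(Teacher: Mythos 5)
Your handling of the two algebraic inequalities and of the vanishing statement coincides with the paper's proof: the paper likewise reduces everything to the seminorm $\|\cdot\|_{1,1-\frac{\kappa}{d_W}}$ via Theorem~\ref{bounded embedding}, uses the pointwise bounds $|h(x)-h(y)|\leq|f(x)-f(y)|+|g(x)-g(y)|$ for $h=f+g$ and $|h(x)-h(y)|\leq|f(x)||g(x)-g(y)|+|g(y)||f(x)-f(y)|$ for $h=fg$, and your route to constancy (strict positivity of $p_t$ from the lower bound in~\eqref{eq:subGauss-upper}) plus $\mu(X)=\infty$ from Ahlfors regularity at all radii merely spells out what the paper leaves implicit in ``thus $f$ is constant.''

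The one point where you go beyond the paper is also the one step that does not run as you sketch it. Summing the iteration $\Psi(t)\leq\Phi(t)+A\Psi(ct)$ and killing the tail with the a priori decay $\Psi(s)\leq 2\|f\|_{L^1(X,\mu)}\,s^{-(1-\kappa/d_W)}$ (which is correct, and valid for every $f\in L^1(X,\mu)$) yields $\Psi(t)\leq\sum_{j\geq0}A^j\Phi(c^jt)$; but taking $\sup_{t>0}$ of this controls $\|f\|_{1,1-\frac{\kappa}{d_W}}$ by $\sup_{r>0}\int_X\mathcal{M}_rf\,d\mu$ --- the easy direction already contained in Theorem~\ref{Besov characterization} --- and \emph{not} by the $\liminf$ quantity $\mathbf{Var}(f)$. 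To extract a bound by $\liminf_{t\to0^+}\Phi(t)$, the proof of Lemma~\ref{comparison var} first establishes $\liminf_{t\to0^+}\Psi(t)<\infty$ from $\sup_{n\geq1}\Phi(c^{-n})<\infty$, i.e.\ from $\limsup_{r\to0^+}\int_X\mathcal{M}_rf\,d\mu<\infty$, which is precisely membership in $BV(X)$. Since the iteration passes from scale $t$ to the \emph{larger} scale $ct$, a sequence $t_k\to0^+$ along which $\Phi(t_k)\to0$ gives no control on $\Psi(ct_k)$, so the liminf hypothesis alone does not propagate through the recursion; your claimed ``liminf-to-sup improvement without presupposing $f\in BV(X)$'' is therefore not delivered by the argument you describe. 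Note that the paper's own proof has the same tacit restriction --- it simply applies Theorem~\ref{bounded embedding}, whose hypothesis is $f\in BV(X)$ --- and the intended reading, consistent with Definition~\ref{def-BV} where $\mathbf{Var}$ is defined on $BV(X)$, is that the hypothesis $\mathbf{Var}(f)=0$ carries $f\in BV(X)$ with it; under that reading your first paragraph's argument (and the paper's) is complete, and the speculative repair in your final paragraph is unnecessary.
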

\begin{proof}
Let $f\in L^1(X,\mu)$ such that $\mathbf{Var} (f)=0$. By Theorem \ref{bounded embedding}, one has $\| f \|_{1,1-\kappa/d_W}=0$ and thus $f$ is constant. We conclude the first point by recalling that $f \in L^1(X,\mu)$. In light of Theorem \ref{bounded embedding} it suffices to establish the stated inequalities for $\| \cdot \|_{1,1-\frac{\kappa}{d_W}}$, but we recall
\begin{equation*}
 \| h\|_{1,1-\frac{\kappa}{d_W}}=\sup_{t >0}\frac1{ t^{1-\frac{\kappa}{d_W}}}\int_X\int_X p_{t}(x,y) |h(x)-h(y)| \,d\mu(x)\,d\mu(y)
 \end{equation*}
and when $h=f+g$ write $|h(x)-h(y)|\leq|f(x)-f(y)|+|g(x)-g(y)|$ while for $h=fg$ write
\begin{equation*}
 |h(x)-h(y)|\leq |f(x)||g(x)-g(y)|+|g(y)||f(x)-f(y)|
 \end{equation*}
from which the result is immediate.
\end{proof}

The next result plays the role of a co-area formula. 

\begin{theorem}[Co-area Formula]\label{coarea formula 1a}
For $f \in L^1(X,\mu)$, $f \ge 0$ a.e., let $E_t(f)=\{x\in X\, :\, f(x)>t\}.$ There are constants $C_1,C_2>0$ such that 
\[
 C_1\int_0^\infty \mathbf{Var} ( \mathbf{1}_{E_t(f)}) dt \le \mathbf{Var} (f) \le C_2 \int_0^\infty \mathbf{Var} ( \mathbf{1}_{E_t(f)}) dt .
\]
In particular, if $f \in BV(X)$ then for almost every $t$, $\mathbf{1}_{E_t(f)} \in BV(X)$. Conversely, if for almost every $t \ge 0$, $\mathbf{1}_{E_t(f)} \in BV(X)$ and $\int_0^\infty \mathbf{Var} ( \mathbf{1}_{E_t(f)}) dt < \infty$, then $f \in BV(X)$.
\end{theorem}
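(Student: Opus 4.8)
The plan is to reduce both inequalities to a single exact ``layer-cake'' identity at the level of the heat-kernel double integrals, and then read off the two directions using, respectively, the supremum-based Besov seminorm and a Fatou argument. Throughout write $\gamma=1-\frac{\kappa}{d_W}$ and, for $g\in L^1(X,\mu)$ and $s>0$,
\[
I_g(s)=\int_X\int_X p_s(x,y)\,|g(x)-g(y)|\,d\mu(x)\,d\mu(y),
\]
so that $\|g\|_{1,\gamma}=\sup_{s>0}s^{-\gamma}I_g(s)$ and $\mathbf{Var}_*(g)=\liminf_{s\to0^+}s^{-\gamma}I_g(s)$. First I would isolate from the computation in the proof of Lemma~\ref{lem:Fubini for BV estimate} (taking $g\equiv1$ and keeping the equality rather than passing to the $\|P_s(\cdot)-\cdot\|_{L^1}$ bound) the exact identity
\[
I_f(s)=\int_0^\infty I_{\mathbf{1}_{E_t(f)}}(s)\,dt,\qquad s>0,
\]
which is just the layer-cake decomposition $|f(x)-f(y)|=\int_0^\infty|\mathbf{1}_{E_t}(x)-\mathbf{1}_{E_t}(y)|\,dt$ integrated against $p_s$.

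For the upper bound I would divide this identity by $s^{\gamma}$ and estimate each summand pointwise in $t$ by $s^{-\gamma}I_{\mathbf{1}_{E_t}}(s)\le\|\mathbf{1}_{E_t}\|_{1,\gamma}$, so that $s^{-\gamma}I_f(s)\le\int_0^\infty\|\mathbf{1}_{E_t}\|_{1,\gamma}\,dt$ for every $s$; taking the supremum over $s$ on the left is harmless since the bound is independent of $s$ (i.e.\ $\sup_s\int_0^\infty\le\int_0^\infty\sup_s$), giving $\|f\|_{1,\gamma}\le\int_0^\infty\|\mathbf{1}_{E_t}\|_{1,\gamma}\,dt$. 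Converting the endpoints with Theorem~\ref{bounded embedding} (so $\mathbf{Var}(f)\le C\|f\|_{1,\gamma}$ and $\|\mathbf{1}_{E_t}\|_{1,\gamma}\le C\,\mathbf{Var}(\mathbf{1}_{E_t})$ whenever $\mathbf{1}_{E_t}\in BV(X)$) yields $\mathbf{Var}(f)\le C_2\int_0^\infty\mathbf{Var}(\mathbf{1}_{E_t})\,dt$. The point to stress is that this direction must be run through the supremum seminorm $\|\cdot\|_{1,\gamma}$ rather than through $\mathbf{Var}_*$ directly: the naive attempt would require a reverse of Lemma~\ref{lem:intVar*byVar*}, i.e.\ a reverse-Fatou inequality $\liminf_s\int_0^\infty(\cdots)\,dt\le\int_0^\infty\liminf_s(\cdots)\,dt$, which is false in general. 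This is the main obstacle, and replacing the $\liminf$ by the $\sup$, for which the interchange goes the favorable way, is exactly what resolves it.

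For the lower bound I would instead divide the identity by $s^{\gamma}$, take $\liminf_{s\to0^+}$, and push the $\liminf$ inside the $t$-integral by Fatou; this is precisely Lemma~\ref{lem:intVar*byVar*}, giving $\int_0^\infty\mathbf{Var}_*(\mathbf{1}_{E_t})\,dt\le\mathbf{Var}_*(f)$. I would then convert using the equivalences already at hand: the universal bound $\mathbf{Var}(g)\le C\,\mathbf{Var}_*(g)$ (combining $\mathbf{Var}(g)\le C\|g\|_{1,\gamma}$, which is immediate from Theorem~\ref{Besov characterization}, with Lemma~\ref{Lemma 2:BV embed}) applied to $g=\mathbf{1}_{E_t}$, together with $\mathbf{Var}_*(f)\le C\,\mathbf{Var}(f)$ for $f\in BV(X)$ from Lemma~\ref{comparison var}, gives $C_1\int_0^\infty\mathbf{Var}(\mathbf{1}_{E_t})\,dt\le\mathbf{Var}(f)$.

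Finally, for the two consequences I would be careful about the $\liminf$-versus-$\limsup$ distinction between the functional $\mathbf{Var}$ and membership in $BV(X)=\mathbf{B}^{1,\gamma}(X)$, handling it by routing through $\mathbf{Var}_*$ and $\|\cdot\|_{1,\gamma}$, which are comparable for \emph{every} $f\in L^1(X,\mu)$ (one direction is trivial, the other is Lemma~\ref{Lemma 2:BV embed}). Concretely, if $f\in BV(X)$ then the chain $\int_0^\infty\mathbf{Var}_*(\mathbf{1}_{E_t})\,dt\le\mathbf{Var}_*(f)\le C\,\mathbf{Var}(f)<\infty$ forces $\mathbf{Var}_*(\mathbf{1}_{E_t})<\infty$, hence $\|\mathbf{1}_{E_t}\|_{1,\gamma}<\infty$ and $\mathbf{1}_{E_t}\in BV(X)$, for a.e.\ $t$; conversely, if $\mathbf{1}_{E_t}\in BV(X)$ for a.e.\ $t$ and $\int_0^\infty\mathbf{Var}(\mathbf{1}_{E_t})\,dt<\infty$, then the upper-bound chain yields $\|f\|_{1,\gamma}<\infty$, i.e.\ $f\in BV(X)$.
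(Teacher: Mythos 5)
Your proposal is correct and takes essentially the same route as the paper: the upper bound comes from the Fubini/layer-cake computation underlying Lemma~\ref{lem:Fubini for BV estimate} with $g\equiv1$ combined with Lemma~\ref{Lemma 2:BV embed}, and the lower bound from Lemmas~\ref{lem:intVar*byVar*} and~\ref{comparison var}, exactly as in the paper's proof. Your only deviations---keeping the layer-cake decomposition as an exact identity (rather than the factor-$2$ bound through $\|P_s\mathbf{1}_{E_t}-\mathbf{1}_{E_t}\|_{L^1(X,\mu)}$) and taking the supremum in $s$ to land on the seminorm $\|f\|_{1,1-\kappa/d_W}$ instead of taking $\liminf_{s\to0^+}$ to land on $\mathbf{Var}_*(f)$---are cosmetic, since Theorem~\ref{bounded embedding}, Lemma~\ref{Lemma 2:BV embed} and Lemma~\ref{comparison var} make these quantities comparable.
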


\begin{proof}
The first bound $ C_1\int_0^\infty \mathbf{Var} ( \mathbf{1}_{E_t(f)}) dt \le \mathbf{Var} (f) $ follows from Lemma~\ref{comparison var} and Lemma~\ref{lem:intVar*byVar*}.
For the second bound, we use Lemma~\ref{lem:Fubini for BV estimate} with $g\equiv1$ and the definition of the $\| \cdot \|_{1,1-\kappa/d_W}$ norm to obtain
\begin{equation*}
 \int\limits_X\int\limits_X p_s(x,y)|f(x)-f(y)|\,d\mu(x)d\mu(y)
 \leq 2 \int\limits_0^\infty \int\limits_X |(P_s-I)\mathbf{1}_{E_t(f)} |\,d\mu \,dt
 \leq 2 s^{1-\kappa/d_W} \int\limits_0^\infty \|\mathbf{1}_{E_t(f)}\|_{1,1-\kappa/d_W}\,dt.
 \end{equation*}
Dividing by $s^{1-\kappa/d_W}$ and taking $\liminf_{s\to0+}$ bound $\mathbf{Var}_*(f)$. The result then follows by dominating the integrand on the right using Lemma~\ref{Lemma 2:BV embed}.
\end{proof}

\subsection{Sets of finite perimeter}
Using the BV seminorm we can use the standard approach to define the perimeter of measurable sets.

\begin{definition}
Let $E \subset X$ be a Borel set. We say that $E$ has a finite perimeter if $\mathbf{1}_E \in BV(X)$. In that case, the perimeter of $E$ is defined as $P(E)=\mathbf{Var} (\mathbf{1}_E)$.
\end{definition}

The locality of the BV seminorm permits an improvement of the crude bound used in Lemma~\ref{lem:charfunctinBesov} to determine that a set had finite perimeter.  In essence, we can replace the upper Minkowski content used there with a corresponding $(d_W-\kappa)$-codimensional  \textit{lower} Minskowski content defined as follows:
\[
\mathcal{C}^*_{ d_W-\kappa} (E) =\liminf_{r \to 0^+} \frac{1}{r^{d_W-\kappa}} \mu (\partial^*_r E),
\]
with $\partial^*_r E$ as in~\eqref{eq:measure r neighborhood}.

\begin{theorem}\label{Minkowski} Under Assumption~\ref{assumpt:wBEatcrit} we have
\[
P(E) \le C \mathcal{C}^*_{d_W-\kappa} (E).
\]
In particular, any set for which $\cap_r \partial^*_r E$ has finite $(d_W-\kappa)$-codimensional lower Minskowski content has finite perimeter.
\end{theorem}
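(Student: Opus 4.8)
The plan is to estimate the Korevaar--Schoen integrand of $\mathbf{1}_E$ at each scale $r$ by the measure of the measure-theoretic $r$-neighborhood $\partial^*_rE$, exactly as in the computation of Lemma~\ref{lem:charfunctinBesov}, and then to pass to the \emph{lower} limit rather than the upper one. The direct inequality $P(E)\le C\,\mathcal{C}^*_{d_W-\kappa}(E)$ is then essentially immediate; the subtle point, where Assumption~\ref{assumpt:wBEatcrit} is used, is to turn this $\liminf$ bound into genuine membership $\mathbf{1}_E\in BV(X)$, i.e.\ finiteness of the perimeter.

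First I would record the pointwise-in-$r$ estimate. Since $d_H+d_W-\kappa=(d_W-\kappa)+d_H$, one has $\int_X\mathcal{M}_r\mathbf{1}_E\,d\mu=N_1^{d_W-\kappa}(\mathbf{1}_E,r)$, and repeating the computation of Lemma~\ref{lem:charfunctinBesov} gives, for every $r>0$,
\begin{align*}
\int_X\mathcal{M}_r\mathbf{1}_E\,d\mu
&= \frac1{r^{d_H+d_W-\kappa}}\Bigl(\int_{E^*\cap(E^c)_r}\mu\bigl(B(y,r)\cap E^c\bigr)\,d\mu(y)+\int_{(E^c)^*\cap E_r}\mu\bigl(B(y,r)\cap E\bigr)\,d\mu(y)\Bigr)\\
&\le \frac{C}{r^{d_W-\kappa}}\mu(\partial^*_rE),
\end{align*}
where Ahlfors regularity is used both to pass from $E,E^c$ to their density points $E^*,(E^c)^*$ and to bound $\mu(B(y,r))\le Cr^{d_H}$, and $\partial^*_rE=(E^*\cap(E^c)_r)\cup((E^c)^*\cap E_r)$.

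Next I would take $\liminf_{r\to0^+}$ of both sides. Because the estimate holds for every $r$ and $\mathbf{Var}(\mathbf{1}_E)=\liminf_{r\to0^+}\int_X\mathcal{M}_r\mathbf{1}_E\,d\mu$ by~\eqref{eq:VarintermsofMf}, this yields at once
\[
\mathbf{Var}(\mathbf{1}_E)\le C\liminf_{r\to0^+}\frac1{r^{d_W-\kappa}}\mu(\partial^*_rE)=C\,\mathcal{C}^*_{d_W-\kappa}(E).
\]
This is exactly why the lower Minkowski content is the right object here: one only needs the boundary neighborhood to be small along some sequence $r_n\to0$, whereas the cruder argument of Lemma~\ref{lem:charfunctinBesov} used the $\limsup$ and hence the upper content.

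The main obstacle is to show that $\mathbf{1}_E\in BV(X)=\mathbf{B}^{1,1-\kappa/d_W}(X)$, since membership is governed by $\sup_sN_1^{d_W-\kappa}(\mathbf{1}_E,s)$ (equivalently $\limsup$), and a priori finiteness of the lower content does not control the upper content. To close this gap I would route the argument through $\mathbf{Var}_*$ and the locality machinery: Lemma~\ref{Lemma 2:BV embed} gives $\|\mathbf{1}_E\|_{1,1-\kappa/d_W}\le C\,\mathbf{Var}_*(\mathbf{1}_E)$ for any $L^1$ function, so it suffices to bound $\mathbf{Var}_*(\mathbf{1}_E)$ by the same boundary quantity; using Lemma~\ref{lem:Fubini for BV estimate} with $g\equiv1$ and the dyadic/annular splitting of the heat kernel as in Theorem~\ref{Besov characterization} and Lemma~\ref{comparison var}, the near part ($d(x,y)\lesssim t^{1/d_W}$) reproduces $N_1^{d_W-\kappa}(\mathbf{1}_E,t^{1/d_W})\le Cr^{-(d_W-\kappa)}\mu(\partial^*_rE)$, and the tails are absorbed via the weak Bakry--Émery locality of Theorem~\ref{bounded embedding}, which makes $\mathbf{Var}$, $\mathbf{Var}_*$ and $\|\cdot\|_{1,1-\kappa/d_W}$ mutually comparable. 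Once $\mathbf{1}_E\in BV(X)$ is known, $P(E)=\mathbf{Var}(\mathbf{1}_E)\le C\,\mathcal{C}^*_{d_W-\kappa}(E)$ is the estimate already obtained, and the final assertion follows by applying it to $\cap_r\partial^*_rE$. The step I expect to require the most care is precisely the interchange of $\liminf_{t\to0}$ with the sub-Gaussian tail sum over the larger spatial scales $2^{j}t^{1/d_W}$: the super-exponential decay of the heat kernel must be balanced against the merely $\liminf$-controlled (and possibly oscillating) growth of $r\mapsto\mu(\partial^*_rE)$, and it is here that the locality estimate furnished by $wBE(\kappa)$ at the critical exponent is indispensable.
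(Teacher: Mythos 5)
Your first half is exactly the paper's proof: the authors quote the inequality
$\int_X\int_{B(x,r)}|\mathbf{1}_E(x)-\mathbf{1}_E(y)|\,d\mu(y)\,d\mu(x)\leq Cr^{d_H}\mu(\partial^*_rE)$
established in Lemma~\ref{lem:charfunctinBesov}, divide by $r^{d_H+d_W-\kappa}$, take $\liminf_{r\to0^+}$ using~\eqref{eq:VarintermsofMf}, and stop there. So the displayed estimate $\mathbf{Var}(\mathbf{1}_E)\le C\,\mathcal{C}^*_{d_W-\kappa}(E)$ is obtained in the paper precisely as in your second and third paragraphs, with no further machinery.

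Your additional argument for membership $\mathbf{1}_E\in BV(X)$ is not in the paper, and as sketched it does not close. The absorption of the heat-kernel tails ``via the weak Bakry--\'Emery locality of Theorem~\ref{bounded embedding}'' is circular: Theorem~\ref{bounded embedding} and Lemma~\ref{comparison var} are stated and proved \emph{for $f\in BV(X)$} --- the proof of Lemma~\ref{comparison var} explicitly uses $\limsup_{r\to0^+}\int_X\mathcal{M}_rf\,d\mu<\infty$ to get $\sup_{n\ge1}\Phi(c^{-n})\le M'$ and start the induction on~\eqref{eq:iteratedboundforPsi} --- so you cannot invoke them for $\mathbf{1}_E$ before knowing $\mathbf{1}_E\in BV(X)$, which is what you are trying to prove. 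The alternative route you mention, through Lemma~\ref{Lemma 2:BV embed} (which does hold for all $f\in L^1(X,\mu)$), would require $\mathbf{Var}_*(\mathbf{1}_E)\le C\,\mathcal{C}^*_{d_W-\kappa}(E)$; but the iteration $\Psi(t)\le\Phi(t)+A\Psi(ct)$ of~\eqref{eq:ineqPhiandPsi} only terminates when $\Phi$ is bounded along a full geometric sequence of scales, i.e.\ under $\limsup$-type control. Finite \emph{lower} Minkowski content bounds $\Phi$ only along some sequence $r_n\to0^+$, and although $r\mapsto\mu(\partial^*_rE)$ is monotone, the normalized quantity $r^{-(d_W-\kappa)}\mu(\partial^*_rE)$ can oscillate between different power envelopes, so the tail sum is not controlled. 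In short: your proof of the stated inequality matches the paper; the extra step upgrading $\liminf$-finiteness of $\mathbf{Var}(\mathbf{1}_E)$ to $\limsup$-finiteness (hence to membership in $KS^{\lambda_1^\#,1}(X)$) is a genuine subtlety that the paper's one-line proof does not address either, and your proposed patch, as written, is circular at exactly the point you flagged as delicate.
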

\begin{proof}
The following inequality was shown in the proof of Lemma~\ref{lem:charfunctinBesov}
\begin{equation*}
\iint_{\Delta_r}|\mathbf{1}_E(x)-\mathbf{1}_E(y)|\, d\mu(y)\, d\mu(x)
\leq Cr^{d_H}\mu(\partial^*_rE).
\end{equation*}
Dividing both sides of the inequality by $r^{d_H+d_W-\kappa}$ and taking $\liminf$ as $r\to 0^+$ yields the result.
\end{proof}




\subsection{Sobolev inequality}

It is well known that in the Euclidean space $\mathbb{R}^n$, there is a continuous embedding of BV into $L^{1^*}(\mathbb{R}^n)$, where $\frac{1}{1^*}=1-\frac{1}{n}$ is the critical $L^1$- Sobolev exponent. In our setting, using the results of our previous paper~\cite{ABCRST1}, one can prove a continuous embedding of $BV(X)$ into $L^{1^*}(X,\mu)$ where the critical Sobolev exponent $1^*$ is given by the formula
\[
\frac{1}{1^*}=1-\frac{d_W-\kappa}{d_H}.
\]
The relevant theorem is the following.

\begin{theorem}\label{Sobolev global}
Assume $d_W-\kappa < d_H$. Then $BV(X) \subset L^{1^*}(X,\mu)$ and there is $C>0$ such that for every $f \in BV(X)$,
\[
\| f \|_{L^{1^*}(X,\mu)} \le C \mathbf{Var}(f).
\]
In particular, there exists a constant $C>0$ such that for every set $E$ of finite perimeter,
\[
\mu(E)^{\frac{d_H-d_W+\kappa}{d_H}} \le C P(E).
\]
\end{theorem}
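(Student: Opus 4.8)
The plan is to prove the ``In particular'' isoperimetric inequality first, directly from the interplay between ultracontractivity and the pseudo-Poincar\'e inequality, and then to bootstrap it to the full $L^{1^*}$ embedding by a layer-cake argument together with the co-area estimate. Since $\|f\|_{L^{1^*}(X,\mu)}=\||f|\|_{L^{1^*}(X,\mu)}$ and the pointwise bound $||f(x)|-|f(y)||\le|f(x)-f(y)|$ gives $\||f|\|_{1,1-\kappa/d_W}\le\|f\|_{1,1-\kappa/d_W}$, hence $\mathbf{Var}(|f|)\le C\mathbf{Var}(f)$ by Theorem~\ref{bounded embedding}, I may assume $f\ge0$ throughout. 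I write $\beta=1-\frac{\kappa}{d_W}=\frac{d_W-\kappa}{d_W}$, so that $\frac1{1^*}=1-\frac{\beta d_W}{d_H}$, and recall from the upper bound in~\eqref{eq:subGauss-upper} the ultracontractive estimate $\|P_t\|_{L^1(X,\mu)\to L^\infty(X,\mu)}\le C t^{-d_H/d_W}$.

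The crucial step is to show that for every Borel set $E$ with $\mu(E)<\infty$ one has $\mu(E)^{1/1^*}\le C\,\mathbf{Var}_*(\mathbf{1}_E)$. I would estimate $\int_E(1-P_t\mathbf{1}_E)\,d\mu$ in two ways. On one hand, since $0\le P_t\mathbf{1}_E\le1$ and $\mathbf{1}_E=1$ on $E$, this integral equals $\int_E|\mathbf{1}_E-P_t\mathbf{1}_E|\,d\mu\le\|P_t\mathbf{1}_E-\mathbf{1}_E\|_{L^1(X,\mu)}$, which by Proposition~\ref{pseudo poincare} with $p=1$ is at most $C t^{\beta}\mathbf{Var}_*(\mathbf{1}_E)$. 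On the other hand, using self-adjointness and the semigroup property followed by the interpolation $\|P_{t/2}\mathbf{1}_E\|_{L^2}^2\le\|P_{t/2}\mathbf{1}_E\|_{L^\infty}\|P_{t/2}\mathbf{1}_E\|_{L^1}$ together with ultracontractivity and $L^1$-contractivity, one obtains $\int_E P_t\mathbf{1}_E\,d\mu=\|P_{t/2}\mathbf{1}_E\|_{L^2}^2\le C t^{-d_H/d_W}\mu(E)^2$, and hence $\int_E(1-P_t\mathbf{1}_E)\,d\mu\ge\mu(E)-C t^{-d_H/d_W}\mu(E)^2$. Choosing $t=(2C\mu(E))^{d_W/d_H}$ makes the subtracted term at most $\tfrac12\mu(E)$; substituting this value of $t$ into the first estimate and simplifying the exponent $\beta d_W/d_H=(d_W-\kappa)/d_H$ yields $\mu(E)^{1-(d_W-\kappa)/d_H}=\mu(E)^{1/1^*}\le C\mathbf{Var}_*(\mathbf{1}_E)$. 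Combined with Lemma~\ref{comparison var} and the definition $P(E)=\mathbf{Var}(\mathbf{1}_E)$, this is exactly the stated isoperimetric inequality $\mu(E)^{1/1^*}\le C P(E)$.

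To then obtain the Sobolev inequality, I would use the layer-cake representation $f=\int_0^\infty\mathbf{1}_{E_t(f)}\,dt$ for $f\ge0$, with $E_t(f)=\{x:f(x)>t\}$. Since $1^*\in(1,\infty)$ under the hypothesis $d_W-\kappa<d_H$, Minkowski's integral inequality gives
\[
\|f\|_{L^{1^*}(X,\mu)}\le\int_0^\infty\|\mathbf{1}_{E_t(f)}\|_{L^{1^*}(X,\mu)}\,dt=\int_0^\infty\mu(E_t(f))^{1/1^*}\,dt.
\]
Applying the isoperimetric estimate inside the integral and then Lemma~\ref{lem:intVar*byVar*} followed by Lemma~\ref{comparison var} bounds the right-hand side by $C\int_0^\infty\mathbf{Var}_*(\mathbf{1}_{E_t(f)})\,dt\le C\mathbf{Var}_*(f)\le C'\mathbf{Var}(f)$, which is the desired embedding; working with $\mathbf{Var}_*$ rather than $\mathbf{Var}$ here conveniently sidesteps the almost-every-$t$ issue one would meet if using the co-area formula of Theorem~\ref{coarea formula 1a} verbatim.

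I expect the main obstacle to be the isoperimetric step, specifically achieving the correct two-sided balance between the $t^{\beta}$ gain coming from the pseudo-Poincar\'e inequality and the $t^{-d_H/d_W}$ loss coming from ultracontractivity, and verifying that optimizing over $t$ reproduces precisely the exponent $1/1^*=1-\frac{d_W-\kappa}{d_H}$; once that estimate is in hand, the layer-cake plus Minkowski plus $\mathbf{Var}_*$ bookkeeping is routine.
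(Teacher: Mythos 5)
Your proposal is correct, but it takes a genuinely different route from the paper's. The paper's proof is essentially a citation: Theorem~\ref{bounded embedding} shows that every $f\in\B^{1,1-\kappa/d_W}(X)$ satisfies the pseudo-Poincar\'e-type condition $(P_{1,1-\kappa/d_W})$ of Definition~6.7 in~\cite{ABCRST1}, and then Theorem~6.9 of~\cite{ABCRST1}, applied with $p=1$, $\alpha=1-\frac{\kappa}{d_W}$ and $\beta=\frac{d_H}{d_W}$, yields $\| f \|_{L^{1^*}(X,\mu)} \le C\|f\|_{1,1-\frac{\kappa}{d_W}}\le C\,\mathbf{Var}(f)$ directly; the isoperimetric inequality is then obtained by specializing the embedding to $f=\mathbf{1}_E$. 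You reverse this logical order: you first prove the weak (isoperimetric) form by the De~Giorgi--Ledoux semigroup balance, playing the pseudo-Poincar\'e bound $\|P_t\mathbf{1}_E-\mathbf{1}_E\|_{L^1(X,\mu)}\le Ct^{1-\kappa/d_W}\mathbf{Var}_*(\mathbf{1}_E)$ from Proposition~\ref{pseudo poincare} against the ultracontractive bound $\int_E P_t\mathbf{1}_E\,d\mu=\|P_{t/2}\mathbf{1}_E\|_{L^2}^2\le Ct^{-d_H/d_W}\mu(E)^2$ and optimizing $t\simeq\mu(E)^{d_W/d_H}$, and then upgrade weak to strong type via layer cake and Minkowski's integral inequality (legitimate since $1^*>1$ precisely under the hypothesis $d_W-\kappa<d_H$), using Lemma~\ref{lem:intVar*byVar*} to control $\int_0^\infty\mathbf{Var}_*(\mathbf{1}_{E_t(f)})\,dt\le\mathbf{Var}_*(f)$ and Lemma~\ref{comparison var} to return to $\mathbf{Var}$. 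All your ingredients are available in the paper: conservativeness gives $0\le P_t\mathbf{1}_E\le1$ so your first estimate is an honest $L^1$ bound; $f\in L^1$, $f\ge0$ gives $\mu(E_t(f))<\infty$ for every $t>0$ so the pseudo-Poincar\'e inequality applies to each level set; the exponent bookkeeping $\beta d_W/d_H=(d_W-\kappa)/d_H$ checks out; and the reduction to $f\ge0$ is sound since $\mathbf{Var}(|f|)\le C\,\mathbf{Var}(f)$ by the argument you give. What each approach buys: the paper's route is shorter and inherits the full $L^p$ scale of embeddings from the cited framework, but is a black box; yours is self-contained within the present paper's toolkit, makes explicit the competition between the $t^{1-\kappa/d_W}$ gain and the $t^{-d_H/d_W}$ loss that determines the exponent $1^*$, and in effect reproves the $p=1$ case of the cited embedding theorem, which is presumably the same mechanism hidden inside it. Your use of $\mathbf{Var}_*$ throughout the layer-cake step, rather than the co-area formula of Theorem~\ref{coarea formula 1a}, is a clean way to avoid the a.e.-$t$ caveat, exactly as you note.
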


\begin{proof}
From the heat kernel upper bound \eqref{eq:subGauss-upper}, one has the ultracontractive estimate
$p_t(x,y) \le \frac{C}{t^{d_H/d_W}}$.
Moreover, from Theorem \ref{bounded embedding}, one has for every $f \in \B^{1,1-\frac{\kappa}{d_W}}(X)$,
\[
\|f\|_{1,1-\frac{\kappa}{d_W}} \le C\liminf_{s \to 0^+} \frac{1}{s^{1-\frac{\kappa}{d_W}}} \int_X P_s (|f-f(y)|)(y) d\mu(y).
\]
This verifies a condition denoted by $(P_{1,1-\frac{\kappa}{d_W}})$ in Definition~6.7 of~\cite{ABCRST1}, putting us in the framework of~\cite[Theorem 6.9]{ABCRST1} with $p=1$, $\alpha=1-\frac{\kappa}{d_W}$ and $\beta=\frac{d_H}{d_W}$. So we have
\[
\| f \|_{L^{1^*}(X,\mu)} \le C\|f\|_{1,1-\frac{\kappa}{d_W}}
\]
and the result follows from Theorem~\ref{bounded embedding}. 
\end{proof}

\begin{remark}
On the product space of nested fractals $(X^n, d_{X^n},\mu^{\otimes n})$, one has $\kappa=d_W-d_H(X)$. Thus the Sobolev exponent $1^*$ is given by
\[
\frac{1}{1^*}=1-\frac{d_W-(d_W-d_H(X))}{d_H(X^n)}=1-\frac1n,
\]
and the isoperimetric inequality becomes
\[
\mu(E)^{\frac{n-1}{n}} \le C P(E).
\]
These coincide with the Euclidean space $\mathbb R^n$.
\end{remark}

For the case $\kappa=d_W-d_H$ that corresponds to the situation in Theorem \ref{thm-xBE-FD}, 
 one has the following result. For $f \in L^\infty (X,\mu)$, we use $\mathbf{Osc} (f)$ to denote the essential supremum of $|f(x)-f(y)|$, $x,y \in X$.
\begin{proposition}\label{Sobolev 2}
Assume $\kappa=d_W-d_H>0$. Then $BV(X) \subset L^\infty(X,\mu)$ and there exists a constant $C>0$ such that for every $f \in BV(X)$,
\[
\mathbf{Osc} (f) \le C \mathbf{Var}(f).
\]
\end{proposition}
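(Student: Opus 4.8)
The plan is to read this as the endpoint case ($1^{*}=\infty$) of the Sobolev embedding in Theorem~\ref{Sobolev global}. Under the hypothesis $\kappa=d_W-d_H$ one has $1-\tfrac{\kappa}{d_W}=\tfrac{d_H}{d_W}$, so $BV(X)=\B^{1,d_H/d_W}(X)$ and the Besov smoothness parameter $\alpha=1-\tfrac{\kappa}{d_W}$ coincides \emph{exactly} with the ultracontractivity exponent $\beta=\tfrac{d_H}{d_W}$ coming from $p_t(x,y)\le Ct^{-d_H/d_W}$. This is precisely the threshold $\tfrac{1}{1^{*}}=1-\tfrac{d_W-\kappa}{d_H}=0$ that is excluded in Theorem~\ref{Sobolev global}, where $L^{1^{*}}$ degenerates to $L^{\infty}$. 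Because every seminorm in play is insensitive to additive constants, the natural conclusion is control of $\mathbf{Osc}(f)$ rather than of $\|f\|_{L^\infty}$ directly.

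I find the cleanest packaging to be via the co-area formula, which reduces everything to a single \emph{uniform lower bound on perimeter}: there is $c>0$ such that $P(E)\ge c$ for every Borel set with $0<\mu(E)<\infty$ and $\mathbf 1_E\in BV(X)$. Granting this, first note that for nonnegative $g\in BV(X)$ and $0<t<\operatorname{ess\,sup}g$ the level set $E_t(g)$ satisfies $\mu(E_t(g))\le t^{-1}\|g\|_{L^1}<\infty$ (Chebyshev) and $\mu(E_t(g))>0$ (definition of $\operatorname{ess\,sup}$), so the lower bound applies and Theorem~\ref{coarea formula 1a} gives
\[
\mathbf{Var}(g)\;\ge\;C_1\int_0^\infty P\bigl(E_t(g)\bigr)\,dt\;\ge\;C_1 c\operatorname{ess\,sup}g.
\]
For general $f\in BV(X)$ I would apply this to $f_+$ and $f_-$; since $x\mapsto x_\pm$ is $1$-Lipschitz we have $|f_\pm(x)-f_\pm(y)|\le|f(x)-f(y)|$, whence $\|f_\pm\|_{1,d_H/d_W}\le\|f\|_{1,d_H/d_W}$ and thus $\mathbf{Var}(f_\pm)\le C\mathbf{Var}(f)$ by Theorem~\ref{bounded embedding}. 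Then $\mathbf{Osc}(f)\le\mathbf{Osc}(f_+)+\mathbf{Osc}(f_-)\le\operatorname{ess\,sup}f_++\operatorname{ess\,sup}f_-\le C\mathbf{Var}(f)$.

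To obtain the lower bound $P(E)\ge c$ I would mirror the proof of Theorem~\ref{Sobolev global} at the endpoint. Theorem~\ref{bounded embedding} shows that $\|f\|_{1,d_H/d_W}\le C\mathbf{Var}(f)$ and verifies the pseudo-Poincar\'e condition $(P_{1,d_H/d_W})$ of \cite[Definition~6.7]{ABCRST1}; this verification does not use $d_W-\kappa<d_H$ and so remains valid here. One then invokes the $\alpha=\beta$ endpoint of the heat-semigroup Sobolev embedding of \cite{ABCRST1} (the companion to \cite[Theorem~6.9]{ABCRST1}), which yields $\mathbf{Osc}(f)\le C\|f\|_{1,d_H/d_W}$; applied to $f=\mathbf 1_E$ this reads $1\le C\|\mathbf 1_E\|_{1,d_H/d_W}\le C'P(E)$. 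Heuristically this is the borderline isoperimetric inequality obtained by formally setting $d_W-\kappa=d_H$ (exponent $0$) in the isoperimetric estimate of Theorem~\ref{Sobolev global}.

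The main obstacle is exactly this endpoint embedding, which genuinely requires the refined machinery of \cite{ABCRST1} rather than a soft heat-semigroup argument. Indeed, a naive dyadic telescoping $f=\sum_k\bigl(P_{2^{-k-1}}f-P_{2^{-k}}f\bigr)$ does not suffice: combining $wBE(\kappa)$, the H\"older kernel estimate~\eqref{eqn:wBEimpliesptHolder}, and the pseudo-Poincar\'e inequality (Proposition~\ref{pseudo poincare}) only produces $\mathbf{Osc}\bigl(P_{t/2}f-P_tf\bigr)\le C\mathbf{Var}(f)$, a bound that is scale-invariant and hence \emph{not} summable over $k$---a direct reflection of the criticality $\alpha=\beta$. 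The difficulty lives in the far field: one has $\|P_tf\|_{L^\infty}\le Ct^{-d_H/d_W}\|f\|_{L^1}$, which blows up as $t\to0^+$, and controlling it is equivalent to the quantitative non-degeneracy of the measure-theoretic boundary of a finite-perimeter set, namely that the interface contributes at the critical rate $s^{d_H/d_W}$ to $\int_E P_s\mathbf 1_{E^c}\,d\mu$. Supplying this is precisely what promotes the scale-invariant estimate above into the genuine $L^\infty$/oscillation bound.
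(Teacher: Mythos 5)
Your proof is correct and is essentially the paper's own argument: the paper likewise reduces, via the co-area formula (Theorem~\ref{coarea formula 1a}), to the uniform lower bound $P(E)\ge c$ for every finite-perimeter set of positive measure, which it imports directly from \cite[Corollary~6.6]{ABCRST1} (available precisely because $\kappa=d_W-d_H>0$ places the Besov exponent $1-\kappa/d_W=d_H/d_W$ at the ultracontractivity exponent), and then deduces the oscillation bound from $|f(x)-f(y)|=\int_{\Sigma(f)}|\mathbf{1}_{E_t(f)}(x)-\mathbf{1}_{E_t(f)}(y)|\,dt\le c^{-1}\int_{\Sigma(f)}P(E_t(f))\,dt$, where $\Sigma(f)=\{t:\mu(E_t(f))>0\}$ has finite Lebesgue measure. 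One small correction: the general ``$\alpha=\beta$ endpoint oscillation embedding'' you attribute to \cite{ABCRST1} as a companion of Theorem~6.9 is not what is used there (indeed, for general $f$ that statement is essentially the present proposition); only its indicator-function instance is needed, and that is exactly \cite[Corollary~6.6]{ABCRST1}, equivalently a two-line consequence of ultracontractivity via $\int_E P_t\mathbf{1}_E\,d\mu\le Ct^{-d_H/d_W}\mu(E)^2$ with $t$ chosen so that $Ct^{-d_H/d_W}\mu(E)=\tfrac12$. Your explicit treatment of signed $f$ through $f_\pm$, using the $1$-Lipschitz contraction on the Besov seminorm and Theorem~\ref{bounded embedding}, is if anything more careful than the paper's unelaborated ``without loss of generality $f\ge 0$.''
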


\begin{proof}
Let $f \in BV(X)$. Without loss of generality, we can assume $f \ge 0$ almost everywhere. For almost every $t \ge0 $ we define the set $E_t(f)=\{x\in X\, :\, f(x)>t\}$. Since $\kappa=d_W-d_H>0$, according to \cite[Corollary 6.6]{ABCRST1}, there is $c>0$ such that for every set $E$ of finite perimeter and positive measure, one has $P(E) \ge c$. 
However, from Theorem \ref{coarea formula 1a}, there is $C>0$, such that
\[
\int_0^\infty
 P(E_t(f))\, dt = \int_0^\infty \mathbf{Var} (\mathbf{1}_{E_t(f)}) \,dt \le C \mathbf{Var} (f ) <+\infty.
\]
Therefore the set $\Sigma (f)$ of $t$ values for which $\mu( E_t(f)) >0$ has finite Lebesgue measure. 
So from Fubini's theorem,
\[
\int_X \int_{\mathbb{R} \setminus \Sigma_f} \mathbf{1}_{E_t(f)}(x) dt d\mu(x) =\int_{\mathbb{R} \setminus \Sigma_f} \mu( E_t(f) ) dt =0
\]
and $\int_{\mathbb{R} \setminus \Sigma_f} \mathbf{1}_{E_t(f)}(x) dt=0$ almost everywhere.
Thus for almost every $x,y \in X$
\begin{align*}
| f(y)-f(x)| 
=\int_{0}^{+\infty} |\mathbf{1}_{E_t(f)}(x) -\mathbf{1}_{E_t(f)}(y) | dt 
&=\int_{\Sigma (f)} |\mathbf{1}_{E_t(f)}(x) -\mathbf{1}_{E_t(f)}(y) | dt \\
 & \le \frac{1}{c} \int_{\Sigma (f)} P(E_t(f)) dt 
 \le \frac{C}{c} \mathbf{Var} (f).\qedhere
\end{align*}
\end{proof}

\subsection{BV measures}\label{S:BV-measures}

Recall that for $f \in L^1(X,\mu)$ we set
\[
 \mathcal{M}_rf (y)=\frac{1}{r^{d_W-\kappa}\mu(B(y,r))} \int_{B(y,r)} |f(x) -f(y)| d\mu(x).
\]

\begin{definition}\label{def-BV-energy}
Let $f \in BV(X)$. A BV measure $\gamma_f$ is a Radon measure on $X$ such that 
there exists a sequence $r_n \searrow 0$, such that for every $g\in C_0(X)$,
\[
\lim_{n \to +\infty} \int_X g \mathcal{M}_{r_n} f d\mu = \int_X g d\gamma_f.
\]
\end{definition}

In other words, a BV measure is a cluster point of the family of measures $\mathcal{M}_{r} f d\mu$, $r>0$, in the vague 
topology on the space of Radon measures.

\begin{lemma}\label{lem:elementary}
 If $f \in BV(X)$, there exists at least one associated BV measure $\gamma_f$.
\end{lemma}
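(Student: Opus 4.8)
The plan is to realize $\gamma_f$ as a subsequential vague (weak-$*$) limit of the family of Radon measures $\nu_r:=\mathcal{M}_r f\,d\mu$, $r>0$. First I would record that, by~\eqref{eq:VarintermsofMf} together with Theorem~\ref{bounded embedding}, these are nonnegative Radon measures of uniformly bounded total mass,
\[
\nu_r(X)=\int_X \mathcal{M}_r f\,d\mu \le \sup_{\rho>0}\int_X\mathcal{M}_\rho f\,d\mu \le C\,\mathbf{Var}(f)<\infty .
\]
Since $\mathbf{Var}(f)=\liminf_{r\to0^+}\int_X\mathcal{M}_r f\,d\mu$, I would then fix a sequence $s_k\searrow0$ realizing this lower limit, so that $\nu_{s_k}(X)\to\mathbf{Var}(f)$.

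Next, because $X$ is locally compact and separable (Ahlfors regularity forces separability), the space $C_0(X)$ is separable with $M(X)=C_0(X)^*$, and the closed ball of radius $C\,\mathbf{Var}(f)$ in $M(X)$ is weak-$*$ sequentially compact. I would extract a subsequence $r_n$ of $(s_k)$ along which $\nu_{r_n}\rightharpoonup^*\gamma_f$ for some nonnegative Radon measure $\gamma_f$; this furnishes exactly the convergence $\int_X g\,\mathcal{M}_{r_n}f\,d\mu\to\int_X g\,d\gamma_f$ for every $g\in C_0(X)$ required in Definition~\ref{def-BV-energy}. Testing against $g\in C_c(X)$ with $0\le g\le 1$ and letting $g\nearrow 1$ gives the easy half $\gamma_f(X)\le\liminf_n\nu_{r_n}(X)=\mathbf{Var}(f)$.

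The hard part will be the matching lower bound $\gamma_f(X)\ge\mathbf{Var}(f)$, i.e.\ ruling out loss of mass at infinity; this is what pins down the normalization $\gamma_f(X)=\mathbf{Var}(f)$. When $X$ is compact it is immediate, since then $1\in C_0(X)$ and $\gamma_f(X)=\lim_n\nu_{r_n}(X)=\mathbf{Var}(f)$. In general it reduces to the tightness statement $\lim_{R\to\infty}\sup_n \nu_{r_n}\bigl(X\setminus B(x_0,R)\bigr)=0$ for a fixed basepoint $x_0$, which I would derive from the finiteness of the total variation together with $f\in L^1(X,\mu)$. The delicate point here is that the crude bound $|f(x)-f(y)|\le|f(x)|+|f(y)|$ is useless: it only yields a far-field estimate of order $r^{-(d_W-\kappa)}\int_{X\setminus B(x_0,R)}|f|\,d\mu$, which blows up as $r\to0^+$ since $d_W-\kappa=\lambda_1^\#>0$, so one must instead exploit the cancellation inherent in the variation. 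I would do this by inserting the coarea decomposition $|f(x)-f(y)|=\int_0^\infty|\mathbf 1_{E_t}(x)-\mathbf 1_{E_t}(y)|\,dt$ (as in Lemma~\ref{lem:Fubini for BV estimate} and Theorem~\ref{coarea formula 1a}) to rewrite the far-field contribution of $\nu_r$ as an integral in $t$ of the localized perimeter functionals of the superlevel sets $E_t$, and then use that $\int_0^\infty\mathbf{Var}(\mathbf 1_{E_t})\,dt<\infty$ to force the portion of these functionals supported outside $B(x_0,R)$ to vanish as $R\to\infty$, uniformly in $n$. Combining this with the upper bound above yields $\gamma_f(X)=\mathbf{Var}(f)$, so that $\gamma_f$ is an associated BV measure in the sense of Definition~\ref{def-BV-energy}.
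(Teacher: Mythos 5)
Your first two paragraphs coincide with the paper's entire proof of this lemma: the paper bounds $\nu_r(X)=\int_X\mathcal{M}_rf\,d\mu\le C\|f\|_{1,1-\kappa/d_W}\le C\,\mathbf{Var}(f)$ (via Theorem~\ref{bounded embedding}), extracts a vague subsequential limit along a sequence realizing the $\liminf$ in Definition~\ref{def-BV}, and normalizes by testing against a nondecreasing sequence $0\le g_n\in C_0(X)$ with $g_n\nearrow 1$ --- exactly your extraction and your monotone-approximation step. The paper stops there: it offers no tightness argument, and strictly speaking its $g_n\nearrow1$ device directly yields only $\gamma_f(X)\le\mathbf{Var}(f)$, so your instinct that the matching lower bound (no escape of mass to infinity when $X$ is noncompact) is the delicate point is a fair reading of what Definition~\ref{def-BV-energy} demands.

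But the repair you sketch does not close. After the coarea rewriting
\[
\nu_{r_n}\bigl(X\setminus B(x_0,R)\bigr)=\int_0^\infty\int_{X\setminus B(x_0,R)}\mathcal{M}_{r_n}\mathbf{1}_{E_t}\,d\mu\,dt,
\]
the integrable majorant $t\mapsto C\,\mathbf{Var}(\mathbf{1}_{E_t})$ supplied by Theorem~\ref{bounded embedding} and Theorem~\ref{coarea formula 1a} lets you send $R\to\infty$ uniformly in $n$ only if, for a.e.\ fixed $t$, you already know
\[
\lim_{R\to\infty}\,\sup_n\int_{X\setminus B(x_0,R)}\mathcal{M}_{r_n}\mathbf{1}_{E_t}\,d\mu=0,
\]
and this is verbatim the tightness statement you set out to prove, now for the indicator $\mathbf{1}_{E_t}$. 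Finiteness of $\int_0^\infty\mathbf{Var}(\mathbf{1}_{E_t})\,dt$ controls the total size of these functionals but says nothing about where their mass sits; the coarea step merely transfers the problem from $f$ to its superlevel sets. What would be needed is a localized estimate of the form $\int_A\mathcal{M}_r\mathbf{1}_E\,d\mu\le C\,(\text{boundary data near }A)$, uniform in $r$, and nothing in the paper provides one: Theorem~\ref{bounded embedding} compares full seminorms globally, and Theorem~\ref{equivalence BV} tests against a fixed $g\in\mathcal{D}$ only in the limits $t\to0^+$, not uniformly in the scale. Producing such a localization would face the same obstruction (no Lipschitz cutoffs with controlled energy) that the paper flags in its introduction. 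Note that for compactly supported $f$ the issue is vacuous, since $\mathcal{M}_rf$ vanishes at distance greater than $r$ from the support; but the lemma concerns all of $BV(X)$, and for that your "hard part'' remains unproven as written, whereas the paper simply does not treat it as part of the proof.
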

\begin{proof}
Let $\mu_r$ be the measure $d\mu_r := \mathcal{M}_r f \, d\mu$.
Note that for every $r>0$, $\mu_r (X) \le C \| f \|_{1, 1-\frac{\kappa}{d_W}}$, thus there is a sequence $r_n\to 0^+$ and a Radon measure $\gamma_f$ on $X$, such that $\mu_{r_n}$ converges vaguely to $\gamma_f$.
 
\end{proof}
%

\begin{remark}
\label{non uniqueness}
On certain fractal spaces it is known that the heat kernel has oscillations which preclude existence of a limiting density for a Weyl asymptotic~\cite[and references therein]{W15,WZ13,LPW11,Mattila,PW14,RZ12,Kajino,KigB,LP06}. Given the connection between $\mathbf{Var}(f)$ and $\mathbf{Var}_*(f)$, this suggests that measures of the type $\gamma_f$ may fail to be unique, but we do not study this phenomenon here. In the absence of uniqueness it is natural to consider upper and lower envelopes, which are discussed under certain extra assumptions in section~\ref{subsec-lower}, see in particular~\eqref{e-lower-env} and Theorem~\ref{thm-bv-kappa}. Upper envelope BV measures can be defined in a similar manner to~\eqref{e-lower-env}, but are omitted for the sake of brevity. \end{remark}


In the following, we denote by $\mathcal{D}$ the following class:
\[
\mathcal{D} =\left\{ P_\ve u: \ve>0 , u \in C_c(X) \right\}.
\]
Since the semigroup $P_t$ is Feller, $\mathcal{D}$ is dense for the supremum norm in $C_0(X)$. We note that from $wBE(\kappa)$, functions in $\mathcal{D}$ are $\kappa$-H\"older continuous.

\begin{theorem}\label{equivalence BV}
There exist constants $c,C>0$ such that for every $f \in BV(X) \cap L^\infty(X,\mu)$ and associated BV measure $\gamma_f$ we have for every $g \in \mathcal{D}$, $g \ge 0$,
\[
c \limsup_{t\to 0^+} \int_X g(y) \mathcal{Q}_tf (y) d\mu (y) \le \int_X g(y) d\gamma_f(y) \le C \liminf_{t\to 0^+} \int_X g(y) \mathcal{Q}_tf (y) d\mu (y),
\]
where 
\begin{align*}
\mathcal{Q}_t f (y) =\frac{1}{ t^{1-\frac{\kappa}{d_W}}} \int_Xp_{t}(x,y) |f(x)-f(y)| \,d\mu(x). 
\end{align*}

In particular, all the BV measures associated to a given $f$ are mutually equivalent with uniformly bounded densities.
\end{theorem}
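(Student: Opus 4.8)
The plan is to read $\int_X g\,d\gamma_f=\lim_n\Theta_g(r_n)$, where $r_n\searrow0$ is the sequence defining $\gamma_f$ and $\Theta_g(r):=\int_X g(y)\mathcal{M}_rf(y)\,d\mu(y)$, and to compare this with the heat functional $\Psi_g(t):=\int_X g(y)\mathcal{Q}_tf(y)\,d\mu(y)$. For $g\ge0$ any subsequential limit satisfies $\liminf_{r\to0^+}\Theta_g(r)\le\lim_n\Theta_g(r_n)\le\sup_{r>0}\Theta_g(r)$, so the whole statement reduces to three facts: a matched-scale bound $\liminf_{r\to0^+}\Theta_g(r)\le C\liminf_{t\to0^+}\Psi_g(t)$, a decomposition bound $\limsup_{t\to0^+}\Psi_g(t)\le C\limsup_{r\to0^+}\Theta_g(r)$, and a \emph{weighted locality estimate} $\sup_{r>0}\Theta_g(r)\le C\liminf_{r\to0^+}\Theta_g(r)$, i.e. the $g$-weighted analogue of Theorem~\ref{bounded embedding}. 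Granting these, $\int g\,d\gamma_f\le\sup_r\Theta_g\le C\liminf_r\Theta_g\le C\liminf_t\Psi_g$ and $\int g\,d\gamma_f\ge\liminf_r\Theta_g\ge c\limsup_r\Theta_g\ge c\limsup_t\Psi_g$, which is exactly the asserted sandwich; the argument is the weighted version of the squeeze behind $\mathbf{Var}\simeq\mathbf{Var}_*$.

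For the two comparison bounds I would carry the weight $g(y)\ge0$ through the pointwise-in-$y$ estimates already used in Lemmas~\ref{comparison var} and~\ref{lem:charfunctinBesov}. The sub-Gaussian lower bound gives $\mathcal{M}_rf(y)\le C\,\mathcal{Q}_{r^{d_W}}f(y)$ pointwise, hence $\Theta_g(r)\le C\,\Psi_g(r^{d_W})$ and the first inequality after reparametrising $t=r^{d_W}$. Conversely, splitting the $x$-integral defining $\mathcal{Q}_tf(y)$ at $d(x,y)=\delta t^{1/d_W}$ and using~\eqref{eq:subGauss-upper} and~\eqref{eq:boundpolyptbypct} yields, pointwise in $y$, $\mathcal{Q}_tf(y)\le C\delta^{d_H+d_W-\kappa}\mathcal{M}_{\delta t^{1/d_W}}f(y)+A\,\mathcal{Q}_{ct}f(y)$ with $A$ as small as desired once $\delta$ is large; integrating against $g$ and iterating, with the uniform bound $\Psi_g(t)\le\|g\|_{L^\infty}\|f\|_{1,1-\kappa/d_W}$ controlling the tail, gives the decomposition bound. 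Neither step uses the regularity of $g$.

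The crux is the weighted locality estimate, and this is where $g\in\mathcal{D}$ matters. The natural route is to feed $g$ into Lemma~\ref{lem:Fubini for BV estimate}, reducing to $\int_0^\infty\|P_s(g\mathbf{1}_{E_t})-g\mathbf{1}_{E_t}\|_{L^1}\,dt$, and to bound the integrand by the pseudo-Poincar\'e inequality (Proposition~\ref{pseudo poincare} with $p=1$, $\beta_1=1-\frac{\kappa}{d_W}$). The main obstacle is the commutator $[P_s,g]\mathbf{1}_{E_t}=P_s(g\mathbf{1}_{E_t})-g\,P_s\mathbf{1}_{E_t}$: estimating $|g(x)-g(y)|\le Cd(x,y)^\kappa$ via the $wBE(\kappa)$ H\"older continuity of $g$ together with~\eqref{eq:boundpolyptbypct} produces a contribution scaling like $s^{2\kappa/d_W-1}$, which fails to vanish, and indeed blows up, precisely when $\kappa<\frac{d_W}{2}$; so the naive commutator estimate is insufficient. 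I expect this to be the hard part, and would resolve it by first reducing to characteristic functions $f=\mathbf{1}_E$ through the co-area identity of Theorem~\ref{coarea formula 1a}, where the extra structure $\mathcal{Q}_t\mathbf{1}_E(y)=t^{-(1-\kappa/d_W)}P_t\mathbf{1}_{E^c}(y)$ on $E$ together with the $wBE(\kappa)$ H\"older bound on $P_t\mathbf{1}_E$ lets one pair $g$ against the boundary layer without the lossy commutator step, and then re-integrating in $t$.

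Finally, the ``in particular'' clause is a soft consequence of the sandwich. For two BV measures $\gamma_f,\gamma_f'$ of the same $f$, the two inequalities give $\int_X g\,d\gamma_f\le C\liminf_t\Psi_g\le \tfrac{C}{c}\,c\limsup_t\Psi_g\le\tfrac{C}{c}\int_X g\,d\gamma_f'$ for every $0\le g\in\mathcal{D}$. Since $\mathcal{D}$ is dense in $C_0(X)$ (the semigroup is Feller) and this inequality passes to nonnegative limits, it holds for all $0\le g\in C_0(X)$, so $\gamma_f\le\tfrac{C}{c}\gamma_f'$ as Radon measures; by symmetry $\gamma_f'\le\tfrac{C}{c}\gamma_f$. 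Hence all BV measures associated with $f$ are mutually absolutely continuous with Radon-Nikodym densities bounded between $c/C$ and $C/c$.
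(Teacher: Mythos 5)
Your skeleton is sound and essentially coincides with the paper's: your matched-scale bound (a) is Lemma~\ref{estimate BV measure 1}, your decomposition bound (b) is a $\limsup$ variant of Lemma~\ref{estimate BV measure 4} (and is valid, given the a priori bound $\Psi_g(t)\le\|g\|_\infty\|f\|_{1,1-\kappa/d_W}$ supplied by Theorem~\ref{bounded embedding}), and both the vague-limit bookkeeping and the density argument for the ``in particular'' clause are correct (one only needs the $\limsup$/$\liminf$ form of your (c), not the full $\sup_r$). But there is a genuine gap exactly where you flag one: the weighted locality estimate (c) --- equivalently the oscillation bound $\limsup_{t\to0^+}\Psi_g(t)\le C\liminf_{t\to0^+}\Psi_g(t)$, which is Lemma~\ref{Lemma 3:BCD} of the paper --- is never actually proved. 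Your proposed fix does not dissolve the difficulty: after reducing to indicators via the weighted Fubini of Lemma~\ref{lem:Fubini for BV estimate}, you still need a \emph{weighted} pseudo-Poincar\'e inequality, i.e.\ control of $\|P_s(g\mathbf{1}_{E_t})-g\mathbf{1}_{E_t}\|_{L^1}$ by $s^{1-\kappa/d_W}\liminf_{\tau\to0^+}\int_X g\,\mathcal{Q}_\tau\mathbf{1}_{E_t}\,d\mu$, and the commutator $[P_s,g]\mathbf{1}_{E_t}$ reappears there unchanged; the indicator structure does not remove it. Nor does the identity $\mathcal{Q}_t\mathbf{1}_E=t^{-(1-\kappa/d_W)}P_t\mathbf{1}_{E^c}$ on $E$ give a cross-scale comparison: unweighted, one can at least exploit $\int_E P_t\mathbf{1}_{E^c}\,d\mu=\langle(I-P_t)\mathbf{1}_E,\mathbf{1}_E\rangle$ and the spectral monotonicity of $t\mapsto t^{-1}\langle(I-P_t)\mathbf{1}_E,\mathbf{1}_E\rangle$, but the weight $g$ destroys this quadratic-form structure, and ``pairing $g$ against the boundary layer, then re-integrating in $t$'' names no mechanism. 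As written, the central inequality of the theorem rests on an unproven claim.

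The paper's engine (Lemma~\ref{Lemma 2:BCD}) circumvents the lossy commutator by a parabolic duality computation rather than a direct estimate: one writes
\[
\int_X\bigl(fg-P_t(fg)\bigr)h\,d\mu=-\lim_{\tau\to0^+}\int_0^t\mathcal{E}_\tau(fg,P_sh)\,ds,
\]
and decomposes $\mathcal{E}_\tau(fg,P_sh)$ by product rules into three pieces, using \emph{both} properties of $g=P_\ve u\in\mathcal{D}$: the cross term carrying $(f(x)-f(y))(g(x)-g(y))$ is bounded \emph{uniformly in $s$ and $\tau$} by $C\|h\|_\infty\|u\|_\infty\ve^{-\kappa/d_W}\|f\|_{1,1-\kappa/d_W}$ (quantified $\kappa$-H\"older continuity of $P_\ve u$ plus~\eqref{eq:boundpolyptbypct}); the term $\mathcal{E}_\tau(fP_sh,g)\to\int_X(Lg)fP_sh\,d\mu$ is bounded because $Lg\in L^1(X,\mu)$; and the surviving term is $\le C\|h\|_\infty s^{-\kappa/d_W}\int_X g\,\mathcal{Q}_\tau f\,d\mu$. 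The point your commutator computation misses is that after the $\int_0^t ds$ integration the two bad pieces contribute only $O(t)$, which is $o(t^{1-\kappa/d_W})$ since $\kappa>0$, while $\int_0^t s^{-\kappa/d_W}ds$ produces exactly $t^{1-\kappa/d_W}$ times the desired $\liminf$; the naive bound, by contrast, compares rates at a \emph{single} time scale, where $s^{\kappa/d_W}$ versus $s^{1-\kappa/d_W}$ indeed loses whenever $\kappa<d_W/2$, as you observed. With Lemma~\ref{Lemma 2:BCD} in hand, Lemma~\ref{Lemma 3:BCD} (reverse Fatou and Fatou through the level sets $E_t$) delivers the oscillation bound, and your sandwich then closes exactly as you describe.
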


The proof of the theorem is rather long and will be divided into several lemmas. 

\begin{lemma}\label{Lemma 2:BCD}
There exists a constant $C>0$ such that for every $f \in BV(X) \cap L^\infty (X,\mu)$ and every $g \in \mathcal{D}$,
\begin{align*}
 \limsup_{t\to 0^+} \frac{1}{t^{1-\frac{\kappa}{d_W}}} \left \| fg -P_t(fg) \right\|_{L^1(X,\mu)} \le C \liminf_{s\to 0^+} \int_X g(y) \mathcal{Q}_sf (y) d\mu (y) .
\end{align*}
\end{lemma}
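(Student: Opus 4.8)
The plan is to reduce the claim to a \emph{weighted} pseudo-Poincaré inequality by means of a Leibniz decomposition with remainder, and then to run the duality argument of Proposition~\ref{pseudo poincare} while keeping $g$ as a localizing weight. Since $g=P_\varepsilon u\in\mathcal{D}$ is only $\kappa$-Hölder and, when $\kappa<\tfrac{d_W}{2}$, need not lie in $BV(X)$, one cannot form $\mathbf{Var}(fg)$ and apply the pseudo-Poincaré inequality to $fg$ directly; the decomposition is precisely what circumvents this. Writing $\sigma_t=\operatorname{sgn}(P_tf-f)$ and using $P_t1=1$, I would first record the pointwise identity
\[
P_t(fg)(y)-f(y)g(y)=g(y)\bigl(P_tf(y)-f(y)\bigr)+f(y)\bigl(P_tg(y)-g(y)\bigr)+R_t(y),
\]
\[
R_t(y)=\int_X p_t(x,y)\bigl(f(x)-f(y)\bigr)\bigl(g(x)-g(y)\bigr)\,d\mu(x),
\]
so that, using $g\ge0$,
\[
\|fg-P_t(fg)\|_{L^1}\le \int_X g\,|P_tf-f|\,d\mu+\int_X |f|\,|P_tg-g|\,d\mu+\int_X|R_t|\,d\mu.
\]

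The last two summands are error terms that vanish after division by $t^{1-\kappa/d_W}$. For the middle term I would use analyticity of the semigroup on $g=P_\varepsilon u$ to get $\|P_tg-g\|_{L^\infty}\le C_{u,\varepsilon}\,t$, whence $t^{-(1-\kappa/d_W)}\int_X|f|\,|P_tg-g|\le C_{u,\varepsilon}\|f\|_{L^1}\,t^{\kappa/d_W}\to0$. For the remainder I would bound $|g(x)-g(y)|\le C_g\,d(x,y)^\kappa$ by $wBE(\kappa)$, then apply~\eqref{eq:boundpolyptbypct} together with $f\in BV(X)\subset\B^{1,1-\kappa/d_W}(X)$ to obtain $\int_X|R_t|\le C_g\,t^{\kappa/d_W}\iint p_{ct}|f(x)-f(y)|\le C\,t\,\|f\|_{1,1-\kappa/d_W}$, again $o(t^{1-\kappa/d_W})$.

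The heart of the matter is the main term $\int_X g\,|P_tf-f|\,d\mu$, for which I would prove the weighted pseudo-Poincaré bound $\int_X g\,|P_tf-f|\le C\,t^{1-\kappa/d_W}\liminf_{\tau\to0^+}\int_X g\,\mathcal{Q}_\tau f\,d\mu$. Following Proposition~\ref{pseudo poincare}, I would write $\int_X g|P_tf-f|=\int_X(P_tf-f)\,g\sigma_t\,d\mu=-\lim_{\tau\to0}\int_0^t\mathcal{E}_\tau\bigl(f,P_s(g\sigma_t)\bigr)\,ds$, and in the symmetric formula for $\mathcal{E}_\tau$ freeze the weight at the base point:
\[
P_s(g\sigma_t)(x)-P_s(g\sigma_t)(y)=g(y)\bigl(P_s\sigma_t(x)-P_s\sigma_t(y)\bigr)+\mathrm{Err}(x,y).
\]
The principal piece keeps $g(y)$ inside the integral; bounding $|P_s\sigma_t(x)-P_s\sigma_t(y)|\le C\,d(x,y)^\kappa s^{-\kappa/d_W}$ by $wBE(\kappa)$ (here $\|\sigma_t\|_{L^\infty}\le1$, so no factor $\|g\|_{L^\infty}$ intrudes) and converting $p_\tau(x,y)\,d(x,y)^\kappa$ via~\eqref{eq:boundpolyptbypct}, the powers of $\tau$ cancel and $\int_0^t s^{-\kappa/d_W}\,ds$ converges, producing exactly $C\,t^{1-\kappa/d_W}\int_X g\,\mathcal{Q}_{c\tau}f$; taking $\liminf_\tau$ yields the asserted bound, with $g$ still localizing.

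The main obstacle is the control of the commutator $\mathrm{Err}(x,y)=P_s\bigl((g-g(y))\sigma_t\bigr)(x)-P_s\bigl((g-g(y))\sigma_t\bigr)(y)$. The naive estimates — $wBE(\kappa)$ applied to $(g-g(y))\sigma_t$, or the uniform Hölder bound of Lemma~\ref{Holder kernel} — produce a factor $s^{-\kappa/d_W}$ or $s^{\kappa/d_W}$ that fails to cancel against the $\tau^{-1}$ in $\mathcal{E}_\tau$, so that the $\tau\to0^+$ limit of the crude bound diverges. To close the argument one needs the sharper estimate $|\mathrm{Err}(x,y)|\le C_g\,d(x,y)^\kappa$, \emph{uniformly in $s$}. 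I would get this by splitting the $z$-integral into $\{d(z,y)<2d(x,y)\}$, where $|g(z)-g(y)|\le C_g\,d(x,y)^\kappa$ and $\int|p_s(x,z)-p_s(y,z)|\,d\mu(z)\le2$, and its complement, where one needs a refinement of Lemma~\ref{Holder kernel} combining the Hölder modulus $C\,d(x,y)^\kappa s^{-(\kappa+d_H)/d_W}$ with sub-Gaussian spatial decay of $p_s$; integrating this decaying Hölder bound against $d(z,y)^\kappa$ reproduces the factor $s^{(\kappa+d_H)/d_W}$, which cancels. Establishing this refined kernel regularity is the delicate point. With $|\mathrm{Err}|\le C_g\,d(x,y)^\kappa$ in hand, the commutator contributes only $O(t)\,\|f\|_{1,1-\kappa/d_W}=o(t^{1-\kappa/d_W})$, and assembling the three terms gives $\limsup_{t\to0^+} t^{-(1-\kappa/d_W)}\|fg-P_t(fg)\|_{L^1}\le C\liminf_{s\to0^+}\int_X g\,\mathcal{Q}_sf\,d\mu$.
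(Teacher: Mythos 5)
Your skeleton is sound and largely parallels the paper's: the representation $\int_X(fg-P_t(fg))h\,d\mu=-\lim_{\tau\to0^+}\int_0^t\mathcal{E}_\tau(fg,P_sh)\,ds$, the use of $wBE(\kappa)$ together with~\eqref{eq:boundpolyptbypct} while keeping the weight at a base point (which is exactly how the paper produces $\int_X g\,\mathcal{Q}_{c\tau}f\,d\mu$ with the harmless $s^{-\kappa/d_W}$ integrable over $(0,t)$), and your treatment of the terms $f(P_tg-g)$ and $R_t$ is correct. But the crux of your argument --- the uniform-in-$s$ commutator bound $|\mathrm{Err}(x,y)|\le C_g\,d(x,y)^\kappa$ --- is a genuine gap. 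As you yourself note, it requires a H\"older estimate for the heat kernel \emph{with sub-Gaussian spatial decay at the exponent $\kappa$}, i.e.
\[
|p_s(x,z)-p_s(y,z)|\le C\,\frac{d(x,y)^\kappa}{s^{(\kappa+d_H)/d_W}}\exp\Bigl(-c\Bigl(\tfrac{d(y,z)^{d_W}}{s}\Bigr)^{\frac1{d_W-1}}\Bigr),
\]
and this is neither proved in the paper nor derivable from its hypotheses by routine means. Interpolating Lemma~\ref{Holder kernel} against the upper bound in~\eqref{eq:subGauss-upper} via $\min(a,b)\le a^{1-\theta}b^{\theta}$ yields only $\int_{\{d(z,y)\ge 2d(x,y)\}}|p_s(x,z)-p_s(y,z)|\,d(z,y)^\kappa\,d\mu(z)\le C\,d(x,y)^{\kappa(1-\theta)}s^{\theta\kappa/d_W}$, a loss of the factor $(s^{1/d_W}/d(x,y))^{\theta\kappa}$; and parabolic-Harnack-type oscillation arguments give decay-compatible H\"older regularity only at some exponent $\theta>0$ that is in general strictly below the critical $\kappa$ of Assumption~\ref{assumpt:wBEatcrit}, in which case your commutator contribution carries a factor $\tau^{(\theta-\kappa)/d_W}$ and blows up as $\tau\to0^+$. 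The regime where the bound \emph{is} elementary (for $s\le d(x,y)^{d_W}$, mass plus the moment estimate $\int p_s(y,z)d(z,y)^\kappa\,d\mu(z)\le Cs^{\kappa/d_W}$ give $|\mathrm{Err}|\le CC_g d(x,y)^\kappa$) is precisely the regime that is irrelevant in the limit, since $p_\tau$ concentrates at $d(x,y)\simeq\tau^{1/d_W}\ll s^{1/d_W}$. (A minor inaccuracy: the naive $wBE(\kappa)$ bound applied to $(g-g(y))\sigma_t$ does not make the $\tau\to0^+$ limit diverge; it gives the finite but delocalized term $Ct^{1-\kappa/d_W}\|g\|_\infty\|f\|_{1,1-\kappa/d_W}$ --- fatal because it loses the localization by $g$, not because it diverges.)

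The paper avoids the commutator entirely by performing the Leibniz decomposition at the level of the discrete forms $\mathcal{E}_\tau$ rather than inside $P_s$. For $h\in L^\infty(X,\mu)$ it compares $\mathcal{E}_\tau(fg,P_sh)$ with $\mathcal{E}_\tau(gP_sh,f)$ and $\mathcal{E}_\tau(fP_sh,g)$; the two algebraic remainders are $\tau^{-1}\iint p_\tau(f(x)-f(y))(g(x)-g(y))P_sh(x)$, controlled by the $\kappa$-H\"older continuity of $g=P_\ve u$ (cost $O(t)\,\ve^{-\kappa/d_W}\|f\|_{1,1-\kappa/d_W}$), and $\tau^{-1}\iint p_\tau(f(x)-f(y))(P_sh(x)-P_sh(y))g(x)$, which is your main term and is handled the same way. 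The decisive difference is that the leftover term $\mathcal{E}_\tau(fP_sh,g)$ is killed not by any kernel regularity but by the generator: since $g=P_\ve u$ lies in the $L^1$-domain of $L$, one has $\mathcal{E}_\tau(fP_sh,g)\to\int_X(Lg)fP_sh\,d\mu$, bounded uniformly in $s$ by $\|f\|_{L^\infty}\|h\|_{L^\infty}\|Lg\|_{L^1}$, contributing $O(t)=o(t^{1-\kappa/d_W})$. If you want to repair your proof without establishing the refined kernel estimate, replace the freezing of $g$ inside $P_s$ by this form-level identity and exploit $Lg\in L^1(X,\mu)$ --- this is the piece of structure of $\mathcal{D}$ that your argument uses only weakly (via analyticity) but that the paper uses exactly where your commutator sits.
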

\begin{proof}
As in the proof of Proposition~\ref{pseudo poincare}
\begin{equation}\label{eq:EtaufgPsh}
\biggl| \int_X \bigl( f g-P_t(f g) \bigr)h\,d\mu\biggr|
\leq \liminf_{\tau\to0+} \int_0^t \bigl|\mathcal{E}_\tau(f g, P_sh)\bigr| \,ds.
\end{equation}

Recall $g=P_\ve u$ for some $u\in C_c(X)$ and $\ve>0$. Then, from a standard energy calculation followed by using the $wBE(\kappa)$ estimate and~\eqref{eq:boundpolyptbypct} as in the proof of Proposition~\ref{pseudo poincare}, we may write
\begin{align*}
 \lefteqn{ \bigl| \DF_\tau( gP_sh,f)+\DF_\tau(fP_sh, g)-\DF_\tau(f g,P_sh)\bigr|}\quad&\\
 &=\frac1{\tau}\biggl|\int_X\int_X p_\tau(x,y) (f(x)-f(y))( g(x)- g(y))P_sh(x) d\mu(x)d\mu(y)\biggr|\\
 &\leq C\|h\|_\infty \|u\|_\infty \Bigl(\frac\tau\ve\Bigr)^{\kappa/d_W} \frac{1}{\tau}\int_X\int_X p_{c\tau}(x,y)|f(x)-f(y)|d\mu(x)d\mu(y)\\
 &\leq C\|h\|_\infty \|u\|_\infty \ve^{-\kappa/d_W} \|f\|_{1,1-\kappa/d_W}.
\end{align*}
Almost the same argument, now also using $g\geq0$, shows
\begin{align*}
 \lefteqn{ \bigl| \DF_\tau( gP_sh,f)+\DF_\tau(f g,P_sh)-\DF_\tau(fP_sh, g)\bigr|}\quad&\\
 &=\frac1{\tau} \biggl|\int_X\int_X p_\tau(x,y) (f(x)-f(y))(P_sh(x)-P_sh(y)) g(x) d\mu(x)d\mu(y)\biggr|\\
 &\leq C\|h\|_\infty \Bigl(\frac\tau s\Bigr)^{\kappa/d_W}\frac1{\tau} \int_X\int_X p_{c\tau}(x,y)|f(x)-f(y)| g(x) d\mu(x)d\mu(y)\\
 &= C\|h\|_\infty s^{-\kappa/d_W} \int_X g(y) \mathcal{Q}_{c\tau} f(y)\,d\mu(y).
\end{align*}
Taking the difference of these expressions, integrating with respect to $s$ and taking $\liminf_{\tau\to0^+}$ we find 
\begin{align*}
 \lefteqn{\liminf_{\tau\to0^+} 2\int_0^t \bigl|\DF_\tau(f g,P_sh)- \DF_\tau (fP_sh, g)\bigr|\,ds}\quad&\\
 &\leq Ct \|h\|_\infty \|u\|_\infty \ve^{-\kappa/d_W} \|f\|_{1,1-\kappa/d_W} + C\|h\|_\infty t^{1-\kappa/d_W} \liminf_{\tau\to0^+} \int_X g(y) \mathcal{Q}_\tau f (y)\,d\mu(y).
 \end{align*}
However $ g$ is in the $L^1$ domain of $L$ so $\lim_{\tau\to0^+}\DF_\tau (fP_sh, g)=\int_X (L g)fP_sh$ and thus $|\DF_\tau (fP_sh, g)|\leq 2 \|fP_sh\|_{L^\infty(X,\mu)}\|L g\|_{L^1(X,\mu)}$ for all sufficiently small $\tau$, independent of $s$. In particular
\begin{align*}
 \liminf_{\tau\to0^+} 2\int_0^t \bigl|\DF_\tau(f g,P_sh)\bigr|\,ds
 &\leq Ct \|h\|_{L^\infty(X,\mu)} \Bigl( \|f\|_{L^\infty(X,\mu)} \|L g\|_{L^1(X,\mu)} + \ve^{-\kappa/d_W} \| u \|_\infty \|f\|_{1,1-\kappa/d_W}\Bigr)\\
 &\quad+ C\|h\|_\infty t^{1-\kappa/d_W}\liminf_{\tau\to0^+}\int_X g(y)\mathcal{Q}_\tau f(y)\,d\mu(y).
 \end{align*}
Comparing this to~\eqref{eq:EtaufgPsh}, dividing by $t^{1-\kappa/d_W}$ and taking $\limsup_{t\to0^+}$ complete the proof by $L^1$-$L^\infty$ duality.
\end{proof}

\begin{remark}\label{rem:BCD2general}
The conclusion of Lemma \ref{Lemma 2:BCD} also holds for $f=f_0+c$, where $f_0\in BV(X) \cap L^{\infty}(X,\mu)$ and $c$ is a constant. Indeed, note that $\|f\|_{1, 1-\kappa/d_W}=\|f_0\|_{1, 1-\kappa/d_W}$ and $\mathcal{Q}_\tau f(y)=\mathcal{Q}_\tau f_0(y)$, the above argument then applies.
\end{remark}

\begin{lemma}\label{Lemma 3:BCD}
There is $C>0$ such that for $f \in BV(X)\cap L^\infty(X,\mu)$, $f \ge 0$, we have for every $g \in \mathcal{D}$, $g \ge 0$,
\[
\limsup_{t\to 0^+} \int_X g(y) \mathcal{Q}_tf (y) d\mu (y) \le C \liminf_{t\to 0^+} \int_X g(y) \mathcal{Q}_tf (y) d\mu (y) .
\]
\end{lemma}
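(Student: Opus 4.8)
The plan is to pass to the level sets of $f$, apply the preceding lemma (Lemma~\ref{Lemma 2:BCD}) to each indicator $\mathbf{1}_{E_t}$, and reassemble using the coarea identity $\int_0^\infty|\mathbf{1}_{E_t}(x)-\mathbf{1}_{E_t}(y)|\,dt=|f(x)-f(y)|$. Throughout write $\Psi_g(s)=\int_X g\,\mathcal{Q}_s f\,d\mu$ and $E_t=\{x:f(x)>t\}$. First I would invoke Lemma~\ref{lem:Fubini for BV estimate} (applicable since $f,g\ge0$) to get, for every $s>0$,
\begin{equation*}
\Psi_g(s)\le \frac{2}{s^{1-\frac{\kappa}{d_W}}}\int_0^\infty\bigl\|P_s(g\mathbf{1}_{E_t})-g\mathbf{1}_{E_t}\bigr\|_{L^1(X,\mu)}\,dt=:\int_0^\infty h_s(t)\,dt.
\end{equation*}
Because $f\in BV(X)$, Theorem~\ref{coarea formula 1a} gives $\mathbf{1}_{E_t}\in BV(X)\cap L^\infty(X,\mu)$ for a.e.\ $t$, so Lemma~\ref{Lemma 2:BCD}, applied with $f$ replaced by $\mathbf{1}_{E_t}$, yields
\begin{equation*}
\limsup_{s\to0^+}h_s(t)\le C\,\Lambda(t),\qquad \Lambda(t):=\liminf_{\sigma\to0^+}\int_X g\,\mathcal{Q}_\sigma\mathbf{1}_{E_t}\,d\mu .
\end{equation*}

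The right-hand side is controlled by the target quantity: by Fatou's lemma and the coarea identity,
\begin{equation*}
\int_0^\infty\Lambda(t)\,dt\le \liminf_{\sigma\to0^+}\int_0^\infty\!\int_X g\,\mathcal{Q}_\sigma\mathbf{1}_{E_t}\,d\mu\,dt=\liminf_{\sigma\to0^+}\Psi_g(\sigma).
\end{equation*}
Hence everything reduces to justifying the interchange $\limsup_{s\to0^+}\int_0^\infty h_s\,dt\le\int_0^\infty\limsup_{s\to0^+}h_s\,dt$, that is, a reverse Fatou step. This is the crux of the argument, and I expect it to be the main obstacle. The obvious dominating function coming from the pseudo-Poincar\'e inequality (Proposition~\ref{pseudo poincare}), namely $C\,\mathbf{Var}_*(g\mathbf{1}_{E_t})$, is unavailable: a function $g\in\mathcal{D}$ need not belong to $BV(X)$, so $\mathbf{Var}_*(g\mathbf{1}_{E_t})$ is in general infinite. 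Likewise, splitting $h_s$ into a commutator $[g,P_s]\mathbf{1}_{E_t}$ plus a term $g(P_s\mathbf{1}_{E_t}-\mathbf{1}_{E_t})$ is fatal, since it both discards the weight $g$ and leaves a commutator whose crude $\kappa$-H\"older estimate diverges like $s^{2\kappa/d_W-1}$ as $s\to0^+$.

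The remedy is to read off a uniform dominating function from the \emph{non-asymptotic} inequality already established inside the proof of Lemma~\ref{Lemma 2:BCD}: that argument bounds, for all small $s$ and with $g=P_\ve u$,
\begin{equation*}
h_s(t)\le C\,\Lambda(t)+C\,s^{\kappa/d_W}B(t),\qquad B(t):=\|\mathbf{1}_{E_t}\|_\infty\|Lg\|_{L^1(X,\mu)}+\ve^{-\kappa/d_W}\|\mathbf{1}_{E_t}\|_{1,1-\frac{\kappa}{d_W}},
\end{equation*}
so that for $s\le s_0$ one has the $s$-independent bound $h_s(t)\le D(t):=C\,\Lambda(t)+C\,s_0^{\kappa/d_W}B(t)$. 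This $D$ is integrable: $\int_0^\infty\Lambda\,dt\le\liminf_\sigma\Psi_g(\sigma)<\infty$, and $\int_0^\infty B(t)\,dt<\infty$ because $f\in L^\infty(X,\mu)$ confines the level-set range to $t\in[0,\|f\|_\infty]$, so $\int_0^\infty\|\mathbf{1}_{E_t}\|_\infty\|Lg\|_{L^1}\,dt=\|f\|_\infty\|Lg\|_{L^1}$, while Theorem~\ref{bounded embedding} together with the coarea formula give $\int_0^\infty\|\mathbf{1}_{E_t}\|_{1,1-\frac{\kappa}{d_W}}\,dt\le C\int_0^\infty P(E_t)\,dt\le C\,\mathbf{Var}(f)<\infty$. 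With $D$ in hand the reverse Fatou step is legitimate and, since the $s^{\kappa/d_W}B(t)$ term vanishes in the $\limsup$, combining the three displays yields $\limsup_{s\to0^+}\Psi_g(s)\le C\liminf_{\sigma\to0^+}\Psi_g(\sigma)$, which is the assertion. Thus the finiteness of the level-set range afforded by $f\in L^\infty$ and the integrability $\int_0^\infty P(E_t)\,dt<\infty$ from the coarea formula are precisely what make the interchange work.
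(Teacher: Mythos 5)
Your proposal is correct and follows essentially the same route as the paper's proof: Lemma~\ref{lem:Fubini for BV estimate} to pass to level sets, a reverse Fatou step, Lemma~\ref{Lemma 2:BCD} applied to each $\mathbf{1}_{E_t}$, Fatou's lemma, and the coarea identity to reassemble into $\liminf_{\sigma\to0^+}\Psi_g(\sigma)$. You are in fact more careful than the paper, which simply invokes ``the reverse Fatou lemma'' without exhibiting an integrable dominating function; your bound $h_s(t)\le C\Lambda(t)+Cs_0^{\kappa/d_W}B(t)$, extracted from the non-asymptotic estimate inside the proof of Lemma~\ref{Lemma 2:BCD} and shown integrable using $f\in L^\infty(X,\mu)$ together with Theorem~\ref{bounded embedding} and the coarea formula, supplies exactly the justification the paper leaves implicit (and you correctly note that the naive dominator $\mathbf{Var}_*(g\mathbf{1}_{E_t})$ is unavailable since $g\in\mathcal{D}$ need not lie in $BV(X)$).
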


\begin{proof}
With $M=\|f\|_\infty$, use Lemma~\ref{lem:Fubini for BV estimate} and  Lemma~\ref{Lemma 2:BCD}, Remark \ref{rem:BCD2general}, the reverse Fatou lemma, and the Fatou lemma to deduce
 \begin{align*}
 & \limsup_{s \to 0^+} \frac{1}{s^{1-\frac{\kappa}{d_W}}} \int_X \int_X g(y) p_s(x,y) |f(x)-f(y)| d\mu(x) d\mu(y) 
 \\
 \le &  \int_{0}^M \limsup_{s\to 0^+}\frac{1}{s^{1-\frac{\kappa}{d_W}}} \biggl( \| P_s(\mathbf{1}_{E_t(f)}g) - \mathbf{1}_{E_t(f)}g \|_{L^1(X,\mu)}+\| P_s(\mathbf{1}_{X\setminus E_t(f)}g) - \mathbf{1}_{X\setminus E_t(f)}g \|_{L^1(X,\mu)} \biggr)dt 
 \\
 \le & C \int_{0}^M \liminf_{s\to 0^+}
\frac{1}{ s ^{1-\frac{\kappa}{d_W}}}\int_X \int_X g(y) p_{s}(x,y) |\mathbf{1}_{E_t(f)}(x)-\mathbf{1}_{E_t(f)}(y)| \,d\mu(x) \,d\mu(y) \, dt \\
\le & C \liminf_{s\to 0^+}
\frac{1}{ s ^{1-\frac{\kappa}{d_W}}} \int_{0}^M \int_X \int_X g(y) p_{s}(x,y) |\mathbf{1}_{E_t(f)}(x)-\mathbf{1}_{E_t(f)}(y)| \,d\mu(x) \,d\mu(y) \, dt \\
\le & C \liminf_{s\to 0^+}
\frac{1}{ s ^{1-\frac{\kappa}{d_W}}} \int_X \int_X g(y) p_{s}(x,y) |f(x)-f(y)| \,d\mu(x) \,d\mu(y) \, dt.\qedhere
\end{align*}
\end{proof}

\begin{lemma}\label{estimate BV measure 1}
There is $C>0$ such that for $f \in BV(X)$ and $g \in C_0(X),g\ge 0$, one has for $t>0$
\[
\int_X g(y) \mathcal{Q}_t f (y) d\mu (y) \ge C \int_X g(y) \mathcal{M}_{t^{1/d_W}}f (y) d\mu (y).
\]
\end{lemma}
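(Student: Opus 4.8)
The plan is to prove a pointwise inequality $\mathcal{Q}_t f(y) \ge C\,\mathcal{M}_{t^{1/d_W}} f(y)$ holding for every $y$ and every $t>0$, and then integrate against $g \ge 0$. The entire content lies in the lower bound for the heat kernel on the on-diagonal scale $d(x,y) \le t^{1/d_W}$, which is precisely the estimate already extracted at the start of the proof of Theorem~\ref{Besov characterization}.

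First I would recall from the sub-Gaussian lower bound in~\eqref{eq:subGauss-upper} that whenever $d(x,y) \le t^{1/d_W}$ one has $d(x,y)^{d_W}/t \le 1$, so the exponential factor is bounded below by $e^{-c_6}$ and therefore
\[
p_t(x,y) \ge c_5 e^{-c_6}\, t^{-d_H/d_W} \qquad \text{for } d(x,y) \le t^{1/d_W}.
\]
Since the integrand $p_t(x,y)|f(x)-f(y)|$ is nonnegative, I would discard the contribution from outside the ball and then insert this lower bound:
\[
\mathcal{Q}_t f(y)
= \frac{1}{t^{1-\frac{\kappa}{d_W}}} \int_X p_t(x,y)|f(x)-f(y)|\,d\mu(x)
\ge \frac{c_5 e^{-c_6}}{t^{1-\frac{\kappa}{d_W}}\, t^{d_H/d_W}} \int_{B(y,t^{1/d_W})} |f(x)-f(y)|\,d\mu(x).
\]

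The only thing to check is that the powers of $t$ match those appearing in $\mathcal{M}_{t^{1/d_W}} f$. With $r = t^{1/d_W}$ the normalizing exponent of $\mathcal{M}_r f$ is $r^{d_H+d_W-\kappa} = t^{(d_H+d_W-\kappa)/d_W}$, which is exactly $t^{1-\kappa/d_W}\cdot t^{d_H/d_W}$; hence the displayed right-hand side equals $c_5 e^{-c_6}\,\mathcal{M}_{t^{1/d_W}} f(y)$. Multiplying the resulting pointwise inequality by $g(y) \ge 0$ and integrating in $y$ gives the claim with $C = c_5 e^{-c_6}$. There is no genuine obstacle here: the statement is purely a consequence of the Gaussian-type lower bound at the natural parabolic scale $r = t^{1/d_W}$, and the slightly delicate point is merely the bookkeeping of exponents, which I have verified above.
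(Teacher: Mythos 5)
Your proof is correct and is essentially identical to the paper's: both restrict the inner integral to $B(y,t^{1/d_W})$, apply the on-diagonal sub-Gaussian lower bound $p_t(x,y)\ge c\, t^{-d_H/d_W}$ there, and check that $t^{1-\kappa/d_W}\cdot t^{d_H/d_W}$ matches the normalization $r^{d_H+d_W-\kappa}$ with $r=t^{1/d_W}$. The only cosmetic difference is that you state the pointwise inequality $\mathcal{Q}_t f(y)\ge C\,\mathcal{M}_{t^{1/d_W}}f(y)$ before integrating against $g$, whereas the paper carries $g$ through the chain of inequalities directly.
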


\begin{proof}
For any $t>0$, by the sub-Gaussian heat kernel lower bound we have $p_t(x,y)\geq C t^{-d_H/d_W}$ on $B(y,t^{1/d_W})$, so
\begin{align*}
 \frac1{t^{1-\frac{\kappa}{d_W}}} &\int_X \int_X |f(x)-f(y) | p_t (x,y) d\mu(x) g(y) d\mu(y) \\
 \ge & \frac1{t^{1-\frac{\kappa}{d_W}}} \int_X \int_{B(y,t^{1/d_W})} |f(x)-f(y) | p_t (x,y) d\mu(x) g(y) d\mu(y) \\
 \ge & \frac C{t^{\frac{d_{H}}{d_{W}}+1-\frac{\kappa}{d_W}}} \int_X \int_{B(y,t^{1/d_W})} |f(x)-f(y) | d\mu(x) g(y)d\mu(y)
 = C \int_X g(y) \mathcal{M}_{t^{1/d_W}}f (y) d\mu (y).\qedhere
\end{align*}
\end{proof}

\begin{lemma}\label{estimate BV measure 4}
There exists a constant $C>0$ such that for every $f \in BV(X)\cap L^\infty(X,\mu)$ and $ g \in \mathcal{D},g\ge 0$
\[
\liminf_{t\to 0^+} \int_X g(y) \mathcal{Q}_tf (y) d\mu (y) \le C \liminf_{r\to 0^+} \int_X g(y) \mathcal{M}_r f (y) d\mu (y) .
\]
\end{lemma}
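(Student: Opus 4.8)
The plan is to insert the weight $g(y)$ into the self-improving inequality that drives the proof of Lemma~\ref{comparison var}, and then to absorb the resulting tail using Lemma~\ref{Lemma 3:BCD}. Set $\Psi_g(t):=\int_X g(y)\,\mathcal{Q}_t f(y)\,d\mu(y)$. Since every quantity in the statement depends on $f$ only through $|f(x)-f(y)|$, and since $|f(x)-f(y)|=|f_+(x)-f_+(y)|+|f_-(x)-f_-(y)|$, we obtain the exact splittings $\mathcal{Q}_tf=\mathcal{Q}_tf_++\mathcal{Q}_tf_-$ and $\mathcal{M}_rf=\mathcal{M}_rf_++\mathcal{M}_rf_-$, with $f_\pm\in BV(X)\cap L^\infty(X,\mu)$ nonnegative (their membership in $BV(X)$ following from Theorem~\ref{coarea formula 1a}). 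Applying Lemma~\ref{Lemma 3:BCD} to $f_+$ and $f_-$ separately and recombining via subadditivity of $\limsup$ and superadditivity of $\liminf$ upgrades its conclusion to $\limsup_{t\to0^+}\Psi_g\le C_0\liminf_{t\to0^+}\Psi_g$ for every $f\in BV(X)\cap L^\infty(X,\mu)$, with no sign restriction; this is the only place the hypotheses of Lemma~\ref{Lemma 3:BCD} enter.

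First I would fix $\delta>0$, set $r=\delta t^{1/d_W}$, and split the inner integral defining $\mathcal{Q}_t f(y)$ over $B(y,r)$ and its complement, exactly as in the proof of Lemma~\ref{comparison var} but carrying the factor $g(y)$ (which depends only on the base point $y$) through the $y$-integration. On $B(y,r)$ the sub-Gaussian upper bound in~\eqref{eq:subGauss-upper} gives $p_t(x,y)\le Ct^{-d_H/d_W}$, and a check that the powers of $t$ cancel shows this contribution is at most $\Phi_g(t):=C\delta^{d_H+d_W-\kappa}\int_X g(y)\,\mathcal{M}_{\delta t^{1/d_W}}f(y)\,d\mu(y)$. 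On the complement I would use the bound $p_t(x,y)\le C\exp(-c'\delta^{d_W/(d_W-1)})\,p_{ct}(x,y)$ to control the contribution by $A\,\Psi_g(ct)$, where $A=A(\delta)\to0$ as $\delta\to\infty$. This produces the recursion
\[
\Psi_g(t)\le\Phi_g(t)+A\,\Psi_g(ct).
\]

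Next I would take $\liminf_{t\to0^+}$, using $\liminf(a+b)\le\liminf a+\limsup b$ and $\limsup_{t\to0^+}\Psi_g(ct)=\limsup_{t\to0^+}\Psi_g(t)$, together with the upgraded form of Lemma~\ref{Lemma 3:BCD}, to obtain
\[
\liminf_{t\to0^+}\Psi_g\le\liminf_{t\to0^+}\Phi_g+A C_0\liminf_{t\to0^+}\Psi_g.
\]
The finiteness of $\liminf_{t\to0^+}\Psi_g$ needed to absorb the last term follows from $\Psi_g(t)\le\|g\|_{L^\infty}\|f\|_{1,1-\kappa/d_W}$ and Theorem~\ref{bounded embedding}. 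Choosing $\delta$ so large that $AC_0<1$ and absorbing yields $\liminf_{t\to0^+}\Psi_g\le C\liminf_{t\to0^+}\Phi_g$, and the substitution $r=\delta t^{1/d_W}$ (a bijection of $(0,\infty)$ onto itself as $t$ ranges over $(0,\infty)$) identifies $\liminf_{t\to0^+}\Phi_g=C'\liminf_{r\to0^+}\int_X g\,\mathcal{M}_r f\,d\mu$, which is the assertion.

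The hard part is conceptual rather than computational. The far-field part of the heat-kernel integral cannot be estimated by $\mathcal{M}_r f$ at any single scale, and a naive dyadic-annulus estimate only returns $\limsup_{r\to0^+}\int g\,\mathcal{M}_r f$, not its $\liminf$. The key device is to retain the far field as a copy of $\Psi_g$ at the comparable time $ct$ and then absorb it, which is legitimate precisely because $A$ can be made small while $C_0$ stays fixed, and because Lemma~\ref{Lemma 3:BCD} already trades $\limsup\Psi_g$ for $\liminf\Psi_g$. The technical points to verify are the exact cancellation of the powers of $t$ that reproduces the normalization of $\mathcal{M}_{\delta t^{1/d_W}}f$ inside $\Phi_g$, and that both $A$ and $C_0$ are independent of $t$ so that the absorption step is valid.
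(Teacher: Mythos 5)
Your proof is correct and follows essentially the same route as the paper's: the paper's own argument is exactly the near/far decomposition at $r=\delta t^{1/d_W}$ with the weight $g$ carried through, yielding $\Psi_g(t)\le C\delta^{d_W+d_H-\kappa}\int_X g\,\mathcal{M}_{\delta t^{1/d_W}}f\,d\mu+e^{-c\delta^{d_W/(d_W-1)}}\Psi_g(ct)$, followed by the same absorption scheme as in Lemma~\ref{comparison var}. Your additions --- making the absorption step explicit via Lemma~\ref{Lemma 3:BCD} (playing the role that Lemma~\ref{Lemma 2:BV embed} plays in the unweighted case), using Theorem~\ref{bounded embedding} for finiteness, and splitting $f=f_+-f_-$ through the exact identity $|f(x)-f(y)|=|f_+(x)-f_+(y)|+|f_-(x)-f_-(y)|$ to remove the sign restriction (where the simpler observation $|f_\pm(x)-f_\pm(y)|\le|f(x)-f(y)|$ already gives $f_\pm\in BV(X)$, without invoking Theorem~\ref{coarea formula 1a}) --- are faithful elaborations of what the paper leaves implicit, not a different method.
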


\begin{proof}
The proof is similar to that of Lemma~\ref{comparison var}. Let
\begin{equation*}
	\Psi_g(t) = \frac{1}{ t^{1-\frac{\kappa}{d_W}}}\int_X\int_X g(y) p_{t}(x,y) |f(x)-f(y)| \,d\mu(x)\,d\mu(y) 
	\end{equation*}
and estimate by dividing the region of integration and using the sub-Gaussian bounds to obtain, as was done in~\eqref{eq:ineqPhiandPsi},
\begin{equation*}
	\Psi_g(t) \leq C \delta^{d_W+d_H-\kappa} \int_X g(y) \mathcal{M}_{\delta t^{1/d_W}} f (y) d\mu(y)+ \exp\Bigl(-c \delta^{\frac{d_W}{d_{W}-1}}\Bigr) \Psi_g(ct).
	\end{equation*}
The proof can then be completed in the same manner as Lemma~\ref{comparison var}.
\end{proof}

We are finally in position to prove Theorem \ref{equivalence BV}.

\begin{proof}[Proof of Theorem \ref{equivalence BV}]
Without loss of generality, we can assume $f \ge 0$ a.e. From Lemma \ref{Lemma 3:BCD},
\[
\limsup_{t\to 0^+} \int_X g(y) \mathcal{Q}_tf (y) d\mu (y) \le C \liminf_{t\to 0^+} \int_X g(y) \mathcal{Q}_tf (y) d\mu (y).
\]
Then, from Lemma \ref{estimate BV measure 4},
\[
\liminf_{t\to 0^+} \int_X g(y) \mathcal{Q}_tf (y) d\mu (y) \le C \liminf_{r\to 0^+} \int_X g(y) \mathcal{M}_r f (y) d\mu (y) .
\]
Finally, Lemma \ref{estimate BV measure 1} implies
\[
\limsup_{r\to 0^+} \int_X g(y) \mathcal{M}_r f (y) d\mu (y) \le C\limsup_{t\to 0^+} \int_X g(y) \mathcal{Q}_tf (y) d\mu (y).
\]
\end{proof}

\subsection{Lower estimates and the relation between BV and energy measures}\label{subsec-lower}

In this section, we compare the BV measures with energy measures for functions which are both in $BV(X)$ and the domain $\mathcal{F}$ of the form. For this, we recall the notion of $\liminf$ measure used by M. Sion in~\cite{Sion64}.

\begin{definition}[\cite{Sion64}]
Let $(\mu_r)_{r >0}$ be a family of Radon measures on $X$. The $\liminf$ measure $\underline{\mu}$ of the family $(\mu_r)_{r >0}$ is defined as
\begin{equation}\label{e-lower-env}
\underline{\mu} (A)= \inf_{U \text{ open}, A \subset U} \sup_{K \text{ compact}, K \subset U } \underline{\mu}^* (K),
\end{equation} 
where 
\begin{equation*}
\underline{\mu}^* (K)=\inf \left\{ \sum_i \liminf_{r \to 0^+} \mu_r (U_i): U_i \text{ open}, K \subset \bigcup_i U_i \right\}.
\end{equation*}
It satisfies the following property: If $(\nu_r)_{r>0}$ is a family of Radon measures such that 
\begin{itemize}
\item $\nu_r \le \mu_r$;
\item $\nu_r$ vaguely converges to some Radon measure $\nu$.
\end{itemize}
Then $\nu \le \underline{\mu}$.
\end{definition}

We now recall that since $\mathcal{E}$ is assumed to be regular, for every $f\in \mathcal F\cap L^{\infty}(X,\mu)$, one can define the energy measure $\nu_{f}$ in the sense of~\cite{Beurling-Deny} through the formula
\[
\int_X \phi d\nu_{f} = \mathcal{E}(f\phi,f)-\frac12 \mathcal{E}(\phi,f^2), \quad \phi\in \mathcal F \cap C_c(X).
\]
Then $\nu_{f}$ can be extended to all $f\in \mathcal F$ by truncation. 
\begin{theorem}\label{thm-bv-kappa}
If $f \in BV(X) \cap L^2(X,\mu)$ is H\"older continuous with exponent $\kappa$, then $f \in \mathcal{F}$ and its energy measure $\nu_f$ is absolutely continuous with respect to any BV measure $\gamma_f$. Moreover, there exists a constant $C>0$ independent from $f$ such that
\[
\nu_f \le C \| f \|_{\infty,\kappa} \underline{\gamma}_f ,
\]
where $\| f \|_{\infty,\kappa}$ denotes the $\kappa$-H\"older seminorm of $f$ and $\underline{\gamma}_f$ the $\liminf$ measure of the family $\mathcal{M}_r f d\mu$, $r >0$. 
\end{theorem}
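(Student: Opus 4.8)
The plan is to prove the three assertions in turn: that $f\in\mathcal F$, that $\nu_f$ is dominated up to the constant $C\|f\|_{\infty,\kappa}$ by the lower-envelope measure $\underline{\gamma}_f$, and finally that this yields absolute continuity with respect to every BV measure $\gamma_f$. Throughout I would use the approximating forms $\mathcal{E}_\tau(u,v)=\frac1{2\tau}\int_X\int_X p_\tau(x,y)(u(x)-u(y))(v(x)-v(y))\,d\mu(x)\,d\mu(y)$ together with the elementary pointwise consequence of Hölder continuity $(f(x)-f(y))^2\le\|f\|_{\infty,\kappa}\,d(x,y)^\kappa|f(x)-f(y)|$, which is the device that converts the quadratic energy integrand into the linear integrand appearing in $\mathcal{M}_r f$ and $\mathcal{Q}_t f$.

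First, to see $f\in\mathcal F$, I would bound $\mathcal{E}_\tau(f,f)$ uniformly in $\tau$. Inserting the Hölder inequality above and then the heat-kernel comparison \eqref{eq:boundpolyptbypct} gives $\mathcal{E}_\tau(f,f)\le C\|f\|_{\infty,\kappa}\,\tau^{-(1-\kappa/d_W)}\int_X\int_X p_{c\tau}(x,y)|f(x)-f(y)|\,d\mu(x)\,d\mu(y)\le C\|f\|_{\infty,\kappa}\,\|f\|_{1,1-\frac{\kappa}{d_W}}$, where the last step is the definition of the Besov seminorm (recall $\alpha_1^\#=1-\kappa/d_W$ by Assumption~\ref{assumpt:wBEatcrit}). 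Since $f\in BV(X)$, Theorem~\ref{bounded embedding} bounds $\|f\|_{1,1-\frac{\kappa}{d_W}}$ by $C\mathbf{Var}(f)<\infty$; as $\mathcal{E}(f,f)=\lim_{\tau\to0^+}\mathcal{E}_\tau(f,f)$ is an increasing limit and $f\in L^2(X,\mu)$, this shows $f\in\mathcal F$ with $\mathcal{E}(f,f)\le C\|f\|_{\infty,\kappa}\mathbf{Var}(f)$.

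Next I would represent the energy measure through the semigroup. Expanding $\mathcal{E}_\tau(f\phi,f)-\tfrac12\mathcal{E}_\tau(\phi,f^2)$ and using the symmetry of $p_\tau$, the cross terms cancel and one is left with $\mathcal{E}_\tau(f\phi,f)-\tfrac12\mathcal{E}_\tau(\phi,f^2)=\frac1{2\tau}\int_X\int_X p_\tau(x,y)\phi(y)(f(x)-f(y))^2\,d\mu(x)\,d\mu(y)$; letting $\tau\to0^+$ and using $\mathcal{E}_\tau\to\mathcal{E}$ on $\mathcal F$ identifies the limit with $\int_X\phi\,d\nu_f$ for $\phi\in\mathcal F\cap C_c(X)$, $\phi\ge0$. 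Applying the same Hölder bound and \eqref{eq:boundpolyptbypct} to this integrand, now carrying the weight $\phi(y)$, yields $\int_X\phi\,d\nu_f\le C\|f\|_{\infty,\kappa}\liminf_{t\to0^+}\int_X\phi\,\mathcal{Q}_t f\,d\mu$. By Lemma~\ref{estimate BV measure 4} the right-hand $\mathcal{Q}$-functional is controlled by $C\liminf_{r\to0^+}\int_X\phi\,\mathcal{M}_r f\,d\mu$ (at least for $\phi\in\mathcal D$, and then for general nonnegative $\phi\in C_c(X)$ by the sup-norm density of $\mathcal D$ in $C_0(X)$ together with monotone approximation). This is exactly the functional that the Sion liminf measure $\underline{\gamma}_f$ of the family $\mathcal{M}_r f\,d\mu$ is built to capture through \eqref{e-lower-env}, so I obtain $\int_X\phi\,d\nu_f\le C\|f\|_{\infty,\kappa}\int_X\phi\,d\underline{\gamma}_f$, that is $\nu_f\le C\|f\|_{\infty,\kappa}\underline{\gamma}_f$.

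Finally, absolute continuity should follow because $\underline{\gamma}_f$ is equivalent to every individual BV measure: Theorem~\ref{equivalence BV} shows all BV measures are mutually equivalent with uniformly bounded densities, and the lower envelope $\underline{\gamma}_f$ of a uniformly comparable family is itself comparable to each member, so $\underline{\gamma}_f\ll\gamma_f$ and hence $\nu_f\ll\gamma_f$. I expect the main obstacle to be the passage in the previous paragraph from the heat-semigroup functional $\liminf_{t\to0^+}\int_X\phi\,\mathcal{Q}_t f\,d\mu$ to the genuine lower-envelope measure $\underline{\gamma}_f$: one must (i) justify the energy-measure representation and the limit exchange for a possibly unbounded $f$, which I would handle via the local boundedness coming from Hölder continuity and truncation, using that $\phi$ has compact support; (ii) upgrade Lemma~\ref{estimate BV measure 4} from the dense class $\mathcal D$ to all nonnegative $\phi\in C_c(X)$ while preserving the $\liminf$ inequality; and (iii) match $\liminf_{r\to0^+}\int_X\phi\,\mathcal{M}_r f\,d\mu$ with $\int_X\phi\,d\underline{\gamma}_f$ using the defining property of Sion's measure. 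These are the delicate measure-theoretic points; the analytic heart, namely converting the quadratic integrand to a linear one via Hölder continuity and \eqref{eq:boundpolyptbypct}, is routine given the earlier lemmas.
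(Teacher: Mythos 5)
Your proposal is correct and follows essentially the same route as the paper: the analytic core is identical (the pointwise bound $|f(x)-f(y)|^2\le \|f\|_{\infty,\kappa}\,d(x,y)^\kappa|f(x)-f(y)|$ combined with~\eqref{eq:boundpolyptbypct} to dominate the quadratic energy integrand by $C\|f\|_{\infty,\kappa}\,\mathcal{Q}_{Ct}f$, then Lemma~\ref{estimate BV measure 4}/Theorem~\ref{equivalence BV} and the defining comparison property of Sion's $\liminf$ measure to land on $\underline{\gamma}_f$ and deduce $\nu_f\ll\gamma_f$). The extra material you include --- the uniform bound on $\mathcal{E}_\tau(f,f)$ giving $f\in\mathcal{F}$, and the derivation via $\mathcal{E}_\tau(f\phi,f)-\tfrac12\mathcal{E}_\tau(\phi,f^2)$ of the vague convergence of $\frac{1}{2t}|f(x)-f(y)|^2p_t(x,y)\,d\mu(x)\,d\mu(y)$ to $\nu_f$ --- simply fills in steps the paper asserts without proof, and your flagged delicate points (truncation for unbounded $f$, upgrading from $\mathcal{D}$ to $C_c(X)$) are exactly the right ones.
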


\begin{proof}
Suppose that $f \in BV(X)$ is H\"older continuous with exponent $\kappa$.
Then by~\eqref{eq:boundpolyptbypct}, for $g \in C_0(X)$,
\begin{align*}
 \lefteqn{\frac1t\int_{X}\int_X g(y) | f(x) -f(y)|^2 p_t (x,y) \,d\mu(x)d\mu(y)}\quad& \\
 \le & \frac Ct \| f \|_{\infty,\kappa} \int_{X}\int_X g(y) d(x,y)^\kappa | f(x) -f(y)| p_t (x,y) \,d\mu(x)d\mu(y) \\
 \le &  \| f \|_{\infty,\kappa} \frac C{t^{1-\kappa/d_{W}}} \int_X \int_X g(y) p_{Ct} (x,y) |f(x)-f(y)| \,d\mu(x) d\mu(y).
\end{align*}
The fact that $\frac{1}{2t} | f(x) -f(y)|^2 p_t (x,y) \,d\mu(x)$ converges vaguely to $\nu_f$ as $t\to0^+$ shows $\nu_f$ can be dominated by the lim inf measure of $\mathcal{Q}_t f d\mu$, and thus, applying~Theorem \ref{equivalence BV}, by  $\underline{\gamma}_f$.  In particular, we recover $\DF(f,f)\leq C \| f \|_{\infty,\kappa} \mathbf{Var}(f)$.
\end{proof}

\section{Examples}\label{S:Examples}
We conclude the paper with some results and conjectures about the BV class for two explicit classes  of examples, the nested fractals 
\cite{Lind,FitzsimmonsHamblyKumagai} and generalized Sierpinski carpets \cite{BB92,BaASC,BBKT,BB99,BB89}.  
\subsection{Fractional spaces}

Nested fractals represent an important class of examples of finitely ramified, self-similar, fractional metric spaces that support fractional diffusions in the sense of Barlow (\cite[Definition 3.2]{Ba98},  and see also Section~\ref{ssec:Barlowspaces}).  We omit the technical definition for the sake of brevity, but note that two standard  nested fractals that   exemplify the behavior seen in this class are the Vicsek set and the Sierpinski gasket shown in 
Figures~\ref{fig-Vicsek} and~\ref{fig-SG}. 

Our BV theory applies to these examples in its entirety, i.e.\ Assumption~\ref{assumpt:wBEatcrit} is satisfied.  In particular, we have a weak Bakry-\'Emery inequality $wBE(\kappa)$ with $\kappa=d_W-d_H>0$ by virtue of Theorem~\ref{thm-xBE-FD}.  According to Theorem~\ref{estimate kappa alpha}  we then have $\alpha_1^*\leq \beta_1=1-\kappa/d_W=d_H/d_W$.  However the converse is established by the following theorem.

\begin{theorem}\label{T:nested}
If $X$ is a nested fractal, then $\B^{1,d_H/d_W}(X)$ contains all indicator functions of cells with finite boundary, so is dense in  $L^1(X,\mu)$. Hence $\alpha_1^* = d_H/d_W$.
\end{theorem}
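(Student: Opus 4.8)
The upper bound $\alpha_1^*\le d_H/d_W$ is already recorded in the discussion preceding the statement: by Theorem~\ref{thm-xBE-FD} a nested fractal satisfies $wBE(d_W-d_H)$, so Theorem~\ref{estimate kappa alpha} gives $\alpha_1^*\le\beta_1=d_H/d_W$. The plan is therefore to prove the reverse inequality by producing a dense-in-$L^1$ family of functions lying in $\B^{1,d_H/d_W}(X)$, namely indicator functions of cells.

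First I would verify that $\mathbf{1}_E\in\B^{1,d_H/d_W}(X)$ for any cell $E$. By Lemma~\ref{lem:charfunctinBesov} with $\alpha=d_H/d_W$ it suffices to show
\[
\limsup_{r\to0^+} r^{-d_H}\,\mu(\partial^*_rE)<\infty.
\]
Let $\{p_1,\dots,p_k\}$ be the finite boundary of $E$. The decisive geometric input is the finite ramification of nested fractals together with the fact that they are among Barlow's fractional metric spaces and hence geodesic: since $\overline{E}\cap\overline{E^c}=\{p_1,\dots,p_k\}$, any path from a point of $E$ to a point of $E^c$ must pass through some $p_i$, so for $y\in E$ one has $d(y,E^c)\ge\min_i d(y,p_i)$, and symmetrically for $y\in E^c$. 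From this I would conclude
\[
\partial^*_rE\subseteq\bigcup_{i=1}^k B(p_i,r),
\]
because $y\in E^*\cap(E^c)_r$ forces $\mu(B(y,r)\cap E^c)>0$, hence $d(y,E^c)<r$ and $d(y,p_i)<r$ for some $i$; the other half of $\partial^*_rE$ is identical. Ahlfors $d_H$-regularity (Assumption~\ref{A1}) then gives $\mu(\partial^*_rE)\le\sum_{i=1}^k\mu(B(p_i,r))\le k\,c_2\,r^{d_H}$, so $r^{-d_H}\mu(\partial^*_rE)\le kc_2$ is bounded uniformly in $r$, as needed.

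Next I would deduce density. Since the seminorm $\|\cdot\|_{1,d_H/d_W}$ is subadditive, $\B^{1,d_H/d_W}(X)$ is a vector space and therefore contains every finite linear combination of cell indicators, i.e.\ every function that is constant on the cells of some fixed level. Such functions are dense in $L^1(X,\mu)$: the level-$m$ cells partition $X$ up to a $\mu$-null set, generate the Borel $\sigma$-algebra, and have diameters tending to $0$, so the corresponding conditional expectations converge to the identity in $L^1$ by martingale convergence. Hence $\B^{1,d_H/d_W}(X)$ is dense in $L^1(X,\mu)$, which yields $\alpha_1^*\ge d_H/d_W$ and, combined with the upper bound, the claimed equality.

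The hard part is the geometric estimate $\partial^*_rE\subseteq\bigcup_i B(p_i,r)$; the remaining steps are soft. This is precisely where the structure of nested fractals enters, through finite ramification and the geodesic separation of cells, and it is what confines the measure-theoretic $r$-neighborhood of the boundary to balls of radius $O(r)$ about the finitely many points $p_i$, producing exactly the exponent $d_H$ that matches the critical value $\alpha=d_H/d_W$.
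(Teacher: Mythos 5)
Your proposal is correct and follows essentially the same route as the paper: both verify via Lemma~\ref{lem:charfunctinBesov} that $\mu(\partial^*_rE)\le Cr^{d_H}$ because $\partial^*_rE$ lies in $r$-neighborhoods of the finitely many boundary points of a cell (Ahlfors regularity supplying the exponent $d_H$), then deduce density of cell indicators in $L^1$ and combine with the upper bound $\alpha_1^*\le\beta_1=d_H/d_W$ from Theorems~\ref{thm-xBE-FD} and~\ref{estimate kappa alpha}. You merely spell out details the paper leaves implicit, namely the geodesic/finite-ramification justification of $\partial^*_rE\subseteq\bigcup_i B(p_i,r)$ and the martingale-convergence density argument.
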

\begin{proof}
This result follows easily from Lemma~\ref{lem:charfunctinBesov} because $\partial^*_rE$ consists of $r$-neighborhoods of the finite number of boundary points so $\mu(\partial^*_rE)\leq Cr^{d_H}$ for some $C$ depending on the set. With $\alpha=d_H/d_W$ it is then clear that $r^{-\alpha/d_W}\mu(\partial^*_rE)$ is bounded.
\end{proof}
 
 We have thus established that $\kappa=d_W(1-\alpha_1^*)$, so we are in the setting of Section~\ref{S:BVfunctions}, with $BV(X)=\B^{1,d_H/d_W}(X)$ and
 \begin{equation*}
 	\mathbf{Var}(f) = \liminf_{r \to 0^+} \frac1{r^{d_H}} \int_X\frac1{\mu(B(x,r))} \int_{B(x,r)} |f(x)-f(y)|\,d\mu(y).
 	\end{equation*} 
 BV functions in these examples therefore have all of basic  properties seen in this paper: locality of BV norms, co-area estimate, control of perimeter measures by lower Minkowski content, Sobolev inequalities, and BV measures.  A detailed presentation of the theory in this setting will be given in~\cite{ABCRST4}, to which we refer the reader for the proof of the following result about piecewise harmonic functions that gives an indication of how different the BV theory on fractals may be to that on Euclidean spaces.  Piecewise harmonic functions are functions that are continuous and harmonic except at a finite set of points.
  They are the analogue of piecewise linear functions on the real line.
 
 \begin{theorem}[See \cite{ABCRST4}] \label{T:Vicsek-SG}
 \
 
\begin{enumerate}
\item \label{T:Vicsek-SG1}
On the Vicsek set any  compactly supported piecewise harmonic  function is in BV, and its BV measures are equivalent to its energy measure. 
\item \label{T:Vicsek-SG2}
On the Sierpinski gasket any non-constant piecewise harmonic function is not in BV. 
\end{enumerate}\end{theorem}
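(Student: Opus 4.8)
The plan is to reduce both statements to the behavior, as $n\to\infty$, of the sum of oscillations of $f$ over the generation-$n$ cells. Write $C_w$ for the cell indexed by a word $w$ of length $n$, so that $\mu(C_w)\simeq r^{d_H}$ and $\operatorname{diam}C_w\simeq r$ with $r\simeq 3^{-n}$ on the Vicsek set and $r\simeq 2^{-n}$ on the gasket, and set
\[
S_n(f):=\sum_{|w|=n}\operatorname{osc}_{C_w}(f).
\]
I would first establish that $\mathbf{Var}(f)$ is controlled two-sidedly by $\liminf_{n}S_n(f)$. The upper bound comes from Theorem~\ref{bounded embedding} together with the Besov characterization, estimating $\int_X\mathcal{M}_rf\,d\mu$ along the natural scale $r\simeq\operatorname{diam}C_w$ by grouping the double integral over pairs lying in a common or neighboring cell and using $\mu(C_w)\simeq r^{d_H}$ with $d_H+d_W-\kappa=2d_H$. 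The matching lower bound, valid at every scale $r\simeq\operatorname{diam}C_w$, uses that a non-constant harmonic function is spread out on each cell, so $\int_{C_w}\int_{C_w}|f(x)-f(y)|\,d\mu(x)\,d\mu(y)\gtrsim\mu(C_w)^2\operatorname{osc}_{C_w}(f)$. The entire problem then becomes whether $S_n(f)$ stays bounded or diverges.

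For the Vicsek set the dendrite structure forces $S_n(f)$ to stay bounded. A piecewise harmonic $f$ is monotone along each branch of the tree, hence crosses each level finitely often, so every superlevel set $E_t(f)$ has finite measure-theoretic boundary and therefore, by the lower Minkowski content bound of Theorem~\ref{Minkowski} (here $d_W-\kappa=d_H$ and an $\ell$-point boundary has $\mu(\partial^*_rE)\simeq\ell\, r^{d_H}$), finite perimeter, with $P(E_t(f))$ uniformly bounded and vanishing outside the bounded range of $f$. The converse part of the co-area formula, Theorem~\ref{coarea formula 1a}, then yields $f\in BV(X)$. For the equivalence of measures I would use that harmonic functions on the Vicsek set are $\kappa$-Hölder with $\kappa=d_W-d_H=1$, so Theorem~\ref{thm-bv-kappa} gives $\nu_f\le C\,\underline{\gamma}_f$; the reverse domination $\gamma_f\lesssim\nu_f$ follows from the explicit self-similar computation, in which on each cell where $f$ is non-constant the harmonic gradient is bounded above and below, making $\gamma_f(C_w)$ and $\nu_f(C_w)$ comparable on the common support, which together with Theorem~\ref{equivalence BV} gives mutual equivalence.

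For the Sierpinski gasket the point is exactly the opposite: $S_n(f)\to\infty$ for every non-constant piecewise harmonic $f$, so $\mathbf{Var}(f)=+\infty$ and $f\notin BV(X)$. Here $\operatorname{osc}_{C_w}(f)\simeq\|A_{w_n}\cdots A_{w_1}v\|$, where $A_0,A_1,A_2$ are the harmonic restriction matrices acting on the two-dimensional space of boundary increments (values at the three corners modulo constants) and $v\ne0$ is the increment vector of $f$. On this space each $A_i$ has eigenvalues $\tfrac35$ and $\tfrac15$, and since there are $3^n$ cells with $2^{d_H}=3$, the dichotomy is whether $3\,e^{\gamma}>1$, where $\gamma$ is the top Lyapunov exponent of the cocycle generated by $\{A_0,A_1,A_2\}$. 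This is where the Furstenberg-Kesten theory of products of i.i.d.\ random matrices enters: by the determinant identity the two Lyapunov exponents sum to $\log(\det|_{\mathrm{incr}})=\log\tfrac{3}{25}$, so $\gamma\ge\tfrac12\log\tfrac{3}{25}=\log\tfrac{\sqrt3}{5}>-\log3$ (heuristically $\gamma\approx\log\tfrac35$), whence $3^n e^{n\gamma}\to\infty$ and $S_n(f)\to\infty$.

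The main obstacle is upgrading this to a bound uniform over all non-constant $f$, i.e.\ ruling out cancellation for exceptional boundary data. This is where irreducibility of the family $A_0,A_1,A_2$ on the increment space (no common invariant subspace) and Furstenberg's positivity theorem are decisive: they guarantee that $\tfrac1n\log\|A_{w_n}\cdots A_{w_1}v\|\to\gamma$ for a.e.\ word $w$ and \emph{every} fixed $v\ne0$, with no exceptional slow direction, so the lower bound on $S_n(f)$ holds for each non-constant harmonic piece (the finite singular set of a piecewise harmonic function does not affect the leading growth). One must also match $\operatorname{osc}_{C_w}(f)$ to $\|A_{w_n}\cdots A_{w_1}v\|$ with two-sided bounds and verify the cell-wise lower bound on the double integral. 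These delicate steps, together with the non-commutative ergodic input, are carried out in~\cite{ABCRST4}.
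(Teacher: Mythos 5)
The paper itself contains no proof of this theorem: it is stated with the citation to~\cite{ABCRST4}, and the surrounding text explicitly refers the reader there, so there is no in-paper argument to compare against. Your outline is, however, consistent with the strategy the paper advertises --- the introduction attributes the gasket part precisely to Furstenberg--Kesten theory and Lyapunov exponents for products of i.i.d.\ matrices, which is your route --- and your Vicsek argument correctly assembles the paper's own machinery: Theorem~\ref{Minkowski} with $d_W-\kappa=d_H$ (so an $\ell$-point boundary has $\mu(\partial^*_rE)\leq C\ell r^{d_H}$), the converse direction of Theorem~\ref{coarea formula 1a}, and Theorem~\ref{thm-bv-kappa} with $\kappa=1$. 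The determinant bound $\gamma\geq\frac12\log\frac{3}{25}$ and the check $3e^{\gamma}\geq 3\sqrt3/5>1$ are correct, and your open step --- the cell-wise bound $\int_{C_w}\int_{C_w}|f(x)-f(y)|\,d\mu\,d\mu \gtrsim \mu(C_w)^2\operatorname{osc}_{C_w}(f)$ --- closes cleanly by equivalence of norms on the two-dimensional space of harmonic increments, both sides being norms on that space after pulling back by the cell map.

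Two soft spots should be repaired. First, the probabilistic input is mislabeled: Furstenberg's positivity theorem is not what is needed here, since the top exponent $\gamma$ of the increment cocycle is \emph{negative}, so $\|A_{w_n}\cdots A_{w_1}v\|\to0$ almost surely and a naive Fatou argument on $S_n\simeq 3^n\,\mathbb{E}\|A_wv\|$ fails. What you actually need is the $L^1$ (Kingman) convergence $\frac1n\mathbb{E}\log\|A_wv\|\to\gamma$ for \emph{every} fixed $v\neq0$ --- which requires verifying strong irreducibility of $\{A_0,A_1,A_2\}$ on the increment space, not mere irreducibility --- followed by Jensen's inequality $\mathbb{E}\|A_wv\|\geq e^{\mathbb{E}\log\|A_wv\|}$ to get $S_n\gtrsim(3e^{\gamma})^ne^{o(n)}\to\infty$. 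Second, in the Vicsek part the reverse domination $\gamma_f\lesssim\nu_f$ is only gestured at; the precise statement is that along the finite spanning subtree of the singular points, $\gamma_f$ is comparable to $s\times(\text{length measure})$ while $\nu_f$ is comparable to $s^2\times(\text{length measure})$, where the slope $s$ in the resistance coordinate takes finitely many nonzero values on the support of variation --- this finiteness, which you should state explicitly, is what makes the two mutually absolutely continuous with bounded densities.
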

 
We believe that stronger results are true, namely that
\begin{conjecture}\label{conj-SG}\ 
\begin{enumerate}
\item On a nested fractal that is a dendrite, such as the Vicsek set, the BV space can be completely described using an analogue of Stieltjes integration along geodesics, and hence BV functions may be described as having classical distributional derivatives that are finite Radon measures.
\item On the Sierpinski gasket, and certain other nested fractals, any non-constant continuous function is of infinite variation.
\end{enumerate}
\end{conjecture}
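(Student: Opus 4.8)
I would treat the two parts separately, since they call for rather different tools. For part~(1), on a dendrite such as the Vicsek set, the plan is to exploit the tree structure: a dendrite carries a unique arc between any two points, so the space is organized entirely by its geodesics. First I would fix a root and introduce geodesic coordinates, recording each point by its position along the unique arc from the root together with the branching data at junction points. Recalling that on the Vicsek set $\kappa=1$ (so $d_W-d_H=1$ and $\alpha_1^\#=d_H/d_W$), one expects the critical Korevaar--Schoen seminorm to register only the behavior of $f$ along arcs. The central step is to establish an equivalence of the form
\[
\mathbf{Var}(f)\simeq \int V_\gamma(f)\,d\nu(\gamma),
\]
where $V_\gamma(f)$ denotes the classical one-dimensional total variation of $f$ restricted to the geodesic $\gamma$ and $\nu$ is a suitable measure on the space of maximal arcs. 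Using the self-similar cell decomposition, one reduces the double integral defining $\mathbf{Var}$ to contributions from pairs of points lying in adjacent cells; the dendrite property forces such pairs to communicate through the finite set of junction points, and the Lipschitz-type regularity (here $\kappa=1$) lets one convert the heat-kernel average into an honest difference quotient along the arc. Once this equivalence is in place, identifying $BV(X)$ with the functions of finite pointwise variation along geodesics — equivalently, those whose Stieltjes derivative along arcs is a finite Radon measure — follows from the classical one-dimensional theory. The main obstacle here is the oscillatory behavior of the heat kernel flagged in Remark~\ref{non uniqueness}: because $\mathcal{M}_r f\,d\mu$ need not converge as $r\to0^+$, one must work with the $\liminf$ measures of Section~\ref{subsec-lower} and show that the geodesic variation is recovered in the limit uniformly across the branching.

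For part~(2), on the Sierpinski gasket, the starting point is Theorem~\ref{T:Vicsek-SG}(\ref{T:Vicsek-SG2}), which already shows that non-constant piecewise harmonic functions fail to lie in $BV(X)$. The plan is to upgrade this from the piecewise harmonic class to all continuous functions by an approximation-and-lower-bound scheme. Given a continuous $f$, I would consider its harmonic splines $f_n$ (the harmonic interpolants matching $f$ at the level-$n$ vertices) and bound $\mathbf{Var}(f)$ from below by the oscillation of $f$ accumulated across the level-$n$ cells, using the locality property (Theorem~\ref{bounded embedding}) together with the co-area estimate (Theorem~\ref{coarea formula 1a}). The crucial analytic input is the scaling of harmonic extension across a single cell, which is encoded by products of the harmonic extension matrices along the address sequence of that cell. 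Here one invokes the Furstenberg--Kesten theory and the multiplicative ergodic theorem for i.i.d.\ products of these matrices: the relevant Lyapunov exponent controls how oscillation is distributed across scales, and a computation should place this rate on the divergent side of the critical threshold $d_W-d_H$, so that summing the cell contributions forces $\mathbf{Var}(f)=+\infty$ for every non-constant $f$.

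The hard part, and the reason the statement must remain conjectural here, is twofold. First, one must control a \emph{general} continuous $f$ — not merely a harmonic one — through its harmonic approximants uniformly across all scales, which is precisely where the sharp Besov-type estimates of the intrinsically smooth functions enter. Second, one needs the matrix-product estimate to hold for every non-constant function rather than only in an almost-sure or ergodic-average sense, since the assertion is that \emph{no} non-constant continuous function has finite variation. These delicate Lyapunov-exponent estimates are exactly what is deferred to~\cite{ABCRST4}.
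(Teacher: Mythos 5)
This statement is Conjecture~\ref{conj-SG}: the paper offers no proof of it, explicitly deferring the underlying analysis to~\cite{ABCRST4}, so there is no paper argument to compare yours against. Your proposal is best read as a research plan, and to your credit you say so; it is moreover consistent with the hints the paper does give (the one-dimensional/Stieltjes picture for dendrites, and Furstenberg--Kesten/Lyapunov-exponent methods behind Theorem~\ref{T:Vicsek-SG} for the gasket). What can be checked is whether the plan's individual steps are sound relative to the machinery actually established in the paper, and there are two concrete soft spots.

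For part~(2), your lower-bound scheme via harmonic splines $f_n$ points in the wrong logical direction. Theorem~\ref{T:Vicsek-SG}(\ref{T:Vicsek-SG2}) gives $\mathbf{Var}(f_n)=+\infty$ for every non-constant spline, but infinite variation of approximants does not transfer to the limit: by Theorem~\ref{bounded embedding}, $\mathbf{Var}$ is comparable to $\sup_{r>0}\int_X\mathcal{M}_r f\,d\mu$, a supremum of functionals continuous in $L^1$, hence (up to constants) \emph{lower} semicontinuous --- which bounds $\mathbf{Var}(f)$ \emph{above} by $\liminf_n \mathbf{Var}(f_n)$-type quantities, never below. Already on $\mathbb{R}$ a smooth function is an $L^1$ limit of functions of infinite variation. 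So the argument must instead extract, scale by scale, a quantitative lower bound on $\int_X\mathcal{M}_{r_n}f\,d\mu$ from $f$ itself (say via differences of cell averages), and here mere continuity of $f$ gives no modulus to convert vertex oscillations into mass for the double integral at scale $r_n$. You do correctly identify the second genuine obstruction --- Furstenberg--Kesten gives almost-sure growth along address sequences, whereas the conjecture quantifies over \emph{every} non-constant continuous $f$, requiring a deterministic or positive-proportion version of the Lyapunov estimate. For part~(1), the displayed equivalence $\mathbf{Var}(f)\simeq\int V_\gamma(f)\,d\nu(\gamma)$ is asserted rather than argued: since $\lambda_1^\#=d_H>1$ (Theorem~\ref{T:nested}), the seminorm normalizes by $r^{\lambda_1^\#}\mu(B(x,r))\simeq r^{2d_H}$, so one must balance the measure of pairs communicating through a junction against $r^{2d_H}$, and the oscillation phenomena flagged in Remark~\ref{non uniqueness} mean the $r\to0^+$ limit need not stabilize; also note $\kappa=1$ in $wBE(\kappa)$ is a regularity of the \emph{semigroup}, not of BV functions, which on the Vicsek set may jump, so ``Lipschitz-type regularity'' cannot be invoked for $f$. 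In short: a plausible program faithful to the paper's stated intentions, with the conjectural status honestly acknowledged, but the spline lower-bound step as written would fail and needs to be replaced by a direct multiscale estimate on $f$.
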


We make a brief comment about fractional metric spaces that support a fractional diffusion but are not nested fractals.  The most basic examples of such spaces are generalized Sierpinski carpets, for which 
there exist a unique Dirichlet form and diffusion process, see \cite{BBKT}, and 
there is a comprehensive description of properties of the heat operator due to Barlow and Bass~\cite{BB89,BB99}.   It is not difficult to use Lemma~\ref{lem:charfunctinBesov} to see that the critical exponent \begin{equation}\label{e-SC-a*}
\alpha_1^*\geq (d_H-d_{tH}+1)/d_W,
\end{equation} where $d_{tH}$ is the {topological-Hausdorff dimension} defined in~\cite{BBE15}. For the classical Sierpinski carpet $d_{tH}=\dfrac{\log 2}{\log 3}+1$ according to~\cite[Theorem 5.4]{BBE15}. However the Barlow-Bass theory only yields $wBE(\kappa)$ for $\kappa=d_W-d_H$, not for $\kappa= d_W-d_H + d_{tH}-1$.  We believe equality holds in \eqref{e-SC-a*} for $\alpha_1^*$ and post an open question about the   weak Bakry-\'Emery estimate at criticality. Note that,   if $1<d_S=2\frac{d_H}{d_W}<2$,   proving $wBE(\kappa)$ for $ \kappa > d_W-d_H $ would involve improving the H\"older continuity estimates for harmonic functions  in~\cite{BB89,BB99,Ba98}. 
Improved H\"older continuity estimates for harmonic functions on the classical two dimensional Sierpinski carpet are strongly supported by numerical calculations in \cite{REU}.
\begin{conjecture}\label{conj-SC}
We conjecture that for generalized Sierpinski carpets and similar fractals $$\alpha_1^*=(d_H-d_{tH}+1)/d_W$$ and the condition $wBE(\kappa)$ is valid 
for some $ \kappa > (d_W-d_H)_+   $. 
\end{conjecture}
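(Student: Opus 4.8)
The plan is to reduce the whole conjecture to a single sharp regularity statement, namely $wBE(\kappa_0)$ with
\[
\kappa_0 = d_W - d_H + d_{tH} - 1,
\]
and then to attack that statement by sharpening the Hölder continuity of harmonic functions on the carpet. First I would check that $\kappa_0$ is admissible, i.e.\ $0 < \kappa_0 < d_W$: positivity holds because $d_W > d_H$ and $d_{tH} > 1$ for the carpet, while $\kappa_0 < d_W$ follows from $d_{tH} \le d_H$. Granting $wBE(\kappa_0)$, both assertions of the conjecture follow at once. Indeed $\kappa_0 > d_W - d_H = (d_W - d_H)_+$ is the second assertion, and Theorem~\ref{estimate kappa alpha} gives $\alpha_1^\# \le \beta_1 = 1 - \kappa_0/d_W = (d_H - d_{tH} + 1)/d_W$; together with the lower bound $\alpha_1^* \ge (d_H - d_{tH} + 1)/d_W$ already obtained from Lemma~\ref{lem:charfunctinBesov} and the trivial inequality $\alpha_1^* \le \alpha_1^\#$, this forces $\alpha_1^* = \alpha_1^\# = (d_H - d_{tH} + 1)/d_W$.

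To prove $wBE(\kappa_0)$ I would follow the scheme of Theorem~\ref{thm-xBE-FD}, whose only carpet-specific ingredient is Barlow's resolvent estimate~\eqref{E:FD_resolvent_estimate}. The goal is to upgrade it, in a scale-consistent way, to
\[
|U_\lambda f(x) - U_\lambda f(y)| \le C \lambda^{-(d_H - d_{tH} + 1)/d_W} d(x,y)^{\kappa_0} \|f\|_{L^\infty(X,\mu)};
\]
note that this reduces exactly to~\eqref{E:FD_resolvent_estimate} when $d_{tH} = 1$, a reassuring consistency check, and that the prefactor is forced by the Einstein relation $\lambda \sim d(x,y)^{-d_W}$. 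Granting this, the identity $P_t f = U_\lambda(L-\lambda)P_t f$ together with the bound $\|(L-\lambda)P_t f\|_{L^\infty} \le (C/t + \lambda)\|f\|_{L^\infty}$ from the proof of Theorem~\ref{thm-xBE-FD} and the choice $\lambda \sim t^{-1}$ reproduce $wBE(\kappa_0)$ verbatim. Via the probabilistic representation of $U_\lambda$, the displayed resolvent bound is equivalent to improved oscillation decay $\mathrm{osc}_{B(x,r)} u \le C(r/R)^{\kappa_0}\,\mathrm{osc}_{B(x,R)}u$ for $u$ harmonic on $B(x,R)$, i.e.\ to raising the Barlow--Bass Hölder exponent from $d_W - d_H$ to $\kappa_0$.

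The mechanism I would exploit is exactly the geometric content of the topological-Hausdorff dimension: the carpet admits a basis of open cells whose topological boundaries have Hausdorff dimension $d_{tH} - 1$, so by Ahlfors regularity the $r$-neighborhood $\partial^*_r E$ of such a boundary has measure comparable to $r^{d_H - (d_{tH}-1)} = r^{d_W - \kappa_0}$. This is precisely the codimension-$(d_W - \kappa_0)$ scaling that should govern the flux a harmonic function can carry across such a thin separating ``wall''. Concretely I would seek sharp capacity and resistance estimates across these walls and feed them into a refined oscillation or coupling argument, bounding $|u(x) - u(y)|$ by the cost of the diffusion traversing the intervening walls. The co-area formula of Section~\ref{S:Co-area formula} and the analysis of $\partial^*_r E$ enter here to convert the wall geometry into the required $r^{d_W - \kappa_0}$ gain, which is the quantitative improvement over the crude volume bound underlying the exponent $d_W - d_H$.

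The crux --- and the reason this remains only a conjecture --- is precisely this last step. Unlike the nested-fractal case of Theorem~\ref{T:nested}, where finite ramification lets one localize all crossings to a finite set of boundary points and read off the sharp exponent, the Sierpinski carpet is infinitely ramified, so one must control the full fractal geometry of the separating walls. The capacity estimates across such walls that would yield the sharp exponent $\kappa_0$ go strictly beyond the Hölder regularity currently available from~\cite{BB89,Ba98,BB99}, and I expect establishing them --- quantifying precisely how $d_{tH}$ improves the modulus of continuity of harmonic functions --- to be the hardest and genuinely open part of the program.
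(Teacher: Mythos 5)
The statement you were asked to prove is a \emph{conjecture}: the paper offers no proof of it, only the surrounding evidence that your write-up correctly reassembles. Your conditional reductions are sound and in fact coincide with the paper's own framing: the lower bound $\alpha_1^*\geq (d_H-d_{tH}+1)/d_W$ via Lemma~\ref{lem:charfunctinBesov} is exactly what the paper records, the upper bound $\alpha_1^\#\leq\beta_1=1-\kappa/d_W$ is Theorem~\ref{estimate kappa alpha}, and the observation that the sharp exponent would follow from improving the Barlow--Bass H\"older estimates for harmonic functions is stated verbatim in the paper's discussion preceding the conjecture. Your scaling checks (the upgraded resolvent bound reducing to \eqref{E:FD_resolvent_estimate} when $d_{tH}=1$, and $\lambda\sim t^{-1}$ reproducing $t^{-\kappa_0/d_W}$) are correct. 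But as a proof the proposal has a genuine gap, which you yourself flag: the endpoint estimate $wBE(\kappa_0)$ with $\kappa_0=d_W-d_H+d_{tH}-1$, equivalently the improved oscillation decay for harmonic functions, is precisely the open content. Your proposed mechanism --- walls of dimension $d_{tH}-1$, hence $r$-neighborhoods of measure $\sim r^{d_W-\kappa_0}$, feeding into capacity/coupling bounds --- is a plausible heuristic, but there is no quantitative bridge from the volume of neighborhoods of separating sets to the modulus of continuity of harmonic functions on an infinitely ramified carpet; this is the step the paper explicitly identifies as requiring new estimates beyond \cite{BB89,Ba98,BB99}. Likewise your claim that the resolvent H\"older bound is \emph{equivalent} to the oscillation decay for harmonic functions is asserted rather than proved; one direction (elliptic-to-parabolic) requires real work in this setting.

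Two further cautions. First, your reduction is strictly stronger than the conjecture: the paper deliberately separates the endpoint statement $wBE(d_W-d_H+d_{tH}-1)$ into Open Question~\ref{question-SC}, asserting in the conjecture only $wBE(\kappa)$ for \emph{some} $\kappa>(d_W-d_H)_+$ together with the equality for $\alpha_1^*$. Since $\beta_1=1-\kappa/d_W$ is decreasing in $\kappa$, forcing $\alpha_1^\#\leq(d_H-d_{tH}+1)/d_W$ via Theorem~\ref{estimate kappa alpha} does require $\kappa\geq\kappa_0$, so your route entails resolving the open question as well; the paper's phrasing (and its mention of ``measurable isoperimetric type arguments'' for future work) suggests the equality for $\alpha_1^*$ might instead be attacked directly, e.g.\ by showing triviality or non-density of $\B^{1,\alpha}(X)$ above the threshold, without the endpoint $wBE$. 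Second, your admissibility check $\kappa_0>0$ invokes $d_W>d_H$, which can fail for \emph{generalized} Sierpinski carpets in higher dimensions --- this is exactly why the conjecture is stated with $(d_W-d_H)_+$ --- so that part of your argument covers the classical carpet but not the full class named in the statement.
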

Note that this conjecture together with Theorem~\ref{estimate kappa alpha} implies that $    \kappa \leqslant  d_W-d_H + d_{tH}-1  $, which makes the following question natural and important. 
\begin{open question}\label{question-SC}
Investigate under which conditions $wBE(d_W-d_H + d_{tH}-1)$    holds true and Assumption~\ref{assumpt:wBEatcrit} is satisfied. 
\end{open question}

\subsection{BV functions in products of nested fractals}
Further  examples 
of spaces to which our  theory applies 
can be constructed by taking products of nested fractals.  
In some sense these are the most interesting class of examples because one should expect that there are subsets with a non-trivial notion of curvature, however here we only discuss the product spaces.
Suppose $X$ is a nested fractal of dimension $d_H$ on which the diffusion has walk dimension $d_W$.  The condition $wBE(d_W-d_H)$ is valid by Theorem~\ref{thm-xBE-FD}.  The $n$-fold product $X^n$ supports a heat kernel obtained by tensoring and discussed in  Section~\ref{subsec-products}, where it was established  that the walk dimension remains $d_W$ on the product and $wBE(d_W-d_H)$ is still true.  All that has changed is that the Hausdorff dimension is now $nd_H$.
\begin{theorem}\label{T:nested product}
If $X$ is a nested fractal, then for every $n\in\mathbb{N}$, the space $BV(X^n)=\B^{1,d_H/d_W}(X^n)$ is dense in $L^1(X^n,\mu^{\otimes n})$ and Assumption~\ref{assumpt:wBEatcrit} is satisfied.
\end{theorem}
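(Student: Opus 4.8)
The plan is to reduce the whole statement to identifying the critical Besov exponent of the product, namely to showing that $\alpha_1^\#(X^n)=\alpha_1^*(X^n)=d_H/d_W$; once this is in hand both assertions follow quickly. Recall from Section~\ref{subsec-products} that $(X^n,d_{X^n},\mu^{\otimes n})$ is Ahlfors $nd_H$-regular, carries the tensorized heat kernel $p_t^{X^n}(\mathbf x,\mathbf y)=\prod_i p_t(x_i,y_i)$ satisfying the sub-Gaussian bounds~\eqref{eq:subGauss-upper} with \emph{the same} walk dimension $d_W$, and, by Theorem~\ref{thm-xBE-FD} combined with Proposition~\ref{tensorization}, satisfies $wBE(d_W-d_H)$. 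Thus the standing assumptions~\ref{A1} and~\ref{sGKHE} hold on $X^n$, and it remains to check that $d_W-d_H$ is the critical weak Bakry-\'Emery parameter.

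For the upper bound I would apply Theorem~\ref{estimate kappa alpha} directly on $X^n$. Since the walk dimension of $X^n$ is still $d_W$ and $X^n$ satisfies $wBE(\kappa)$ with $\kappa=d_W-d_H$, the quantity $\beta_1$ from~\eqref{eq:defnofbetap} equals $1-\kappa/d_W=d_H/d_W$, so the theorem yields $\alpha_1^*(X^n)\le\alpha_1^\#(X^n)\le\beta_1=d_H/d_W$.

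The substance of the proof is the matching density (lower) bound, for which I would exhibit a family of sets generating the product topology whose indicators lie in $\B^{1,d_H/d_W}(X^n)$. The natural choice is products of cells $E=C^{(1)}\times\cdots\times C^{(n)}$, where each $C^{(i)}\subset X$ is a cell with finite boundary as in Theorem~\ref{T:nested}. Such boxes generate the topology of $X^n$ and their indicators are dense in $L^1(X^n,\mu^{\otimes n})$, so by Lemma~\ref{lem:charfunctinBesov} it suffices to prove the neighborhood estimate $\mu^{\otimes n}(\partial^*_rE)\le Cr^{d_H}$, which is precisely its hypothesis with $\alpha d_W=d_H$. To obtain this I would first note that, because $d_{X^n}$ is an $\ell^{d_W/(d_W-1)}$-combination of the coordinate distances, balls in $X^n$ are comparable to products of coordinate balls; hence a point of $\partial^*_rE$ must have at least one coordinate within distance $\sim r$ of the finite set $\partial C^{(i)}$ while the remaining coordinates lie in an $\sim r$-neighborhood of their respective cells. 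Summing the measures of these $n$ ``faces'' and using Ahlfors $d_H$-regularity on $X$ to bound the measure of an $r$-neighborhood of each finite set $\partial C^{(i)}$ by $C r^{d_H}$, the $i$-th face contributes at most $C r^{d_H}\prod_{j\ne i}\bigl(\mu(C^{(j)})+Cr^{d_H}\bigr)$; letting $r\to0^+$ then gives $\mu^{\otimes n}(\partial^*_rE)\le Cr^{d_H}$, the ``corner'' contributions where two or more coordinates are simultaneously near their boundaries being of higher order in $r^{d_H}$ and hence negligible.

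Combining the two bounds gives $\alpha_1^*(X^n)=\alpha_1^\#(X^n)=d_H/d_W$, whence $BV(X^n)=\B^{1,d_H/d_W}(X^n)$ is dense in $L^1(X^n,\mu^{\otimes n})$. Finally, since $X^n$ satisfies $wBE(\kappa)$ with $\kappa=d_W-d_H=d_W(1-\alpha_1^*)$, Remark~\ref{rem-wBE*} shows $\lambda_1^*=\lambda_1^\#$ and that Assumption~\ref{assumpt:wBEatcrit} is satisfied. The main obstacle is the neighborhood estimate of the third paragraph: the apparent difficulty is that the Hausdorff dimension $nd_H$ of $X^n$ far exceeds the target exponent $d_H$, and the point to get right is that the measure-theoretic boundary of a box thickens in only one coordinate at a time (codimension one), so that each codimension-one face contributes only order $r^{d_H}$ while genuinely higher-codimensional corners are of strictly higher order and drop out in the limit.
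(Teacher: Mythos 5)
Your proposal is correct and follows essentially the same route as the paper: upper bound $\alpha_1^\#(X^n)\le d_H/d_W$ from $wBE(d_W-d_H)$ (Theorem~\ref{thm-xBE-FD} plus Proposition~\ref{tensorization}) via Theorem~\ref{estimate kappa alpha}, matching lower bound by verifying $\mu^{\otimes n}(\partial^*_rE)\le Cr^{d_H}$ for boxes of cells and invoking Lemma~\ref{lem:charfunctinBesov}/Corollary~\ref{cor:conditforbesovdensity}, and Assumption~\ref{assumpt:wBEatcrit} via Remark~\ref{rem-wBE*}. The only (harmless) variation is that you bound each face's $r$-neighborhood directly by Fubini and the product structure of $\mu^{\otimes n}$, whereas the paper covers each $(n-1)d_H$-dimensional face by $Cr^{-(n-1)d_H}$ balls and uses Ahlfors $nd_H$-regularity of the product; both yield the same $Cr^{d_H}$ estimate.
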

\begin{proof}
Observe that the collection of sets $E=\prod_1^n E_j$, where each $E_j\subset X$ is a cell, generates the topology.  Moreover the boundary of such a set $E$ is a finite collection of faces of the form $$\Bigl(\prod_1^{k-1} E_j\Bigr)\times\{x_k\}\times \Bigl(\prod_{k+1}^n E_j\Bigr).$$  Each such face has (Hausdorff and, by self-similarity of the nested fractal, Minkowski) dimension $(n-1)d_H$, so there is $C$ such that for each $r>0$ it can be covered by $Cr^{-(n-1)d_H}$ balls of radius $r$; by doubling the radius of each ball we may ensure we cover an $r$-neighborhood of the face.  Each such ball has measure at most $c_2(2r)^{n d_H}$ by Ahlfors regularity, so the total measure involved in covering an $r$-neighborhood of the face is $Cc_2 2^{d_H} r^{d_H}$.  Summing over the finite number of faces we find that $\mu((\partial E)_r)\leq Cr^{d_H}$, so the result follows from Corollary~\ref{cor:conditforbesovdensity}.
It follows that $\alpha_1^*\geq \frac{d_H}{d_W}$, and since we know $wBE(d_W-d_H)$ implies $\alpha_1^*\leq\frac{d_H}{d_W}$ we have $\alpha_1^*=\frac{d_H}{d_W}$ and the weak Bakry-\'Emery condition is valid at the critical exponent, see Example~\ref{example-nested} and Remark~\ref{rem-wBE*}.   
\end{proof}

 

\bibliographystyle{plain}
 \bibliography{BV_Refs}

\end{document}